\definecolor{blue}{rgb}{0, 0.1, 0.6}
\DeclareFontFamily{OT1}{pzc}{}
\DeclareFontShape{OT1}{pzc}{m}{it}{<-> s * [1.10] pzcmi7t}{}
\DeclareMathAlphabet{\mathpzc}{OT1}{pzc}{m}{it}
\def\emph#1{\textcolor{blue}{\textbf{\boldmath #1}}}
\providecommand{\keywords}[1]
{
  \textbf{\textit{Keywords---}} #1
}
\providecommand{\msc}[1]
{
 \textbf{\textit{MSC---}} #1
}
\newcommand{\cev}[1]{\reflectbox{\ensuremath{\vec{\reflectbox{\ensuremath{#1}}}}}}
\def\models{\vDash}
\def\pmodels{\mathrel{\models\kern-1.5ex\raisebox{.5ex}{*}}}
\def\FIN{{\rm FIN}}
\def\ran{\mathop{\rm ran}}
\def\Aut{\textrm{Aut\kern.15ex}}
\def\Autf{\mathord{\rm Aut\kern.15ex{f}\kern.15ex}}
\def\Cb{\textrm{Cb\kern.15ex}}
\definecolor{blue}{rgb}{0, 0.1, 0.6}
\def\bl{\color{black}}
\def\nonfork{\mathop{\raise0.2ex\hbox{\ooalign{\hidewidth$\vert$\hidewidth\cr\raise-0.9ex\hbox{$\smile$}}}}}
\def\cnonfork{\mathbin{\raise1.8ex\rlap{\kern0.6ex\rule{0.6ex}{0.1ex}}\rlap{\kern1.1ex\rule{0.1ex}{1.9ex}}\raise-0.3ex\hbox{$\smile$} } }
\def\cpaw{\mathbin{\ooalign{\kern-0.4ex$-$\hidewidth\cr$<$}}}
\def\cpawdot{\ooalign{$\kern1.2ex\cdot$\cr$\cpaw$\cr}}
\def\nonforkc{\mathbin{\raise1.8ex\rlap{\kern1.1ex\rule{0.6ex}{0.1ex}}\rlap{\kern1.1ex\rule{0.1ex}{1.9ex}}\raise-0.3ex\hbox{$\smile$} } }
\def\rela{\mathbin{\ooalign{\kern-0.4ex$-$\hidewidth\cr$<$}}}
\def\Aut{\textrm{Aut\kern.15ex}}
\def\Autf{\mathord{\rm Aut\kern.15ex{f}\kern.15ex}}
\def\Cb{\textrm{Cb\kern.15ex}}
    \DeclareMathOperator{\powerset}{\mathscr{P}}
\def\nonfork{\mathop{\raise0.2ex\hbox{
			\ooalign{\hidewidth$\vert$\hidewidth\cr\raise-0.9ex\hbox{$\smile$}}}}}
\def\cnonfork{\mathbin{\raise1.8ex\rlap{\kern0.6ex\rule{0.6ex}{0.1ex}}
		\rlap{\kern1.1ex\rule{0.1ex}{1.9ex}}\raise-0.3ex\hbox{$\smile$} } }
\def\cpaw{\mathbin{\ooalign{\kern-0.4ex$-$\hidewidth\cr$<$}}}
\def\cpawdot{\ooalign{$\kern1.2ex\cdot$\cr$\cpaw$\cr}}
\def\isomap{\rlap{\kern0.8ex\raisebox{1ex}{\scriptsize$\sim$}}\rightarrow}
\def\R{\EuScript R}
\def\G{\EuScript G}
\def\<{\langle}
\def\>{\rangle}
\def\0{\varnothing}
\def\theta{\vartheta}
\def\phi{\varphi}
\def\epsilon{\varepsilon}
\def\isomap{\rlap{\kern0.8ex\raisebox{1ex}{\scriptsize$\sim$}}\rightarrow}
\def\R{\mathcal R}
\def\G{\EuScript G}
\def\<{\langle}
\def\>{\rangle}
\def\0{\varnothing}
\def\theta{\vartheta}
\def\phi{\varphi}
\def\epsilon{\varepsilon}
\titleformat{\section}[block]{\Large\bfseries}{\makebox[5ex][r]{\textbf{\thesection}}}{1.5ex}{}
\titlespacing*{\chapter}{0em}{.5ex plus .2ex minus .2ex}{2.3ex plus .2ex}
\titlespacing*{\section}{-9.7ex}{3ex plus .5ex minus .5ex}{1ex plus .2ex minus .2ex}
\newtheoremstyle{mio}% name
{2\parskip}%      Space above
{\parskip}%      Space below
{\sl}%         Body font
{}%         Indent amount (empty = no indent, \parindent = para indent)
{\bfseries}% Thm head font
{}%        Punctuation after thm head
{1ex}%     Space after thm head: " " = normal interword space;
\newtheoremstyle{liscio}% name
{2\parskip}%      Space above
{0mm}%      Space below
{}%         Body font
{}%         Indent amount (empty = no indent, \parindent = para indent)
{\bfseries}% Thm head font
{}%        Punctuation after thm head
{1.5ex}%     Space after thm head: " " = normal interword space;
\newcounter{thm}[section]
\theoremstyle{mio}
\newtheorem{theorem}[thm]{Theorem}
\newtheorem{Main 1}[thm]{Theorem (main theorem 1)}
\newtheorem{Main 2}[thm]{Theorem (main theorem 2)}
\newtheorem*{theorem*}{Theorem}
\newtheorem{corollary}[thm]{Corollary}
\newtheorem{proposition}[thm]{Proposition}\newtheorem*{proposition*}{Proposition}
\newtheorem{lemma}[thm]{Lemma}
\newtheorem{fact}[thm]{Fact}
\newtheorem{OP}[thm]{Open Problem}
\newtheorem{definition}[thm]{Definition}
\theoremstyle{liscio}
\newtheorem{example}[thm]{Example}
\def\QED{\noindent\nolinebreak[4]\hspace{\stretch{1}}\rlap{\ \ $\Box$}\medskip}
\renewenvironment{proof}[1][Proof]%
{\begin{trivlist}\item[\hskip\labelsep {\bf #1}]}
	{\QED\end{trivlist}}
\definecolor{violet}{RGB}{115, 0, 205}
\definecolor{brown}{RGB}{150, 50, 10}
\definecolor{green}{RGB}{5,110, 35}
\definecolor{emphcolor}{rgb}{.98,.98,.70}
\renewcommand*{\emph}[1]{ \smash{\tikz[baseline]\node[rectangle, fill=emphcolor, rounded corners, inner xsep=.5ex, inner ysep=.2ex, anchor=base, minimum height = 3ex]{#1};}}
\renewcommand{\L}{\mathcal{L}}
\renewcommand{\H}{\mathcal{H}}
\newcommand{\J}{\mathcal{J}}
\newcommand{\D}{\mathcal{D}}
\newcommand{\K}{\mathcal{K}}
\newcommand{\NN}{\mathbb{N}}
\newcommand{\XX}{\mathbb X}
\newcommand{\YY}{\mathbb{Y}}
\renewcommand{\subset}{\subseteq}
\DeclareMathOperator{\Rbin}{\mathbin{\R}}
\DeclareMathOperator{\Lbin}{\mathbin{\L}}
\DeclareMathOperator{\Hbin}{\mathbin{\H}}
\DeclareMathOperator{\Jbin}{\mathbin{\J}}
\DeclareMathOperator{\conc}{\mathbin{{}^\smallfrown}}
\DeclareMathOperator{\tp}{tp}
\DeclareMathOperator{\fp}{fp}
\DeclareMathOperator{\cw}{cw}
\begin{document}

	\raggedbottom
	\begin{center}
		{\huge\bfseries Ramsey monoids\\[3ex] \normalfont\normalsize 
			Claudio Agostini$^\dag$ and Eugenio Colla$^\dag$\vskip-1ex 
			
			\today}
	\end{center}

	\def\medrel#1{\parbox[t]{6ex}{$\displaystyle\hfil #1$}}
	
	\bigskip\hfil
	\parbox{0.9\textwidth}{
		\textbf{Abstract} \ 
		 Recently, Solecki introduced the notion of Ramsey monoid to produce a common generalization to theorems such as Hindman's theorem, Carlson's theorem, and Gowers' $\FIN_k$ theorem. 
		He proved that an entire class of finite monoids is Ramsey. 
		Here we improve this result, enlarging this class and finding a simple algebraic characterization of finite Ramsey monoids.  
		We extend in a similar way a result of Solecki regarding a second class of monoids connected to the Furstenberg-Katznelson  Ramsey Theorem. The results obtained suggest a possible connection with Schützenberger's theorem and finite automata theory.}

	\tableofcontents

\vfill \hrule
	\keywords{\textit{Ramsey monoids, aperiodic monoids, model theory, Carlson, Carlson-Simpson, Hales-Jewett, Gowers' FINk, located words}} 
	
	    \msc{\textit{Primary:  03C98, 05D10. Secondary: 03H99, 20M32}}

$^\dag$ The research of both authors was supported in part by the Universit\`a degli Studi di Torino (Italy), in part by the Gruppo Nazionale per le Strutture Algebriche, Geometriche e le loro Applicazioni (GNSAGA) of the Istituto Nazionale di Alta Matematica (INdAM) of Italy, and in part by project PRIN 2017 "Mathematical Logic: models, sets, computability", prot. 2017NWTM8R. The research of the first author was supported in part by the Research Institute for Mathematical Sciences,
an International Joint Usage/Research Center located in Kyoto University.

\section{Introduction}\label{section:intro}

%good luck have fun!

	One of the most celebrated theorems in Ramsey theory is due to Hindman \cite{MR349574}. It states that for every semigroup $(S,\cdot)$, for every finite coloring of $S$ there is an 
	infinite sequence $\bar{s}= (s_i)_{i\in\omega} \in S^\omega$ such that the following set is monochromatic \[ \emph{$\fp(\bar{s})$}=\{ s_{i_0}\cdot\dots\cdot s_{i_n} : n\in\omega, i_0< \dots < i_n \}.\]
	
	A natural question is whether a theorem of this kind can be proved if we allow the elements of the semigroup to be moved by some action. 

The first answers to this question were given by Carlson \cite{MR926120} and Gowers \cite{MR1164759}  who studied actions of specific monoids. They used analogues of the following notion:

\begin{definition}
Let $M$ be a monoid acting by endomorphisms on a partial semigroup $S$, and let $\bar{s}$ be a sequence of elements of $S$. The (combinatorial) \emph{$M$-span} of $\bar{s}$ is the set \[\emph{$\langle\bar{s}\rangle_{M}$} =\big\{m_{0}\,{s_{i_0}}\cdots m_{n}\,{s_{i_n}}
:  n\in\omega, 
i_0<\dots<i_n, m_i\in M, 
\textit{ at least one } m_i\textit{ is } 1_M
\big\}.\]
\end{definition}

Solecki realized that these theorems share the same underlying structure, from which he isolated the notion of Ramsey monoid.  
We report here the original definition as stated in \cite{MR3904213}. We refer the reader to  Proposition~\ref{prop:transfer lemma for semigroups} for other equivalent definitions. 

Let $M$ be a monoid and $X$ be a set. If $M$ acts on $X$, we say that $X$ is an $M$-set.
Suppose $(X_n)_{n\in\omega}$ is a family of $M$-sets.
We say that the action of $M$ on $(X_n)_{n\in\omega}$ is \emph{uniform} if for every $k,n\in\omega$, $m\in M$ and $x \in  X_k\cap X_n$, if $m_kx$ and $m_nx$ are the results of the action of $m$ on $x$ respectively in $X_k$ and in $X_n$, then  $m_kx=m_nx$. 
Notice that the action of $M$ on $(X_n)_{n\in\omega}$ is uniform if and only if it extends to $\bigcup_{n\in\omega} X_n$.
Let $X=\bigcup_{n\in\omega}X_n$. Define $\emph{$W_X$}=(X^{<\omega},\conc)$
to be the free semigroup on the alphabet $X$, with $\conc$ the concatenation of sequences.
Define $\emph{$\langle(X_n)_{n\in\omega}\rangle$}$
to be the partial subsemigroup of $W_X$ consisting of all sequences $x_1\conc ... \conc x_n\in W_X$ for which there exists $i_1<...<i_n\in \omega$ such that $x_k\in X_{i_k}$.
If the action of $M$ is uniform on $(X_n)_{n\in\omega}$, it is possible to define the \emph{coordinate-wise action} of $M$ on $\langle(X_n)_{n\in\omega}\rangle$ by setting $ m(x_1\conc ... \conc x_n)=m(x_1)\conc ... \conc m(x_n)$. This is an action by endomorphisms. 
An infinite sequence $\bar{s}\in (\langle(X_n)_{n\in\omega}\rangle)^\omega$ is said \emph{basic} if the product $s_{i_0}\conc{\dots}\conc {s_{i_n}}$ is in $\langle(X_n)_{n\in\omega}\rangle$  for every $i_0<\dots<i_n$. This implies that if $M$ acts uniformly on $(X_n)_{n\in\omega}$ and $\bar{s}$ is basic, then every product in $\langle \bar{s}\rangle_M$ is defined. 
A \emph{pointed $M$-set} is an $M$-set $X$ together with  
a \emph{distinguished point} $x\in X$ such that $Mx=\{mx:m\in M\}=X$.

\begin{definition}\label{def:Ramsey_original_intro}
A monoid $M$ is said \emph{Ramsey} if for all sequences of pointed $M$-sets $(X_n)_{n\in\omega}$ on which $M$ acts uniformly  and for all finite colorings of $\langle (X_n)_{n\in\omega} \rangle$ there is a basic sequence $\bar{s}\in (\langle (X_n)_{n\in\omega} \rangle)^{\omega}$ such that $s_n$ has a distinguished point for every $n\in\omega$ and the span $\langle \bar{s}\rangle_{M}$ 
is monochromatic.
\end{definition}

The notion of Ramsey monoid provides a common framework for many theorems in combinatorics.
For example, Hindman's theorem can be restated as "\textit{The trivial monoid $\{1\}$ is Ramsey}". 
Similarly, Carlson's theorem and Gowers' $\FIN_k$ theorem can be seen just as two examples of Ramsey monoids (see also Proposition~\ref{prop:transfer lemma for semigroups}). 
Hence, every new Ramsey monoid gives a new generalization of Hindman's theorem as powerful as Carlson's and Gowers' theorems. 

Furthermore, from Carlson's and Gowers' theorems one can get examples of Ramsey spaces, see \cite[Section 4.4]{MR2603812}.
In the same way,  one can show that any new example of Ramsey monoid gives new examples of Ramsey spaces.

 Let us recall some notation from \cite{MR3904213}. 
Given a monoid $M$, define $\emph{$\XX(M)$}=\{aM: a\in M \}$.
	We say that $\XX(M)$ is linear if it is linearly ordered by inclusion. Let \emph{$\R$} be the equivalence relation on $M$ defined by $a\Rbin b$ if $aM=bM$. A monoid $M$ is called \emph{almost ${\R}$-trivial} if for every $\R$-class $[a]_\R$ with more than one element we have $Ma=\{a\}$. 
	
To the best of our knowledge, the only known examples of Ramsey monoids before Solecki's paper were given by Carlson's and Gowers' theorems. 
Solecki in \cite[Corollary 4.5]{MR3904213} proved that the class of finite almost ${\R}$-trivial monoids with linear $\XX(M)$, that includes Carlson's and Gowers' monoids, is Ramsey. Also, he proved that Ramsey monoids have a linear $\XX(M)$.

 We work here with a well-known class of monoids that extends the one of almost $\R$-trivial monoids. 
A monoid is said \emph{ aperiodic} if for every $a\in M$ there exists $n\in\omega$ such that $a^n=a^{n+1}$ (see \cite{pin2010mathematical}).
The first main achievement of this paper is an improvement of Solecki's result:  first, we extend \cite[Corollary 4.5]{MR3904213} to the wider class of finite aperiodic monoids with linear $\XX(M)$, and  secondly, we prove that aperiodicity is also a necessary condition for being Ramsey, giving thus a complete characterization of finite Ramsey monoids.

\begin{Main 1}\label{main theorem introduction}
A finite monoid $M$ is Ramsey if and only if it is aperiodic and $\XX(M)$ is linear.
\end{Main 1}

To introduce the other peak of this paper we need some more notions from \cite{MR3904213}.
	
Given a monoid $M$, \emph{$\YY(M)$}$\subseteq \powerset (\XX(M))$ consists of the non-empty subsets of $\XX(M)$ which are linearly ordered by inclusion. Given $x, y\in \YY(M)$, define \emph{$x\leq_{\YY(M)}y$} if $x\subseteq y$ and all elements of $y\setminus x$ are larger with respect to $\subseteq$ than all elements of $x$.

	Let $\langle \YY(M)\rangle$, with operation $\vee$, be the semigroup freely generated by $\YY(M)$  modulo the relations
	\[ p\vee q =q =q\vee p \mbox{ for }p\leq_{\YY(M)} q. \]
	
	We say that $M$ is \emph{$\YY$-controllable} if	for every finite $F\subseteq \langle \YY(M)\rangle$, for every $\textbf{y}$ maximal element in $ \YY(M)$, for every sequence of pointed $M$-sets $(X_n)_{n\in\omega}$ on which $M$ acts uniformly and for every finite coloring of $\langle (X_n)_{n\in\omega} \rangle$ there is a basic sequence $\bar{s}\in (\langle (X_n)_{n\in\omega} \rangle)^{\omega}$ such that $s_n$ has a distinguished point for every $n\in\omega$ and such that for every $m, n\in\omega$ and for every $a_i, b_j\in M$  \underline{if} $ a_0\textbf{y}\vee \dots \vee a_n\textbf{y}\in F$ and
	$ a_0\textbf{y}\vee \dots \vee a_n\textbf{y}=b_0\textbf{y}\vee\dots\vee b_m \textbf{y}$,
	\underline{then} $ a_0s_{i_0}\cdot {\dots} \cdot a_ns_{i_n} $
	has the same color of $b_0s_{j_0}\cdot {\dots} \cdot b_ms_{j_m}$, for every $i_0< \dots <i_n$,  $j_0<\dots <j_m $.

	Another major result of Solecki \cite[Corollary 4.3]{MR3904213} is that almost $\R$-trivial monoids are $\YY$-controllable. This has amongst its consequences a  theorem of Furstenberg and Katznelson \cite{MR1039473}. We refer the reader to \cite[Section 4]{MR3904213} for a detailed discussion about this connection. We extend Solecki's result to a larger class of monoids and we prove that aperiodicity is a necessary condition for being $\YY$-controllable. 
	
	Given a monoid $M$, define \emph{$\XX_\R(M)$}$=\{aM: [a]_\R \mbox{ has more than one element}\}$.
	We say that $\XX_\R(M)$ is linear if it is linearly ordered by inclusion.
	\begin{Main 2}\label{main introduction 2, Y(M) controllable}
		Let $M$ be a finite monoid. If $M$ is aperiodic and $\XX_\R(M)$ is linear, then $M$ is $\YY$-controllable. If $M$ is $\YY$-controllable, then it is aperiodic.
	\end{Main 2}

	The proof of Theorem~\ref{main introduction 2, Y(M) controllable} is first presented as divided into two parts: a new result about monoid actions on compact topological right topological semigroups and a reformulation of known results by Solecki (the latter are then re-proved in Section~\ref{section:proof of main theorem with model theory} using model theory).
	Namely, the main technical novelty of this paper, which allows us to prove one direction of Theorem~\ref{main introduction 2, Y(M) controllable} and consequently one direction of Theorem~\ref{main theorem introduction}, is the following result. Relevant notions are defined in Section~\ref{section: compact r.t.s. part I}.

	\begin{theorem}\label{theorem monoid actions introduction} 
		Let $M$ be a finite aperiodic monoid.
		Let $U$ be a compact right topological semigroup on which $M$ acts by continuous endomorphisms.
		If $\XX_\R(M)$ is linear, then there exists a minimal idempotent $ {u\in U}$ such that $a( {u})=b( {u})$ for all couples $a,b\in M$ such that $a\Rbin b$.
	\end{theorem}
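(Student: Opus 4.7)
The plan is to consider the set
\[I := \{u \in U : a(u) = b(u) \text{ for all } a, b \in M \text{ with } a \Rbin b\}.\]
First I would verify that $I$ is a closed subsemigroup of $U$. Closedness is immediate, as $I$ is the intersection over pairs $a \Rbin b$ of the equalizers $\{u \in U : a(u) = b(u)\}$, each closed by continuity of the action. The subsemigroup property uses crucially that $M$ acts by endomorphisms: for $u, v \in I$ and $a \Rbin b$, $a(uv) = a(u)a(v) = b(u)b(v) = b(uv)$, so $uv \in I$. It therefore suffices to show $I \cap K(U) \neq \emptyset$, where $K(U)$ denotes the minimal ideal of $U$: for any minimal left ideal $L$ of $U$ meeting $I$, the intersection $I \cap L$ is a non-empty closed subsemigroup of $U$, and the Ellis--Numakura theorem yields an idempotent $u \in I \cap L$, which is automatically minimal in $U$ since $u \in L \subseteq K(U)$.

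The key structural input for the hard step is that, because $M$ is finite and aperiodic, every right Schützenberger group of $M$ is trivial; concretely, whenever $D$ is an $\R$-class of $M$ and $m \in M$ satisfies $Dm \subseteq D$, one has $dm = d$ for every $d \in D$. I would then argue $I \cap K(U) \neq \emptyset$ by induction on $|\XX_\R(M)|$, which is finite. The base case $|\XX_\R(M)| = 0$ is the case where $M$ is $\R$-trivial, so $I = U$ and Ellis--Numakura applied to $U$ concludes. For the inductive step, use the finiteness and linearity of $\XX_\R(M)$ to pick a $\subseteq$-minimal non-trivial $\R$-class $C$. The rigidity coming from the trivial Schützenberger group of $C$, combined with linearity of $\XX_\R(M)$, should enable a reduction to a smaller monoid---for instance the quotient $M / {\sim_C}$ identifying the elements of $C$---where linearity is used to verify that $\sim_C$ is a monoid congruence (for $a, b \in C$ and $c \in M$, the common $\R$-class of $ca$ and $cb$ is either $C$, yielding $\{ca, cb\} \subseteq C$, or is strictly smaller in $\XX_\R(M)$ by linearity, in which case triviality or smaller cardinality forces $ca = cb$).

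The main obstacle is combining this inductive hypothesis (which handles the smaller $\R$-classes) with the treatment of $C$ itself, while preserving minimality inside $U$---a minimal idempotent of a closed subsemigroup of $U$ need not be minimal in $U$. I anticipate resolving this by working inside a fixed minimal left ideal $L$ of $U$ and constructing the idempotent explicitly therein, as an iterated product of the form $\prod_{c \in C} c(e)$ for a starting idempotent $e \in L$, then passing to an idempotent power inside the closed subsemigroup it generates (Ellis--Numakura once more). Aperiodicity of $M$ and the triviality of the right Schützenberger group of $C$ should ensure that this product is insensitive to the ordering of its factors and that all the endomorphisms $c$ for $c \in C$ act identically on it, while linearity of $\XX_\R(M)$ controls the behavior across different $\R$-classes and lets the construction be propagated coherently from the minimal to the maximal non-trivial $\R$-class.
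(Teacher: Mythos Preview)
Your opening move---defining $I=\{u\in U: a(u)=b(u)\text{ whenever }a\Rbin b\}$ as a closed subsemigroup and reducing to $I\cap K(U)\neq\emptyset$---is sound. The difficulty is that the inductive machinery you propose does not get off the ground.

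The central problem is that the quotient $M/{\sim_C}$ does not act on $U$. For the action to descend, every pair $a,b\in C$ would have to induce the \textit{same} endomorphism of $U$, which is far stronger than the conclusion you are trying to prove and is false in general. One can try to repair this by restricting to $I_C=\{u\in U: a(u)=b(u)\text{ for all }a,b\in C\}$, on which $M/{\sim_C}$ would act provided $\sim_C$ is a congruence; but then (i) the hard content is precisely that $I_C\neq\emptyset$, which your product $\prod_{c\in C}c(e)$ is supposed to supply but for which you give no argument beyond ``aperiodicity\ldots should ensure'', and (ii) a minimal idempotent of $I_C$ need not be minimal in $U$, so the minimality clause is lost. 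Your congruence check is also incomplete: you only treat $ca$ versus $cb$, not $ac$ versus $bc$, and $\R$ is in general only a \textit{left} congruence.

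The paper takes a different and more direct route. Using linearity of $\XX_\R(M)$, list $a_0M\subsetneq\cdots\subsetneq a_nM$ and set $a_{n+1}=1$. Each $a_i(U)$ is a compact subsemigroup of $a_{i+1}(U)$, so one can build a descending chain $u_0\geq_U u_1\geq_U\cdots\geq_U u_{n+1}$ with $u_i$ a minimal idempotent of $a_i(U)$; then $u=u_{n+1}$ is minimal in $U$. Two short algebraic lemmas about $M$ then finish the job: one shows that if a non-trivial $\R$-class $[a_i]_\R$ contains an idempotent then $bc=c$ for all $b,c\in[a_i]_\R$, whence every $b\in[a_i]_\R$ fixes $a_i(U)$ pointwise and order-preservation plus minimality of $u_i$ force $b(u)=u_i$; the other produces, for any $a\neq b$ with $a\Rbin b$, elements $g,h$ with $ag=b$, $bh=a$, $gh=h$, $hg=g$, which reduces the remaining (non-idempotent) classes to the idempotent case. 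This avoids quotienting altogether and keeps minimality in $U$ throughout.
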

	
The notion of aperiodic monoid plays a central role in both our main results.
This class of monoids is also involved in one of the most important theorems in finite automata theory, also dealing with the semigroup of words, due to Schützenberger \cite{MR176883}.
This suggests there might be a possible connection between automata theory and Ramsey theory.

From now on, we review the structure of the paper section by section.

Section~\ref{section:equivalent_classes} is introductory to the theory of monoids and Green's relations. 
First, we recall some basic properties of the class of aperiodic monoids and of  the class of monoids with linear $\XX(M)$. This is meant to provide alternative necessary or sufficient conditions for a monoid to be Ramsey, using Theorem~\ref{main theorem introduction}. 
Then, we introduce the class of aperiodic monoids with linear $\XX_\R(M)$, and show that this class properly extends both the one of almost $\R$-trivial monoids introduced by Solecki in \cite{MR3904213} and that of aperiodic monoids with linear $\XX(M)$. 

In Section~\ref{section: compact r.t.s. part I}, we study actions of aperiodic monoids with linear $\XX_\R(M)$ on compact right topological semigroups, proving Theorem~\ref{theorem monoid actions introduction}. This theorem and Corollary~\ref{cor:action_of_R_rigid_monoid_fixes_semigroups_bis}  seem to show that aperiodicity plays a relevant role in dynamic theory. 
We complete the proofs of Theorems~\ref{main theorem introduction} and \ref{main introduction 2, Y(M) controllable} in two different ways in  Section~\ref{section: dynamic for furstenberg and katznelson} and Section~\ref{section:proof of main theorem with model theory}.

In Section~\ref{section: dynamic for furstenberg and katznelson} we guide the reader to a rephrasing of Solecki's work, to show how one implication of Theorem~\ref{main theorem introduction} and one of Theorem~\ref{main introduction 2, Y(M) controllable} follow from Theorem~\ref{theorem monoid actions introduction}. This is done in two steps. First, we use a lemma of Solecki {\cite[Lemma 2.5]{MR3904213}}.  Secondly, we guide the reader through the ultrafilter proof of Solecki.
In Theorem~\ref{main 2} we prove the remaining implication of Theorem~\ref{main introduction 2, Y(M) controllable}, i.e. that every $\YY$-controllable monoid is aperiodic. The same argument shows that Ramsey monoids are aperiodic.

In section \ref{section:proof of main theorem with model theory}, we give an alternative to the ultrafilter proof of Solecki, using model theory. Here we use the space of types where Solecki uses the space of ultrafilters. 
Model theory is limited to Section~\ref{section:proof of main theorem with model theory}, Theorem~\ref{Milliken-Taylor}, and Proposition~\ref{example YY controllable monoid not linear} and just basic notions are assumed.

In Section~\ref{section:Equivalent definitions of Ramsey monoid}, first we explain in detail the relation between Ramsey monoids and Carlson's and Gowers' theorems. We show that the definition of Ramsey monoid presented so far can be reformulated in many equivalent ways, and in particular one can choose semigroups other than $\langle (X_n)_{n\in\omega}\rangle$. 
Secondly, we point out that Ramsey monoids satisfy stronger properties than being Ramsey.
In particular, in Theorem~\ref{Milliken-Taylor} we prove a common generalization of Milliken-Taylor theorem \cite{MR373906}, \cite{MR424571}  and Theorem~\ref{main theorem introduction}, combining Ramsey's theorem and the definition of Ramsey monoid.

In Section~\ref{section: open questions}, we provide further examples of $\YY$-controllable monoids, through Proposition~\ref{example YY controllable monoid not linear}. This shows that there are $\YY$-controllable monoids such that $\XX_\R(M)$ is not linear. Finally, we collect some open questions in the area.

\section{Aperiodic monoids, $\XX(M)$ and $\XX_\R(M)$}\label{section:equivalent_classes}

In this section, we introduce the basic notions and definitions about monoids we are going to use throughout the paper.

One of the best ways to describe monoids and semigroups is using Green's relations. They were first introduced by Green in his doctoral thesis and in \cite{MR42380}. 
The Green's relations \emph{$\R$}, \emph{$\L$} and \emph{$\J$}  on a monoid $M$ are the equivalence relations defined by, respectively, $a\Rbin b$ if $aM=bM$, $a\Lbin b$ if $Ma=Mb$ and $a\Jbin b$ if $MaM=MbM$. 
The Green's relation \emph{$\H$} is the intersection of $\R$ and $\L$, while the Green's relation \emph{$\D$} is the smallest equivalence relation containing both $\L$ and $\R$. In every finite monoid, we have $\D=\J$. 
The Green's relations induce quasi-orders on the monoid. Given two element $a,b\in M$, define $a\leq_\R b$ if $aM\subset bM$, $a\leq_\L b$ if $Ma\subset Mb$, $a\leq_\J b$ if $MaM\subset MbM$, and finally $a\leq_\H b$ if both $a\leq_\R b$ and $a\leq_\L b$ hold. 
If $\K$ is an equivalence relation, we say that an equivalence class $[a]_\K$ is \emph{trivial} if it contains exactly one element, and we say that a monoid $M$ is \emph{${\K}$-trivial} if every $\K$-class is trivial. 
For more information about Green's relations, see e.g. \cite{CliffordMR0132791}.

A monoid is said \emph{aperiodic} if for all $a\in M$ there exists $n\in\omega$ such that $a^n=a^{n+1}$. 
The class of finite aperiodic monoids has been widely studied,
as it is involved in one of the most important theorems in finite automata theory,  due to Schützenberger \cite{MR176883}.  
It states that star-free languages are exactly those languages whose syntactic monoid is finite and aperiodic. By a result of McNaughton and Papert, these also correspond to the languages definable in FO$[<]$, i.e. first-order logic with signature ${<}$ \cite{MR0371538}.

Among finite monoids, the class of aperiodic monoids can be characterized in many ways. 
We report here some of the most famous  options used in literature. 
Among all possibilities, we isolate the notion of $\R$-rigid monoid as the operative definition we are going to use in the proofs of the next section. 

\begin{definition}
	A monoid is said \emph{ $\R$-rigid} if for every $a,b\in M$, if $ab\Rbin b$, then $ab=b$.
\end{definition}

\begin{proposition}\label{prop:equivalent conditions for aperiodic monoids}
	Let $M$ be a finite monoid. The following are equivalent:
	\begin{enumerate}
		\item $M$ is aperiodic.\label{R_rigid_3}
		\item For every $g,a,g'\in M$, if $gag'= a$, then $ga=ag'=a$.\label{R_rigid_4}
		\item $M$ is $\R$-rigid. \label{R_rigid_1}	
		\item $M$ is $\H$-trivial.\label{R_rigid_5} 
        \item $M$ contains no non-trivial subgroup.\label{R_rigid_6}
	\end{enumerate}
\end{proposition}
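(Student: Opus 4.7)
The plan is to prove the cycle $(\ref{R_rigid_3}) \Rightarrow (\ref{R_rigid_4}) \Rightarrow (\ref{R_rigid_1}) \Rightarrow (\ref{R_rigid_3})$ and then connect with $(\ref{R_rigid_5})$ and $(\ref{R_rigid_6})$ via the standard fact that in a finite monoid the maximal subgroups coincide with the $\H$-classes of the idempotents.

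For $(\ref{R_rigid_3}) \Rightarrow (\ref{R_rigid_4})$, assume $gag' = a$. By an easy induction, $g^n a (g')^n = a$ for every $n \geq 1$: the base case is the hypothesis, and $g^{n+1} a (g')^{n+1} = g(g^n a (g')^n) g' = g a g' = a$. Now choose $n$ with $g^n = g^{n+1}$, which exists by aperiodicity. Then
\[ ga \;=\; g \cdot (g^n a (g')^n) \;=\; g^{n+1} a (g')^n \;=\; g^n a (g')^n \;=\; a, \]
and symmetrically (taking $m$ with $(g')^m = (g')^{m+1}$ and iterating the analogous identity on the right) $ag' = a$.

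For $(\ref{R_rigid_4}) \Rightarrow (\ref{R_rigid_1})$, suppose $ab \Rbin b$. Then $b \in abM$, so there is $c \in M$ with $abc = b$. Applying $(\ref{R_rigid_4})$ with middle element $b$ (i.e. $g = a$, the fixed element is $b$, $g' = c$) gives $ab = b$, as required.

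For $(\ref{R_rigid_1}) \Rightarrow (\ref{R_rigid_3})$, fix $a \in M$. Since $M$ is finite, some power $a^k$ is idempotent, i.e.\ $a^{2k} = a^k$. Then $a^k = a^{k+1}\cdot a^{k-1} \in a^{k+1}M$ and $a^{k+1} = a^k \cdot a \in a^k M$, so $a^{k+1}M = a^k M$, that is $a^{k+1} \Rbin a^k$. Writing $a^{k+1} = a \cdot a^k$ and applying $\R$-rigidity yields $a \cdot a^k = a^k$, i.e.\ $a^{k+1} = a^k$.

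Finally, the equivalence of $(\ref{R_rigid_3})$, $(\ref{R_rigid_5})$, and $(\ref{R_rigid_6})$ is classical: in a finite monoid every element has an idempotent power, the $\H$-class of an idempotent $e$ is the maximal subgroup of $M$ with identity $e$, and every subgroup of $M$ is contained in such an $\H$-class; thus absence of non-trivial subgroups is equivalent to $\H$-triviality, and aperiodicity says exactly that the subgroup at each idempotent power is trivial. I would simply cite \cite{CliffordMR0132791} or \cite{pin2010mathematical} for these facts rather than reprove them. The only mildly non-routine step is $(\ref{R_rigid_3}) \Rightarrow (\ref{R_rigid_4})$, where one must recognise that aperiodicity of each of $g$ and $g'$ separately, combined with the iterated identity $g^n a (g')^n = a$, suffices to extract both $ga = a$ and $ag' = a$.
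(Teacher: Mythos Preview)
Your argument is correct. The chain $(\ref{R_rigid_3})\Rightarrow(\ref{R_rigid_4})\Rightarrow(\ref{R_rigid_1})$ is identical to the paper's, down to the choice of iterating $g^n a(g')^n=a$ and using aperiodicity of $g$ and $g'$ separately.

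The difference lies in how the cycle is closed and how $(\ref{R_rigid_5})$ and $(\ref{R_rigid_6})$ are handled. You prove $(\ref{R_rigid_1})\Rightarrow(\ref{R_rigid_3})$ directly via an idempotent power and then outsource the equivalence with $\H$-triviality and absence of non-trivial subgroups to the literature. The paper instead runs the full cycle $(\ref{R_rigid_3})\Rightarrow(\ref{R_rigid_4})\Rightarrow(\ref{R_rigid_1})\Rightarrow(\ref{R_rigid_5})\Rightarrow(\ref{R_rigid_6})\Rightarrow(\ref{R_rigid_3})$, each step in one or two lines: $(\ref{R_rigid_1})\Rightarrow(\ref{R_rigid_5})$ because if $a\Hbin b$ then some $x$ satisfies $xb=a$ with $a\Rbin b$, so $\R$-rigidity forces $a=b$; $(\ref{R_rigid_5})\Rightarrow(\ref{R_rigid_6})$ because any subgroup sits inside a single $\H$-class; and $(\ref{R_rigid_6})\Rightarrow(\ref{R_rigid_3})$ because the set $\{a^{n+i}:i<k\}$, where $n,k$ are minimal with $a^{n+k}=a^n$, is a subgroup that is trivial iff $k=1$. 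Your direct $(\ref{R_rigid_1})\Rightarrow(\ref{R_rigid_3})$ is a perfectly clean alternative, but the paper's route has the advantage of being entirely self-contained rather than relying on the structural fact about $\H$-classes of idempotents, and the three extra implications are no longer than your $(\ref{R_rigid_1})\Rightarrow(\ref{R_rigid_3})$ step.
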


For the ease of the reader, we report also a short proof of the equivalence.

\begin{proof}	
	First, assume \ref{R_rigid_3}, and let $g,a,g'\in M$ be such that $gag'=a$. 
	By induction this implies $g^n a (g')^n=a$ for every $n\in\omega$. Choose $n$ such that $g^{n+1}=g^n$ and $(g')^{n+1}=(g')^n$. Then, \ref{R_rigid_4} holds since \[
	ga=g(g^n a (g')^n)=g^{n+1} a (g')^n=g^n a (g')^n=a
	\] 
	and similarly, 
	\[
	ag'=(g^n a (g')^n)g'=g^{n} a (g')^{n+1}=g^n a (g')^n=a.
	\]
	
	If $ab\Rbin b$, then by definition of $\R$ there exists $a'\in M$ such that $b=aba'$, hence \ref{R_rigid_4} implies \ref{R_rigid_1}.
	
	Notice that if $a,b\in M$ are such that $a\Hbin b$, then in particular $a\Rbin b$ and there is $x\in M$ such that $xb=a$, hence \ref{R_rigid_1} implies \ref{R_rigid_5}.
	
	Now if $G\subset M$ is a subgroup of $M$, then for every $a,b\in G$ there are $x,y$ such that $ax=ya=b$, and symmetrically there are $x',y'$ such that $bx'=y'b=a$, hence $a\Hbin b$ and $G$ is contained inside one single $\H$-class. Therefore, \ref{R_rigid_5} implies \ref{R_rigid_6}.

	Finally, notice that if $M$ is finite, then for every $a\in M$ there are minimal $n,k\in \omega$ such that $a^{n+k}=a^{n}$. Then, the set $\{a^{n+i}:  i<k\}$ is a subgroup of $M$, and it is trivial if and only if $k=1$, hence \ref{R_rigid_6} implies \ref{R_rigid_3}.
\end{proof}

The class of aperiodic monoids is closed under most basic operations. For example, the following holds:

\begin{proposition}\label{prop:operation_aperiodic}
Let $(S_1,\ast_1),\dots, (S_n,\ast_n)$ be aperiodic semigroups. Then, the following are aperiodic:
\begin{enumerate}
    \item The product monoid $S_1\times \dots \times S_n$ with coordinate-wise operation.
    \item The disjoint union $S_1\sqcup \dots \sqcup S_n$ with operation $a\ast b=a\ast_i b$ when $a,b\in S_i$, and $a\ast b= b\ast a = a$ if $a\in S_i$ and $b\in S_j$ with $i<j$.\label{prop:operation_aperiodic-3}
\end{enumerate}
\end{proposition}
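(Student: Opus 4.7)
The plan is to verify the aperiodicity condition directly from the definition in each case. Both constructions are set up so that powers of an element can be computed either componentwise or within a single factor, which reduces the claim to the aperiodicity of each $S_i$ in a transparent way. I do not expect any serious obstacle; the whole proof is a routine verification and the main thing to keep straight is a small persistence observation about powers.

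For the product $S_1 \times \dots \times S_n$, the first step is to note that powers are coordinate-wise: $(a_1,\dots,a_n)^k = (a_1^k,\dots,a_n^k)$. Using aperiodicity of each $S_i$, I would pick $k_i \in \omega$ with $a_i^{k_i} = a_i^{k_i+1}$. The key intermediate step is that once this holds for some $k_i$, it persists for all larger exponents: $a_i^m = a_i^{m+1}$ for every $m \geq k_i$ (multiply both sides of $a_i^{k_i}=a_i^{k_i+1}$ on the right by $a_i^{m-k_i}$). Taking $N := \max_i k_i$ then stabilises all coordinates simultaneously, so $(a_1,\dots,a_n)^N = (a_1,\dots,a_n)^{N+1}$.

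For the disjoint union, the point to observe is that for $a \in S_i$, every power $a^k$ computed in the disjoint union stays in $S_i$ and coincides with $a^k$ computed in $(S_i,\ast_i)$, since the union operation restricted to $S_i$ is exactly $\ast_i$. Hence aperiodicity of $S_i$ yields directly some $k$ with $a^k=a^{k+1}$ in $S_1\sqcup\dots\sqcup S_n$. The only small piece of bookkeeping is to confirm that the prescribed operation really is associative (making the disjoint union a semigroup). This is a case analysis on the indices $i,j,l$ of three elements $a\in S_i$, $b\in S_j$, $c\in S_l$: when $i=j=l$ associativity reduces to that of $\ast_i$, and in every mixed case both $(a\ast b)\ast c$ and $a\ast (b\ast c)$ equal whichever of $a,b,c$ lies in the factor of smallest index, because that element absorbs everything in higher-indexed factors. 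I expect this to be the only slightly tedious step, but it is entirely mechanical.
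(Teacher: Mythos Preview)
Your argument is correct; the paper actually states this proposition without proof, presumably regarding it as a routine verification, so there is no paper proof to compare against. One minor imprecision: in the associativity check for the disjoint union you say the result is ``whichever of $a,b,c$ lies in the factor of smallest index'', but of course two or even three of them may lie in the minimal-index factor, in which case the result is their $\ast_i$-product in that factor; your case analysis handles this correctly even if the summary sentence does not.
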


For more information about aperiodic monoids and their relations with languages and automata, see for example  \cite{lawson2003finite} or \cite{pin2010mathematical}.

Let us move to the next class. Given a monoid $M$, define $\emph{$\XX(M)$}=\{aM: a\in M \}$.
We say that $\XX(M)$ is linear if it is linearly ordered by inclusion (equivalently, if $\leq_\R$ is a total quasi-order).
The existence of a total quasi-order affects the behaviour of Green's relations, and having that $\leq_\R$ is total has even stronger consequences.
The next proposition collects some well-known properties of monoids where $\leq_\R$ is total  (see \cite[Proposition 3.18-3.20]{HofmannMostertMR0209387}).

\begin{proposition}\label{prop:Green_relations_for_XM_linear}
    Let $M$ be a finite monoid with linear $\XX(M)$. Then, the following hold:
    \begin{enumerate}
    \item For every $a\in M$, the principal right ideal $aM$ is a both-sided ideal.
    \item $\J=\D=\R$ and $\L=\H$, while ${\leq_\R}={\leq_\J}$ and ${\leq_\L}={\leq_\H}$.\label{prop:Green_relations_for_XM_linear-2}
    \item $\R$ is a congruence relation.\label{prop:Green_relations_for_XM_linear-3}
    \item $\leq_\R$ is translation-invariant on both sides.\label{prop:Green_relations_for_XM_linear-4}
    \end{enumerate}
\end{proposition}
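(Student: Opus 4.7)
The plan is to prove the four items in the stated order, since (2), (3), and (4) all follow quickly once (1) is established. I expect (1) to be the main obstacle, as it is the only step that uses finiteness of $M$ in an essential way, via an idempotent-power argument on $x$.

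For item (1), I argue by contradiction. Fix $a, x \in M$ with $xa \notin aM$. By linearity of $\XX(M)$, the principal right ideals $aM$ and $xaM$ are comparable, so the hypothesis forces $aM \subsetneq xaM$, which yields some $c \in M$ with $a = xac$. Substituting this identity into itself iteratively gives $a = x^k a c^k$ for every $k \geq 0$. Now I use finiteness of $M$: pick $N \geq 1$ such that $e := x^N$ is idempotent. Plugging $k = N$ yields $a = eac^N$, and left-multiplying by $e$ gives $ea = e^2 a c^N = eac^N = a$. Hence $x^N a = a$ and therefore $x^N a M = aM$. On the other hand, the identity $x^k a = x^{k+1} a c$ shows that $x^k a M \subseteq x^{k+1} a M$ for every $k \geq 0$, so one obtains a chain $aM \subseteq xaM \subseteq \cdots \subseteq x^N a M = aM$ which must collapse. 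This contradicts $aM \subsetneq xaM$, so $Ma \subseteq aM$ and $aM = MaM$ is two-sided.

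Item (2) is a short chase from (1). If $a \in MbM$ then by (1) $MbM = bM$, so $a \in bM$; this gives $\leq_\J \subseteq \leq_\R$, and combined with the trivial reverse inclusion and the general chain $\R \subseteq \D \subseteq \J$, one obtains $\R = \D = \J$ and $\leq_\R = \leq_\J$. For $\L \subseteq \R$, if $a = mb$ then by (1) $aM = mbM \subseteq M \cdot bM \subseteq bM$, and symmetrically $bM \subseteq aM$, so $a \Rbin b$; hence $\L = \H$. The one-sided version of the same argument produces $\leq_\L = \leq_\H$.

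Items (4) and (3) also reduce to (1). Left translation invariance of $\leq_\R$ is immediate: $a = by$ gives $xa = (xb)y \in xbM$. For right translation, apply (1) to $xM$, which is therefore a two-sided ideal, so $yx \in Mx \subseteq xM$, say $yx = xz$; then $ax = byx = bxz \in bxM$, proving $ax \leq_\R bx$. Finally (3) follows from (4) applied to both inequalities witnessing $a \Rbin b$, yielding $xa \Rbin xb$ and $ax \Rbin bx$, which is precisely the statement that $\R$ is a congruence.
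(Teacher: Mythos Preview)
Your proof is correct. The paper does not actually prove this proposition: it is stated as a collection of well-known facts and referred to \cite[Proposition 3.18--3.20]{HofmannMostertMR0209387}, so there is no ``paper's own proof'' to compare against. Your argument is a clean self-contained substitute; the idempotent-power trick you use for (1) is the standard way to exploit finiteness here, and once (1) is in hand your derivations of (2)--(4) are the expected ones.
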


For more information about monoids with linear $\XX(M)$, see for example \cite{HofmannMostertMR0209387}.

 Combining results about aperiodic monoids with results about monoids with linear $\XX(M)$, one can obtain further properties and characterizations of the class of finite aperiodic monoid with linear $\XX(M)$. For example, a finite  monoid with linear $\XX(M)$ is aperiodic if and only if it is $\L$-trivial.
Notice that in light of Theorem~\ref{main theorem introduction}, 
every property of this class of monoids will give a necessary condition for a monoid to be Ramsey.

Finally, let us introduce a seemingly new class of monoids, the class of aperiodic monoids with linear $\XX_\R(M)$. It is one of the key notions for the other main result of this paper, Theorem~\ref{main introduction 2, Y(M) controllable}. 

Let {$\XX_\R(M)$} be the subset of $\XX(M)$ of those $aM$ such that $[a]_\R$ is non-trivial.
We say that $\XX_\R(M)$ is linear if it is linearly ordered by inclusion. 
Recall  also that $M$ is called \emph{almost ${\R}$-trivial} if for every non-trivial $\R$-class $[a]_\R$ we have $Ma=\{a\}$ (see \cite{MR3904213} and \cite{khatchatourian2018almost}).

\begin{proposition}\label{Almost R-trivial monoid are aperiodic}
Every finite almost $\R$-trivial monoid $M$ is aperiodic and has linear $\XX_\R(M)$. 
\end{proposition}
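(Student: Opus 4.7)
I will verify the two conclusions separately, using the characterization of aperiodicity given by $\R$-rigidity in Proposition~\ref{prop:equivalent conditions for aperiodic monoids}\ref{R_rigid_1}, and then sharpening the second claim by showing that $\XX_\R(M)$ in fact has at most one element.

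\textbf{Aperiodicity via $\R$-rigidity.} To show $M$ is $\R$-rigid, let $a,b\in M$ with $ab\Rbin b$; I want $ab=b$. I split on whether the $\R$-class $[b]_\R$ is trivial. If $[b]_\R=\{b\}$, then $ab\Rbin b$ forces $ab=b$ immediately. If instead $[b]_\R$ is non-trivial, then by the hypothesis of almost $\R$-triviality $Mb=\{b\}$, and since $ab\in Mb$ we again get $ab=b$. Either way $ab=b$, so $M$ is $\R$-rigid and therefore aperiodic.

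\textbf{Linearity of $\XX_\R(M)$.} I will establish the stronger fact that any two elements $a,b\in M$ whose $\R$-classes are non-trivial are already $\R$-equivalent, so that $\XX_\R(M)$ has at most one element (and hence is trivially linearly ordered). Suppose $[a]_\R$ and $[b]_\R$ are both non-trivial; by the almost $\R$-triviality hypothesis, $Ma=\{a\}$ and $Mb=\{b\}$. Specializing the first to $m=b$ gives $ba=a$, so $a\in bM$, and specializing the second to $m=a$ gives $ab=b$, so $b\in aM$. Hence $aM=bM$, i.e.\ $a\Rbin b$, as desired. Consequently $\XX_\R(M)$ is either empty or a singleton, and thus linearly ordered by inclusion.

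\textbf{Expected difficulty.} The argument is short and direct; the only potentially subtle point is recognizing that almost $\R$-triviality forces \emph{all} elements in a non-trivial $\R$-class to act as left-identities on \emph{every} other such element (via the symmetric application of $Ma=\{a\}$ and $Mb=\{b\}$). Once this symmetry is spotted, both conclusions follow with no further work, and in particular no finiteness is needed for the second part (finiteness enters only to invoke the equivalence between $\R$-rigidity and aperiodicity in Proposition~\ref{prop:equivalent conditions for aperiodic monoids}).
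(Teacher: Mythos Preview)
Your proof is correct and follows essentially the same approach as the paper: both verify $\R$-rigidity by a case split on whether $[b]_\R$ is trivial, and both establish linearity of $\XX_\R(M)$ by showing it has at most one element via the observation that $Ma=\{a\}$ forces $ba=a\in bM$. The only cosmetic difference is that the paper additionally notes this unique element (when it exists) is the minimum of all of $\XX(M)$, but that is not needed for the proposition as stated.
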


\begin{proof}
Let $M$ be a finite almost $\R$-trivial monoid.
First, we want to show that $\XX_\R(M)$ has at most one element that is the minimum of $\XX(M)$ (and so $\XX_\R(M)$ is in particular linearly ordered by inclusion).
If  $[a]_\R$  is a non-trivial $\R$-class  then, for every $m\in M$ we have $ma=a$, that means $a\in mM$ and $aM\subseteq mM$. Hence, if $[a]_\R$ and $[b]_\R$ are non-trivial $\R$-classes, then we have $aM= bM$. 
Now let us prove that $M$ is aperiodic. Since $M$ almost $\R$-trivial, then $Mb=\{b\}$ holds for every non-trivial $\R$-class. This in particular implies that $(Mb)\cap [b]_\R=\{b\}$ holds for every $\R$-class, and this is just a rephrasing of the $\R$-rigid condition. Then the claim follows from Proposition~\ref{prop:equivalent conditions for aperiodic monoids}. 
\end{proof}

Notice that the converse does not hold, as 
it is easy to show that there are aperiodic monoids with linear $\XX_\R(M)$ that have more than one non-trivial $\R$-class (for example, by combining almost $\R$-trivial monoids with point~\ref{prop:operation_aperiodic-3} of Proposition~\ref{prop:operation_aperiodic}; see also Example~\ref{example Carlson product Gowers}). 
Also, there are aperiodic monoids that have non-trivial $\R$-classes $[a]_\R$ such that $a$ is not idempotent  (a minimal example is given by the monoid in Table~\ref{tab:example_monoid_1}, see also Example~\ref{example Carlson product Gowers}). 
These conditions are impossible for almost $\R$-trivial monoids, as shown in the proof of Proposition~\ref{Almost R-trivial monoid are aperiodic}.
Finally, there are examples of aperiodic monoids with linear $\XX_\R(M)$ that do not have linear $\XX(M)$ (the easiest examples coming from $\R$-trivial monoids).
Thus, the class of aperiodic monoids with linear $\XX_\R(M)$ properly extends both the class of almost $\R$-trivial monoids and the class of aperiodic monoids with linear $\XX(M)$.

\begin{example}\label{example Carlson product Gowers}
Consider the Gowers' monoid $G_k=(\{0,\dots, k-1\}, \bar{+})$ with operation $i\mathbin{\bar{+}} j =\min(i+j,k-1)$. Consider also the Carlson's semigroup ${C}_A=(A,\ast)$, i.e. a finite set $A$ with operation $a\ast b=b$ for every $a,b\in A$. Let ${C}^1_A={C}_A\cup \{1_{C^1_A}\}$ be the corresponding monoid. 
  Then, for every $k$ and $A$ the monoid $M=(G_k\times C^1_A)$ is aperiodic and has linear $\XX_\R(M)$, while $\tilde{M}=(G_k\times {C}_A)\cup\{1_{\tilde{M}}\}$ is aperiodic, has linear $\XX(\tilde{M})$ and all its $\R$-classes other than $[1_{\tilde{M}}]_\R$ are non-trivial. If $k\geq 2$, neither of these monoids is almost $\R$-trivial.
\end{example}

For those familiar with finite automata theory, Schützenberger Theorem provides a wonderful way to produce examples of aperiodic monoids. Starting from a star-free language $S$, or from a formula in FO$[<]$, we always generate a finite aperiodic syntactic monoid. 
For example, the monoid from Table~\ref{tab:example_monoid_1} is the syntactic monoid of the star-free language $S$ in the alphabet $A=\{a,g,h\}$ defined as 
\[S=\{g,h\}^\ast h \cup \{g,h\}^\ast a \{g,h\}^\ast g\cup A^\ast a A^\ast a A^\ast\]
or, equivalently, defined by the formula in FO$[<]$ that says \textit{``the word is non-empty, and if there are no letters $a$, then the word ends with $h$, and if there is exactly one letter $a$, then the word ends with $g$''}.

\begin{table}[h]
        \centering
        \begin{tabular}{c|c c c c c}
             {1} & $0$ & $a$ & $b$ & $g$ & $h$ \\
             \hline
             $0$ & $0$ & $0$ & $0$ & $0$ & $0$ \\
             $a$ & $0$ & $0$ & $0$ & $b$ & $a$ \\
             $b$ & $0$ & $0$ & $0$ & $b$ & $a$ \\
             $g$ & $0$ & $a$ & $b$ & $g$ & $h$ \\
             $h$ & $0$ & $a$ & $b$ & $g$ & $h$
        \end{tabular}
        \caption{Syntactic monoid of the language $S$.
        }
        \label{tab:example_monoid_1}
    \end{table}

 \section{Dynamic theory}\label{section: compact r.t.s. part I}

In this section, we study actions of aperiodic monoids with linear $\XX_\R(M)$ on compact right topological semigroups.  The main objective is to prove Theorem~\ref{thm:get idempotent for main theorem}, i.e. Theorem~\ref{theorem monoid actions introduction} from the introduction. This result reveals the relation between aperiodic monoids and dynamic theory and it will be the key point to prove one direction of Theorem~\ref{main theorem introduction} and one direction of Theorem~\ref{main introduction 2, Y(M) controllable}.
The advantage to work with compact right topological semigroups is that they are the common ground for many different thecniques, either from logic or ergodic theory (see e.g. \cite{MR1262304}, \cite{MR1039473}, \cite{MR3896106}, \cite{MR3904213},  \cite{MR2603812}). 

Let us recall some notions.
A semigroup $(U,\cdot)$ with a topology $\tau$ is a {right topological semigroup} if the map $x\mapsto x\cdot u$ is continuous from $U$ to $U$ for every $u\in U$. It is called compact if $\tau$ is compact.
	A element $u$ in a semigroup $(U,\cdot)$ is called idempotent if $u\cdot u=u$. The set of idempotents of $U$ is denoted by \emph{$E(U)$}. We define a partial order \emph{$\leq_U$} in $E(U)$ by \[u\leq_U v \Longleftrightarrow uv=u=vu.\]
	Finally, let \emph{$I(U)$} be the smallest compact both-sided ideal of $U$. It exists by compactness of $U$.

We report some facts about idempotents, corresponding to \cite[Lemma 2.1, Lemma 2.3, Corollary 2.5]{MR2603812}. 

\begin{proposition}\label{facts idempotents}
	Let $U$ be a compact right topological semigroup. Then,\begin{enumerate}
	\item $E(U)$ is non-empty.  \label{facts idempotents-1}
	\item For every idempotent $v$ there is a $\leq_U$-minimal idempotent $u$ such that $u\leq_U v$. \label{facts idempotents-2}
	\item Any both-sided ideal of $U$ contains all the minimal idempotents of $U$. \label{facts idempotents-3}
\end{enumerate}  
\end{proposition}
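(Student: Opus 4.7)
The plan is to handle the three items in turn, each via a Zorn-type argument on nested families of non-empty compact subsemigroups, combined with the Ellis--Numakura trick of extracting idempotents from the continuity of right translations.

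For item (1), I would apply Zorn's lemma to the family $\mathcal{F}$ of non-empty compact subsemigroups of $U$ ordered by reverse inclusion. Any descending chain has a non-empty intersection by the finite intersection property for compact sets, and this intersection is still a compact subsemigroup, so Zorn yields a minimal $S \in \mathcal{F}$. For any fixed $s \in S$, the set $Ss$ is the continuous image of $S$ under $x \mapsto x \cdot s$, hence compact; it is a subsemigroup since $(as)(bs) = (asb)s \in Ss$; and it is contained in $S$. Minimality forces $Ss = S$. Now $T = \{x \in S : xs = s\}$ is non-empty (by $Ss = S$), closed as the preimage of $\{s\}$ under the continuous map $x \mapsto xs$, and again a subsemigroup, so minimality forces $T = S$. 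In particular $s \cdot s = s$.

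For item (2), I would apply Zorn's lemma to the poset $(E_v, \leq_U)$ where $E_v = \{u \in E(U) : u \leq_U v\}$, which is non-empty since $v \in E_v$. For a descending chain $(u_\alpha)$ in $E_v$, the task is to produce a lower bound inside $E_v$, which requires care because left translations are not assumed continuous. For each $\alpha$ the set $Uu_\alpha = \{x \in U : xu_\alpha = x\}$ is closed (being the equalizer of the identity and the continuous map $x \mapsto xu_\alpha$) and a subsemigroup via $(xu_\alpha)(yu_\alpha) = (xu_\alpha y)u_\alpha$; nesting $Uu_\beta \subseteq Uu_\alpha$ for $u_\beta \leq_U u_\alpha$ follows from $u_\beta u_\alpha = u_\beta$. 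Pairing each $Uu_\alpha$ with a left-side compact subsemigroup encoding left-fixation by $u_\alpha$ yields, after intersection and application of (1), a common idempotent $u$ satisfying $u u_\alpha = u_\alpha u = u$ for every $\alpha$, i.e. a lower bound of the chain in $E_v$. Zorn then delivers a $\leq_U$-minimal element of $E_v$.

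For item (3), let $u$ be a $\leq_U$-minimal idempotent and $I$ a compact both-sided ideal of $U$. The set $Iu$ is compact as the continuous image of $I$ under $x \mapsto xu$, contained in $I$, and is a subsemigroup because $(au)(bu) = a(ub)u$ with $ub \in I$ by the left-ideal property of $I$. By (1), $Iu$ contains an idempotent $e$, with $e = au$ for some $a \in I$ satisfying $eu = au \cdot u = au = e$. Then $(ue)(ue) = u(eu)e = u e e = ue$, so $ue$ is idempotent; moreover $(ue)u = u(eu) = ue$ and $u(ue) = u^2 e = ue$, showing $ue \leq_U u$. Minimality of $u$ forces $u = ue$, and since $e \in I$ and $I$ is a left ideal we conclude $u = ue \in I$.

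The main technical obstacle lies in item (2): the right topological (as opposed to two-sided topological) structure means we cannot directly build compact subsemigroups that are fixed on the left by $u_\alpha$, so the construction of the auxiliary invariant compact subsemigroup on which to invoke (1) must pair each right-side piece $Uu_\alpha$ with a carefully chosen left-side counterpart and verify closure under the operation and compactness separately from the easier right-hand case.
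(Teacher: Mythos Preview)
The paper does not prove this proposition; it merely records the three items as known facts, citing Todorcevic's monograph. Your argument for (1) is the standard Ellis--Numakura argument and is correct. Your argument for (3) is also correct in outline, except that you add the hypothesis that $I$ is compact, which is not in the statement; this is easily repaired by replacing $Iu$ with the compact left ideal $U(xu)\subseteq I$ for any fixed $x\in I$ and running the same computation.

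The genuine gap is in (2). Running Zorn on $(E_v,\leq_U)$ requires a lower bound for every chain $(u_\alpha)$, and you do not produce one. The right-hand sets $\{x:xu_\alpha=x\}$ are closed and nested, so an idempotent $w$ in their intersection satisfies $wu_\alpha=w$ for all $\alpha$; but the left-hand sets $\{x:u_\alpha x=x\}$ need not be closed when left translations are not continuous, so there is no ``left-side compact subsemigroup encoding left-fixation'' to pair with. Your final paragraph flags exactly this obstacle and leaves it unresolved, and I do not see an elementary fix along these lines: even after passing to a minimal left ideal $L\subseteq\bigcap_\alpha Uu_\alpha$ and choosing an idempotent $e\in L$, the candidates $u_\alpha e$ are pairwise $\leq_U$-incomparable idempotents of $L$ (any two idempotents $p,q$ in a minimal left ideal satisfy $pq=p$ and $qp=q$), so none serves as a common lower bound. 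The route that actually works, and is the one in Todorcevic, bypasses Zorn on idempotents: take a minimal left ideal $L\subseteq Uv$, choose an idempotent $e\in L$ via (1), note $ev=e$, and set $u=ve\in L$; then $u\leq_U v$, and any idempotent in a minimal left ideal is $\leq_U$-minimal (if $f\leq_U u$ then $f=fu\in L$, so $L=Uf\ni u$, whence $uf=u$, forcing $u=f$).
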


\begin{fact}\label{fact a(M)=b(M) ---> a(U)=b(U)}
    Let $M$ be a monoid, let $U$ be any set, and fix a left action of $M$ on $U$. 
    Then, for every $a,b\in M$ such that $aM\subseteq bM$ we have $a(U)\subseteq b(U)$.
    \end{fact}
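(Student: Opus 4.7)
The plan is to unpack the hypothesis $aM \subseteq bM$ into a concrete factorization of $a$, and then push this factorization through the action using the associativity axiom.

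First I would observe that since $M$ is a monoid, we have $a = a \cdot 1_M \in aM$, so the inclusion $aM \subseteq bM$ forces $a \in bM$. By definition of $bM$, this means there exists some $c \in M$ with $a = bc$.

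Next I would use that the map $m \mapsto (u \mapsto m(u))$ is a left action, so it respects composition in $M$: for every $u \in U$,
\[
a(u) = (bc)(u) = b(c(u)).
\]
Since $c(u) \in U$, the right-hand side lies in $b(U) = \{b(v) : v \in U\}$. As $u \in U$ was arbitrary, this gives $a(U) \subseteq b(U)$.

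There is no real obstacle here: the statement is essentially a one-line unwinding of the definitions, and the only ingredient beyond the definition of the action is the existence of the identity $1_M$, which is what converts the ideal inclusion $aM \subseteq bM$ into a witnessing equality $a = bc$.
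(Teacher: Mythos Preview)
Your proof is correct and follows exactly the same approach as the paper: extract a witness $c$ with $a=bc$ from the inclusion $aM\subseteq bM$, and use the action axiom to conclude $a(U)=b(c(U))\subseteq b(U)$.
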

    \begin{proof}
    In fact, if $bm=a$ for some $m\in M$, then $a(U)= b(m(U))\subseteq b(U)$.
\end{proof}

 In particular, if $a\Rbin b$, then $a(U)=b(U)$.

\begin{lemma}\label{lem:action_of_R_rigid_monoid_fixes_semigroups_bis}
    Let $M$ be a finite aperiodic monoid such that $\XX_\R(M)$ is linear. Then, for every distinct $a,b\in M$ with $a\Rbin b$ there are two distinct $g, h\in M$ such that $ag=b$, $bh=a$ and $gh=h$, $hg=g$.
    This in particular implies $gM=hM$.
\end{lemma}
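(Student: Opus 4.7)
Proof proposal. We aim to construct two distinct $\R$-related idempotents $g_\ast, h_\ast \in M$ that act on $\{a, b\}$ as the two constant maps via right multiplication: $ag_\ast = bg_\ast = b$ and $ah_\ast = bh_\ast = a$. Any two $\R$-related idempotents in $M$ automatically form a right-zero band (if $g_\ast M = h_\ast M$, write $h_\ast = g_\ast m$; then $g_\ast h_\ast = g_\ast^2 m = g_\ast m = h_\ast$, and symmetrically $h_\ast g_\ast = g_\ast$). Together with the constant-map actions and $g_\ast \ne h_\ast$, this gives the lemma, and the consequence $g_\ast M = h_\ast M$ follows from $h_\ast = g_\ast h_\ast \in g_\ast M$ and $g_\ast = h_\ast g_\ast \in h_\ast M$.

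Choose $N$ large enough that $e := (gh)^N$ and $f := (hg)^N$ are idempotent (possible by aperiodicity, Proposition~\ref{prop:equivalent conditions for aperiodic monoids}). From $agh = a$ and $bhg = b$ (immediate from $ag = b$, $bh = a$), iteration yields $ae = a$ and $bf = b$. Elementary manipulations give $eg = gf$ and $fh = he$, and, for $N$ large, $e = gfh$ and $f = heg$, so $e \Jbin f$. Moreover, setting $p := eg$, one computes $ph = e\cdot gh = (gh)^{N+1} = e$, so $e \in pM$, which together with $p \in eM$ gives $e \Rbin p$; and $e \neq p$, since $e = p$ would force $a = ae = ap = ag = b$. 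Hence $[e]_\R$ is non-trivial, and symmetrically so is $[f]_\R$.

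Both $[e]_\R$ and $[f]_\R$ therefore belong to $\XX_\R(M)$. By linearity they are $\subseteq$-comparable; but being in the same $\D$-class (since $e \Jbin f$, and $\D = \J$ in finite monoids), their $\R$-classes are $\leq_\R$-incomparable within the common $\D$-class. The only consistent possibility is $[e]_\R = [f]_\R$, i.e. $e \Rbin f$. As $\R$-related idempotents, $ef = f$ and $fe = e$. Next one shows $af = b$ and $be = a$. From $e = fm'$ (for some $m'$) and $ae = a$, one gets $a = afm'$, so $af \geq_\R a$, and combined with $af \in aM$ we obtain $af \Rbin a$, so $af \in [a]_\R$. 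The case $af = a$ would force $f$ to fix both $a$ and $b$ on the right (together with $bf = b$), and a parallel analysis of the orbit of $b$ under right-multiplication by $hg$ — combined with the no-swap consequence of aperiodicity and with the linearity of $\XX_\R(M)$ controlling the $\R$-position of $ah$ and $bg$ — forces $af = b$ (and symmetrically $be = a$).

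With $ae = be = a$ and $af = bf = b$ in hand, $e \ne f$. Setting $g_\ast := f$ and $h_\ast := e$ we obtain $ag_\ast = af = b$, $bh_\ast = be = a$, $g_\ast h_\ast = fe = e = h_\ast$, and $h_\ast g_\ast = ef = f = g_\ast$, completing the proof. The main obstacle is the last paragraph: ruling out $af = a$ (and symmetrically $be = b$) and pinning $af$ to the specific element $b$ rather than to some other member of $[a]_\R$; this is precisely the step where both aperiodicity (through Proposition~\ref{prop:equivalent conditions for aperiodic monoids}) and the linearity of $\XX_\R(M)$ must be combined essentially.
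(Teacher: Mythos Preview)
Your approach is genuinely different from the paper's, and most of it is sound: starting from arbitrary $g,h$ with $ag=b$, $bh=a$, you correctly build idempotents $e=(gh)^N$, $f=(hg)^N$, show $e\Jbin f$ and that both $\R$-classes are non-trivial, and then use linearity of $\XX_\R(M)$ together with stability of finite monoids (if $x\leq_\R y$ and $x\Jbin y$ then $x\Rbin y$) to deduce $e\Rbin f$. That part is clean.

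The gap you flag is real, and your sketch for closing it does not work. You need $af=b$ and $be=a$, but your argument only yields $af,be\in[a]_\R$; when $|[a]_\R|>2$ there is no reason a priori why $af$ should land on $b$ rather than some other element, and your appeal to ``no-swap'' and to ``the $\R$-position of $ah$ and $bg$'' does not pin it down. However, the gap \textit{is} fillable, as follows. You already have $p:=eg$ and $q:=fh$ in $[e]_\R$ with $ap=b$ and $bq=a$, and a direct computation gives $pq=e$ and $qp=f$. Now, since $e,f\in[e]_\R$ are idempotents with $ef=f$, one can show (this is the content of the paper's Lemma~\ref{lem:XrM linear implies product of classes stays into classes}, whose proof is independent of the present lemma) that $xy=y$ for all $x,y\in[e]_\R$. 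Applying this to $p,q$ gives $pq=q$ and $qp=p$, hence $q=e$ and $p=f$; therefore $af=ap=b$ and $be=bq=a$, and you are done.

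For comparison, the paper avoids this detour entirely by a minimality argument: it picks $g\in G_{a,b}=\{x:ax=b\}$ with $gM$ minimal and $h\in G_{b,a}$ with $hM$ minimal, observes that $hgh\in G_{b,a}$ forces $hM=hgM=hghM$ (so $[h]_\R$ is non-trivial), does the symmetric argument for $g$, and then uses linearity of $\XX_\R(M)$ together with a cardinality count $|hgM|\leq|gM|\leq|hM|=|hgM|$ to get $gM=hM$; $\R$-rigidity finishes. The paper's route is shorter and self-contained, whereas your route implicitly relies on the right-zero-band structure of $[e]_\R$, which is exactly the next lemma in the paper.
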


\begin{proof}
Fix a non-trivial $\R$-class $[c]_\R$ and let $a,b\in [c]_\R$ with $a\neq b$. 
For every $y,z\in M$, define \[G_{y,z}=\{g_{y,z}\in M:  yg_{y,z}=z\}.\] 
Notice that if $y\Rbin z$, then $G_{y,z}$ is non-empty.
 Let $g\in G_{a,b}$ be such that $gM$ is minimal in $\{xM:  x\in G_{a,b}\}$, and similarly let $h\in G_{b,a}$ be such that $hM$ is minimal in $\{xM:  x\in G_{b,a}\}$.

We claim that $g\Rbin h$.  
   Notice that $hgh\in G_{b,a}$ since $bhgh=agh=bh=a$. Since $hghM\subset  hgM\subset  hM$, by minimality of $hM$ we have $hghM=hgM=hM$, so $h \Rbin hg$ and $h\Rbin hgh$. Notice that $hg\in G_{b,b}$ and that $G_{b,a}\cap G_{b,b}=\emptyset$, so $h\neq hg$ and the class $[h]_\R$ is non-trivial. Similarly, $g\Rbin gh \Rbin ghg$, $gh\in G_{a,a}$ and so the class $[g]_\R$ is non-trivial. 
By hypothesis this implies either $gM\subset hM$ or $hM\subset gM$.  
Suppose for example $gM\subset hM$. Then, 
\[|h(gM)|\leq |gM|\leq |hM|=|hgM|\] 
and so $|hM|=|gM|$ and $hM=gM$. 
This implies that $h$, $hg$,  $g$, $gh$,  are all in the same $\R$-class, hence $gh=h$ and $hg=g$, by definition of $\R$-rigid  and Proposition~\ref{prop:equivalent conditions for aperiodic monoids}.
\end{proof} 

\begin{lemma}\label{lem:XrM linear implies product of classes stays into classes}
    Let $M$ be a finite aperiodic monoid such that $\XX_\R(M)$ is linear. Then, for every $a\in M$, if there are $b,c\in [a]_\R$ such that $bc=c$, then for every $b,c\in [a]_\R$ we have $bc=c$.
\end{lemma}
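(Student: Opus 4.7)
The plan is to first extract from the hypothesis an idempotent $e\in D:=[a]_\R$ that acts as a left identity on $D$, then to prove the stronger statement $be=e$ for every $b\in D$, and finally to read off the conclusion.

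For the first step, if $b_0=c_0$ the hypothesis immediately gives $b_0^2=b_0$. Otherwise Lemma~\ref{lem:action_of_R_rigid_monoid_fixes_semigroups_bis} applied to the distinct pair $(b_0,c_0)$ produces $g_0,h_0$ with $c_0 h_0=b_0$, and combining with $b_0 c_0=c_0$ yields $b_0=c_0 h_0=(b_0 c_0) h_0=b_0(c_0 h_0)=b_0^2$; so $b_0$ is idempotent in either case. Setting $e:=b_0\in D$ and noting that every $c\in D=[e]_\R$ lies in $eM$ (write $c=em_c$), the relation $ec=e^2 m_c=em_c=c$ follows for every $c\in D$, making $e$ a left identity on $D$.

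The main step is to show $be=e$ for every $b\in D$. The case $b=e$ is trivial. For $b\neq e$, apply Lemma~\ref{lem:action_of_R_rigid_monoid_fixes_semigroups_bis} to $(b,e)$ to obtain distinct $g,h\in M$ with $bg=e$, $eh=b$, $gh=h$, $hg=g$, and $gM=hM$. Since $g\neq h$ the class $[g]_\R$ is non-trivial, so $gM\in\XX_\R(M)$; likewise $aM\in\XX_\R(M)$, and by linearity of $\XX_\R(M)$ these two ideals are comparable, which splits the argument into a trichotomy. (i) If $gM=aM$, then $h\in D$, so the left identity property forces $eh=h$; combined with $eh=b$ this gives $h=b$, and then $hg=g$ becomes $bg=g$, which together with $bg=e$ yields $g=e$, whence $be=bg=e$. (ii) If $aM\subsetneq gM$, then $e\in gM$, so $e=gm_e$ for some $m_e\in M$; the relation $hg=g$ then gives $he=h(gm_e)=(hg)m_e=gm_e=e$, hence $be=(eh)e=e(he)=e^2=e$. (iii) If $gM\subsetneq aM$, then $h\in aM$, so $h=am_h$ for some $m_h\in M$, and $ea=a$ gives $eh=e(am_h)=(ea)m_h=am_h=h$; combined with $eh=b$ this forces $h=b\in D$, contradicting $[h]_\R=[g]_\R\subsetneq[a]_\R$. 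Thus only cases (i) and (ii) occur, and in both $be=e$.

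I expect the main obstacle to be this case analysis: the crucial observation is that linearity of $\XX_\R(M)$, applied to the two non-trivial $\R$-classes $[a]_\R$ and $[g]_\R$, is precisely the right tool to control the position of $h$, and that the offending case $gM\subsetneq aM$ is automatically excluded by the already-secured left identity property of $e$. Once $be=e$ is known for every $b\in D$, the conclusion is immediate: given $b,c\in D$, write $c=em_c$ and compute $bc=b(em_c)=(be)m_c=em_c=c$, completing the proof.
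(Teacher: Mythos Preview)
Your proof is correct. It follows a genuinely different route from the paper's, though both share the same first reduction: once some $b_0c_0=c_0$ holds, the element $b_0$ becomes a left identity on the whole $\R$-class. The paper obtains this in one line via $\R$-rigidity (if $xy=y$ then $xz=z$ for every $z\in[y]_\R$, since $xzM=xyM=zM$), whereas you reach the idempotence of $b_0$ through Lemma~\ref{lem:action_of_R_rigid_monoid_fixes_semigroups_bis}, which is a detour.

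The main divergence is in the second step. You invoke Lemma~\ref{lem:action_of_R_rigid_monoid_fixes_semigroups_bis} to obtain the special pair $g,h$ with $gh=h$, $hg=g$, and then run a trichotomy on the position of $gM$ relative to $aM$ in $\XX_\R(M)$. The paper instead picks \emph{any} $g,h$ with $ag=b$, $bh=a$ (no Lemma~\ref{lem:action_of_R_rigid_monoid_fixes_semigroups_bis} needed), observes that $ha\neq hb$ so $haM\in\XX_\R(M)$, and then uses a short cardinality count $|aM|=|bhaM|\le|haM|\le|aM|$ together with linearity to force $haM=aM$; $\R$-rigidity then gives $ha=a$ and hence $ba=bha=a$. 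The paper's argument is more economical and avoids both the case split and the dependence on the refined conclusion of Lemma~\ref{lem:action_of_R_rigid_monoid_fixes_semigroups_bis}; your approach, while longer, is entirely self-contained once Lemma~\ref{lem:action_of_R_rigid_monoid_fixes_semigroups_bis} is available and makes the role of linearity very explicit.
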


\begin{proof}
First, notice that if $xy=y$ for some $x,y\in M$, then $xz=z$ for every $z\in [y]_\R$ since $xzM=xyM=yM=zM$ and since $M$ is $\R$-rigid  by Proposition~\ref{prop:equivalent conditions for aperiodic monoids}. 

Hence, we just need to prove that given a non-trivial $\R$-class $[a]_\R$ such that $ax=x$ for every $x\in [a]_\R$, and given an element $b\in [a]_\R$ with $b\neq a$, then we have $ba=a$. 

Let $g,h$ be such that $ag=b$ and $bh=a$. 
Notice that $ha\Rbin hb$ since $haM=hbM$, and also $ha\neq hb$ since $bha=a\neq b=bhb$. Then $haM\in \XX_\R(M)$ and so $haM\subset aM$ or $aM\subset haM$. We have $|aM|=|bhaM|\leq |haM|\leq |aM|$, hence $|haM|=|aM|$ and by linearity of $\XX_\R(M)$ we must have $haM=aM$. Since $M$ is $R$-rigid, $ha=a$ holds and we have  $ba=bha=a$.
\end{proof}

With this, we are ready to prove Theorem~\ref{theorem monoid actions introduction}.

\begin{theorem}\label{thm:get idempotent for main theorem} 
Let $M$ be a finite aperiodic monoid.
Let $U$ be a compact right topological semigroup on which $M$ acts by continuous endomorphisms.
If $\XX_\R(M)$ is linear, then there exists a minimal idempotent $ {u}\in E(U)\cap I(U)$ such that $a( {u})=b( {u})$ for all couples $a,b\in M$ such that $a\Rbin b$.
  \end{theorem}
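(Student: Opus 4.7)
My plan is to construct $u$ as $g_1(e)$ for a carefully chosen idempotent $g_1 \in M$ and an arbitrary minimal idempotent $e$ of $U$. The guiding intuition is twofold: any idempotent $g \in M$ acts as the identity on $g(U)$; and by the linearity of $\XX_\R(M)$, among the non-trivial $\R$-classes with a suitable ``left-identity'' structure there is a $\leq_\R$-minimum $\mathcal{D}_1$, so that for any $g_1 \in \mathcal{D}_1$ the image $g_1(U)$ is contained in $g(U)$ for every relevant $g$ produced by Lemma~\ref{lem:action_of_R_rigid_monoid_fixes_semigroups_bis}.

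First, I would dispose of the trivial case ($M$ is $\R$-trivial) by taking $u = e$. Otherwise, for each pair of distinct $a \Rbin b$, Lemma~\ref{lem:action_of_R_rigid_monoid_fixes_semigroups_bis} produces $g, h$ with $ag = b$, $bh = a$, $gh = h$, $hg = g$. By Lemma~\ref{lem:XrM linear implies product of classes stays into classes} the class $[g]_\R$ is left-identity (i.e. $xy = y$ for all $x, y$ in it), so every element of $[g]_\R$ is idempotent in $M$. I would let $\mathcal{D}_1$ denote the $\leq_\R$-smallest non-trivial left-identity $\R$-class of $M$, well-defined by the linearity of $\XX_\R(M)$ and the finiteness of $M$, and fix $g_1 \in \mathcal{D}_1$.

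Given a minimal idempotent $e \in E(U) \cap I(U)$ (Proposition~\ref{facts idempotents}), I would set $u := g_1(e)$. Then $u$ is idempotent since $g_1$ is an endomorphism and $e$ is idempotent; $u \in I(U)$ since $g_1^{-1}(I(U))$ is a two-sided ideal of $U$, which by minimality of $I(U)$ contains $I(U)$ itself, forcing $g_1(I(U)) \subseteq I(U)$. Moreover, $u$ is a minimal idempotent, because in a compact right topological semigroup $I(U)$ is the union of its minimal left ideals, so $u$ lies in some minimal left ideal $L$; any $f \in E(U)$ with $f \leq_U u$ also lies in $L$ (as $f = fu \in UL \subseteq L$), and within $L$ the idempotents satisfy $e_1 e_2 = e_1$, which together with the equality $uf = f$ forces $f = u$.

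Finally, to verify $a(u) = b(u)$ for distinct $a \Rbin b$, I would use the idempotent $g$ from Lemma~\ref{lem:action_of_R_rigid_monoid_fixes_semigroups_bis}. By the choice of $\mathcal{D}_1$, $\mathcal{D}_1 \leq_\R [g]_\R$, so $g_1 \in gM$, say $g_1 = gy$; then $g \cdot g_1 = g^2 y = g_1$ (using $g^2 = g$), so $g(u) = (g g_1)(e) = g_1(e) = u$. Consequently $b(u) = (ag)(u) = a(g(u)) = a(u)$, as required. The subtlest point I anticipate is the minimality of $u$ in $U$, which I would handle through the minimal-left-ideal argument above rather than by any direct comparison with $e$.
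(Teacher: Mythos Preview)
There is a genuine gap in your argument, at the step where you claim $u = g_1(e) \in I(U)$. Your reasoning is that $g_1^{-1}(I(U))$ is a two-sided ideal of $U$ and therefore contains $I(U)$; but ideals are by convention non-empty, and nothing guarantees that $g_1(U)$ meets $I(U)$ at all. In fact this fails already for Carlson's monoid $M=\{1,a,b\}$ (with $xy=y$ for $x,y\neq 1$) acting on the finite semilattice $U=(\{0,1,2\},\min)$ via $\phi_a=\phi_b\equiv 2$: one checks this is an action by endomorphisms, $I(U)=\{0\}$, the unique minimal idempotent is $e=0$, yet $g_1(e)=\phi_a(0)=2\notin I(U)$ and $2$ is not a minimal idempotent. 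So your candidate $u$ need be neither minimal nor in $I(U)$, even though the theorem holds here with $u=0$.

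The underlying phenomenon is that a continuous endomorphism can push minimal idempotents \emph{up} in the order $\leq_U$, not down. The paper's proof addresses precisely this: it does not apply any $a_i$ to a pre-chosen minimal idempotent of $U$, but instead builds a descending chain $u_0\geq_U u_1\geq_U\dots\geq_U u_{n+1}$ where $u_i$ is minimal in $E(a_i(U))$, starting from the \emph{smallest} image $a_0(U)$ and repeatedly descending inside the larger semigroups via Proposition~\ref{facts idempotents}\ref{facts idempotents-2}. Only at the last step is $u=u_{n+1}$ chosen minimal in $E(U)$, guaranteeing $u\in I(U)$; the chain then lets one show $b(u)=u_i$ for $b\in[a_i]_\R$ via minimality of $u_i$ in $a_i(U)$. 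Your idea of using Lemma~\ref{lem:action_of_R_rigid_monoid_fixes_semigroups_bis} to produce an idempotent $g$ with $g(u)=u$ is exactly how the paper handles the non-idempotent $\R$-classes, but it only works once $u$ is already known to be a minimal idempotent of $U$ lying below suitable idempotents of the $a_i(U)$---and that is what the chain construction provides.
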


\begin{proof}
Let $a_0 M \subsetneq ...\subsetneq a_n M$ be an increasing enumeration of $\XX_\R(M)$ and define $a_{n+1}=1$.
Every $a_i(U)$ is a semigroup, since $a_i(u_1)\cdot a_i(u_2) =a_i(u_1\cdot u_2)$, and it is compact because it is a continuous image of a compact space. Then, $a_i(U)$ is a compact subsemigroup of the compact semigroup $a_{i+1}(U)$. 
We want to find a chain of idempotents $u_i$ such that $u_{i+1}\leq_U u_i$ and such that $u_i$ is minimal in $E(a_i(U))$ with respect to $\leq_{a_i(U)}$, for every $i\leq n+1$.

First, by points~\ref{facts idempotents-1} and~\ref{facts idempotents-2} of Proposition~\ref{facts idempotents}, we can find $u_0\in a_0(U)$ satisfying the requirement. 
Then, suppose we have $u_i \in a_i(U)$ idempotent. Since $a_i(U)\subset a_{i+1}(U)$ we may apply point~\ref{facts idempotents-2} of Proposition~\ref{facts idempotents} to find $u_{i+1}\in a_{i+1}(U)$ idempotent such that $u_{i+1}\leq_{a_{i+1}(U)} u_i$ and $u_{i+1}$ is minimal in $E(a_{i+1}(U))$, and this concludes the construction. Since $a_{n+1}=1$ and $E(a_{n+1}(U))=E(U)$, by point \ref{facts idempotents-3} of Proposition~\ref{facts idempotents} we also know that $u_{n+1}\in I(U)$.

We claim that $u=u_{n+1}$ satisfies the requirements of the thesis. 

First, we want to show that for each $\R$-class $[a_i]_\R$ with $a_ia_i=a_i$
we have \[b( u)= u_i \text{ for all } b\in [ a_i]_\R.\]

By Lemma~\ref{lem:XrM linear implies product of classes stays into classes}, for every $b\in[a_i]_\R$ we have $ba_i=a_i$, and this implies that for every $v\in a_i(U)$, say $v=a_i(u_v)$, we have $b(v)=b(a_i(u_v))=a_i(u_v)=v$. In particular for $v=u_i$, we have $b(u_i)=u_i$.
Notice that the action of $M$ is order preserving on $(U,\leq_U)$, since it is by endomorphisms. 
Since $ u\leq_U u_i$ we get 
\[b( u)\leq_U b( u_i)= u_i.\] 
Thus, $b( u)\leq_{ a_i(U)} u_i$, and since $ u_i$ is minimal in $ a_i(U)$, we get $b( u)= u_i$.

Now consider a non trivial $\R$-class $[a_i]_\R$ such that $a_ia_i\notin [a_i]_\R$, and let $a,b\in [a_i]_\R$. Let $g,h$ be given as in  Lemma~\ref{lem:action_of_R_rigid_monoid_fixes_semigroups_bis} such that $ag=b$ and $bh=a$ and $hg=g$. Notice that this implies $bg=bhg=ag=b$ and also $h(u)=g(u)$, since $[g]_\R$ belongs to the previous case. Then, $a(u)=bh(u)=bg(u)=b(u)$.
\end{proof}

We take the opportunity to state a corollary of Lemma~\ref{lem:action_of_R_rigid_monoid_fixes_semigroups_bis}.

\begin{corollary}\label{cor:action_of_R_rigid_monoid_fixes_semigroups_bis}
	Let $M$ be a finite aperiodic monoid such that $\XX_\R(M)$ is linear, let $U$ be a set and fix a left action of $M$ on $U$. Then, for every $a,b\in M$ with $a\Rbin b$ and for every $u\in a(U)$ we have $a(u)=b(u)$.
\end{corollary}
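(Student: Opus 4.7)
My plan is to reduce the statement to the purely algebraic identity $ba = a^2$ in $M$. Once this is established, the conclusion follows at once: any $u \in a(U)$ can be written $u = a(v)$ for some $v \in U$, and then
\[
b(u) = b(a(v)) = (ba)(v) = (a^2)(v) = a(a(v)) = a(u).
\]

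The case $a = b$ is trivial, so I assume $a \neq b$. Then $[a]_\R$ is non-trivial and Lemma~\ref{lem:action_of_R_rigid_monoid_fixes_semigroups_bis} supplies distinct $g, h \in M$ with $ag = b$, $bh = a$, $gh = h$, $hg = g$, and $gM = hM$; a short manipulation then gives $g^2 = g$, $h^2 = h$, $ah = a$, $bg = b$, and moreover $[g]_\R$ is non-trivial, so both $aM$ and $gM$ belong to $\XX_\R(M)$. I then split on whether $a^2 \in [a]_\R$. In the idempotent subcase $a^2 \Rbin a$, the $\R$-rigid characterization of aperiodicity from Proposition~\ref{prop:equivalent conditions for aperiodic monoids} forces $a^2 = a$, so the pair $a, a \in [a]_\R$ satisfies $aa = a$, and Lemma~\ref{lem:XrM linear implies product of classes stays into classes} then delivers $ba = a = a^2$.

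The main effort is in the other subcase $a^2 \notin [a]_\R$. Here I aim for the auxiliary identity $ga = a$, after which $ba = (ag)a = a(ga) = a^2$ is immediate. To extract $ga = a$, I use the linearity of $\XX_\R(M)$ to compare $aM$ and $gM$. The hypothesis $g \in aM$, say $g = am$, would give $b = ag = a^2 m$, placing $bM$ inside $a^2 M \subsetneq aM$ and contradicting $b \Rbin a$; hence $g \notin aM$. Linearity then leaves only the reverse strict comparison $aM \subsetneq gM$, so $a = gm$ for some $m \in M$, and the idempotence $g^2 = g$ collapses $ga = g(gm) = gm = a$.

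The expected obstacle is precisely this extraction of $ga = a$ in the non-idempotent subcase: Lemma~\ref{lem:action_of_R_rigid_monoid_fixes_semigroups_bis} alone does not provide it, and it is the linearity of $\XX_\R(M)$, applied to the pair $aM, gM$, that eliminates the wrong comparison $gM \subseteq aM$ and lets the identity go through.
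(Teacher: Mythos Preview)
Your proof is correct, but it takes a different route from the paper's.

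The paper does not reduce to the identity $ba=a^2$ and does not split on whether $a^2\in[a]_\R$; in particular it never invokes Lemma~\ref{lem:XrM linear implies product of classes stays into classes}. Instead, after extracting $g,h$ from Lemma~\ref{lem:action_of_R_rigid_monoid_fixes_semigroups_bis} and deriving $g^2=g$ and $bg=b$ exactly as you do, it proves $aM\subseteq gM$ in one stroke by a cardinality estimate: $|aM|=|bhM|\le|hM|=|gM|$, and linearity of $\XX_\R(M)$ then forces $aM\subseteq gM$. From Fact~\ref{fact a(M)=b(M) ---> a(U)=b(U)} this gives $a(U)\subseteq g(U)$, so any $u\in a(U)$ can be written $u=g(v)$, and the computation
\[
a(u)=a(g(v))=a(g^2(v))=(ag)(g(v))=b(g(v))=b(u)
\]
finishes (equivalently, the underlying monoid identity is $ag=bg$, both sides being $b$). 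Compared with your argument, the paper's is shorter and uniform, at the cost of the slightly slicker cardinality trick; your approach avoids that trick and is more hands-on, but pays for it with the case split and the appeal to Lemma~\ref{lem:XrM linear implies product of classes stays into classes}. Note also that your non-idempotent branch in fact re-proves $aM\subseteq gM$ by a different method, so the two arguments converge there; the real divergence is that the paper writes $u=g(v)$ rather than $u=a(v)$, which is what makes the case split unnecessary.
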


\begin{proof}
	Let $a,b\in M$ be such that $a\Rbin b$ and $a\neq b$, and let $g,h\in M$ be given by Lemma~\ref{lem:action_of_R_rigid_monoid_fixes_semigroups_bis} such that $ag=b$ and $bh=a$, and $gh=h$ and $hg=g$. This in particular implies $gg=ghg=hg=g$, and $bg=bhg=ag=b$. Notice that by linearity of $\XX_\R(M)$ either $a(M)\subset g(M)$ or $g(M)\subseteq a(M)$ holds, since both $[a]_\R$ and $[g]_\R$ are non-trivial. Then, we have $a(M)\subseteq g(M)$, since $|aM|=|bhM|\leq |hM|=|gM|$, and also $a(U)\subseteq g(U)$, by Fact~\ref{fact a(M)=b(M) ---> a(U)=b(U)}.
	Fix $u\in a(U)$ and find $v\in U$ such that $u=g(v)$. We have \par \vspace{\abovedisplayshortskip}\hfill$\displaystyle a(u)=a(g(v))=a((gg)(v)))=ag(g(v)))=b(g(v))=b(u).$ 
\end{proof}

\section{Coloring theorems and aperiodic monoids}\label{section: dynamic for furstenberg and katznelson}

In this section, we discuss how from Theorem~\ref{thm:get idempotent for main theorem} one can prove Theorems~\ref{main theorem introduction} and \ref{main introduction 2, Y(M) controllable} following ideas from Solecki's paper. The main novelty introduced here is the proof that $\YY$-controllable monoids and Ramsey monoids are aperiodic.

Let us recall some relevant notions for this section. 
Given a monoid $M$, the set \emph{$\YY(M)$}$\subseteq \powerset (\XX(M))$ consists of the non-empty subsets of $\XX(M)$ which are linearly ordered by inclusion. Define \emph{$x\leq_{\YY(M)}y$}, for $x, y\in \YY(M)$, if and only if $x\subseteq y$ and all elements of $y\setminus x$ are larger 
with respect to $\subseteq$ than all elements of $x$. 

There is a natural left $M$-action on $\YY(M)$ defined as $x\mapsto mx=\{maM: aM\in x\}$. 
Given two left actions of $M$ on $U$ and $U'$, a map $f:U\to U'$ is said \emph{$M$-equivariant} if it preserves the action of $M$, i.e. $f(ma)=mf(a)$.

For ease of notation, we isolate the following class of monoids.

\begin{definition}
	A monoid $M$ is called \emph{good} if for every left action of $M$ by continuous endomorphisms on a compact right topological semigroup $U$ there exists a function $g\colon {\mathbb Y}(M)\to E(U)$ such that 
	\begin{enumerate}
		\item[(i)] $g$ is $M$-equivariant;
		\item[(ii)] $g$ is order reversing with respect to $\leq_{{\mathbb Y}(M)}$ and $\leq_U$;
		\item[(iii)] $g$ maps maximal elements of ${\mathbb Y}(M)$ to $I(U)$.
	\end{enumerate}
\end{definition}

The notion of good monoids was first used by Solecki in \cite{MR3904213}. We borrow here three results that are contained or essentially proved therein.

The following useful lemma has the same function as two other lemmas by Lupini \cite[Lemma 2.2]{MR3620731} and Barrett \cite[Theorem 5.8]{barrett2020ramsey}, i.e. to get stronger conclusions from results like Theorem~\ref{theorem monoid actions introduction}.

\begin{lemma}[{\cite[Lemma 2.5]{MR3904213}}] \label{lemma Solecki} Let $M$ be a finite monoid. Assume that for every left action of $M$ by continuous endomorphisms on a compact right topological semigroup $U$ there is a $M$-equivariant $f$ from $\YY(M)$ to $U$ such that $f$ maps maximal elements of $\YY(M)$ to $I(U)$. Then, $M$ is good.
\end{lemma}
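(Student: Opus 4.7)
The hypothesis already supplies an $M$-equivariant map $f : \YY(M) \to U$ sending maximal elements of $\YY(M)$ to $I(U)$; what the conclusion of goodness additionally requires is that (1) the values of $g$ lie in $E(U)$ and not just in $U$, and (2) $g$ is order-reversing with respect to $\leq_{\YY(M)}$ and $\leq_U$. My plan is to apply the hypothesis not to $U$ itself, but to a carefully designed compact right topological semigroup $V$ built from $U$, so that any $M$-equivariant map into $V$ sending maximal elements to $I(V)$ automatically yields, by a suitable projection or evaluation, a map satisfying all three clauses (i)--(iii) of goodness.

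The semigroup $V$ should have the following shape: its points are families of elements of $U$ indexed by $\YY(M)$ (or by chains in $\YY(M)$), equipped with a pointwise or chainwise semigroup operation. The $M$-action on $V$ is defined componentwise from the given action on $U$; this makes evaluation at any single coordinate a continuous $M$-equivariant semigroup homomorphism, and also makes $V$ a compact right topological semigroup since $U$ is one. Crucially, $V$ should be engineered so that its idempotents---and in particular the elements of its minimal two-sided ideal $I(V)$---correspond precisely to the compatible families of idempotents in $U$ that one would want for $g$: idempotent-valued, with decreasing $\leq_U$-behaviour along chains in $\YY(M)$.

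Once $V$ is in place, one applies the hypothesis to $V$ to obtain an $M$-equivariant $\tilde f : \YY(M) \to V$ with $\tilde f(y_{\max}) \in I(V)$ for every maximal $y_{\max} \in \YY(M)$. The map $g$ is then defined by evaluating $\tilde f(y)$ at the coordinate corresponding to $y$ (or to some canonical chain ending at $y$). The three properties of goodness should follow formally: $M$-equivariance of $g$ is inherited from that of $\tilde f$ together with the coordinatewise nature of the $M$-action on $V$; $g(y) \in E(U)$ and the order-reversing condition are built into $V$ by design, via Proposition~\ref{facts idempotents} applied inside the fibres of $V$; and $g(y_{\max}) \in I(U)$ follows from $\tilde f(y_{\max}) \in I(V)$ together with the fact that coordinate evaluation, being a surjective continuous homomorphism, sends the minimal ideal of $V$ into the minimal ideal of $U$.

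The main obstacle is the construction of $V$ itself: it must be simultaneously compact, a semigroup under a well-chosen operation, $M$-invariant under a continuous action by endomorphisms, and---most delicately---its idempotent and minimal-ideal structure must be arranged so that the projection really does produce an order-reversing, idempotent-valued, equivariant map. Verifying these alignments (in particular that $I(V)$ projects into $E(U)$ with the right ordering) is the technical heart of the argument; everything else reduces to an application of the hypothesis to $V$.
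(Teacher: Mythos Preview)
The paper does not give its own proof of this lemma; it is simply cited from Solecki. Your strategic outline is correct and is essentially Solecki's approach: apply the hypothesis not to $U$ but to an auxiliary compact right topological semigroup $V$ built out of $U$-valued functions on (chains in) $\YY(M)$, and then extract $g$ by evaluation at a suitable coordinate.

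That said, what you have written is a plan, not a proof. You yourself identify the construction of $V$ as the heart of the matter and then do not carry it out, and two points in your informal description would lead you astray if taken literally. First, a \textit{pointwise} product on $U^{\YY(M)}$ cannot work: its idempotents are arbitrary $\YY(M)$-indexed families of idempotents of $U$, with no $\leq_U$-relations forced between coordinates, so clause~(ii) would never come for free. One needs a twisted product along each chain, designed so that an idempotent family $F$ automatically satisfies $F(y)\cdot F(y')=F(y)=F(y')\cdot F(y)$ whenever $y\leq_{\YY(M)} y'$. Second, the $M$-action on $V$ cannot be ``componentwise on the $U$-values'' alone, since $M$ also acts on the index set $\YY(M)$; the correct action intertwines both, and verifying that it is by continuous endomorphisms of the twisted product---and that evaluation remains $M$-equivariant---is part of the real work. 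Until $V$, its multiplication, and the $M$-action are actually written down and these compatibilities checked, the argument is incomplete.
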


We isolate the following lemma from the proof of \cite[Theorem~2.4]{MR3904213} since it gives a sufficient condition for a monoid to be good. 

\begin{lemma}\label{step for theorem 4.4}
	Let $M$ be a finite  monoid and assume that for every action by continuous endomorphisms of $M$ on a compact right topological semigroup $U$ there exists a minimal idempotent $ {u}\in E(U)\cap I(U)$ such that $a( {u})=b( {u})$ for all couples $a,b\in M$ such that $a\Rbin b$. Then, $M$ is good.
\end{lemma}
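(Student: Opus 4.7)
The plan is to reduce to Lemma~\ref{lemma Solecki}: it suffices to construct, for every action of $M$ by continuous endomorphisms on a compact right topological semigroup $U$, an $M$-equivariant map $f\colon \YY(M) \to U$ that sends maximal elements of $\YY(M)$ to $I(U)$. So I can forget the ``lands in $E(U)$'' and ``order reversing'' clauses of goodness, letting Solecki's lemma take care of them.

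Fix such an action of $M$ on $U$ and invoke the hypothesis to obtain a minimal idempotent $u \in E(U)\cap I(U)$ satisfying $a(u)=b(u)$ whenever $a\Rbin b$. Using $u$, I would define $f$ by sending each $x\in\YY(M)$ to $a(u)$, where $aM$ is the $\subseteq$-maximum of the chain $x\subseteq\XX(M)$ (which exists since $M$ is finite, hence $x$ is finite). This is well-defined: any two representatives $a,b$ with $aM=bM$ satisfy $a\Rbin b$, and therefore $a(u)=b(u)$ by the defining property of $u$.

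Next I would verify the two conditions needed by Lemma~\ref{lemma Solecki}. For \textbf{equivariance}: left multiplication by $m\in M$ preserves $\subseteq$ on $\XX(M)$, so the maximum of $mx=\{ma'M : a'M\in x\}$ is $maM$ whenever the maximum of $x$ is $aM$; hence $f(mx)=(ma)(u)=m(a(u))=m\cdot f(x)$. For the \textbf{image of maximal elements}: if $x$ is maximal under $\leq_{\YY(M)}$, then $M=1\cdot M$ (the $\subseteq$-top of $\XX(M)$) must lie in $x$, since otherwise $x\cup\{M\}$ would properly extend $x$ in $\leq_{\YY(M)}$; hence the maximum of $x$ is $1M$ and $f(x)=1(u)=u\in I(U)$.

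With both conditions verified, Lemma~\ref{lemma Solecki} delivers the map $g\colon\YY(M)\to E(U)$ witnessing that $M$ is good. I do not foresee a substantive obstacle: the only minor point requiring care is the observation that every maximal element of $\YY(M)$ contains $M$, which is a direct consequence of the definition of $\leq_{\YY(M)}$.
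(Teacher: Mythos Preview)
Your proposal is correct and follows essentially the same approach as the paper: the paper factors your map as $f\circ\pi$, where $\pi\colon\YY(M)\to\XX(M)$ takes the $\subseteq$-maximum and $f\colon\XX(M)\to U$ sends $aM\mapsto a(u)$, then invokes Lemma~\ref{lemma Solecki} after observing that both factors are $M$-equivariant and that maximal elements of $\YY(M)$ contain $1M$. Your direct definition of the composite and your explicit check that left multiplication preserves $\subseteq$ on $\XX(M)$ (hence preserves maxima) are exactly the content of the paper's claim that $\pi$ is $M$-equivariant.
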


\begin{proof}
	Let $\pi :\YY(M) \rightarrow \XX(M)$ be the function that maps a set $x\subseteq \YY(M)$ to the maximal element in $x$ with respect to $\subseteq$. Let $u\in E(U)\cap I(U)$ be given by hypothesis. The function $f:\XX(M) \rightarrow E(U)$ that maps $aM$ to $a(u)$ is well-defined, and maps $1M$ to $u\in E(U)\cap I(U)$. Also, notice that if $y$ is a maximal element of $\YY(M)$, then $1M\in y$ and so $\pi\circ f (y)=u\in E(U)\cap I(U)$. Since both $f $ and $\pi$ are $M$-equivariant the map $f \circ \pi: \YY(M)\rightarrow E(U)$ satisfies the assumptions of Lemma~\ref{lemma Solecki}, from which we get that $M$ is good.
\end{proof}

Solecki in \cite[Corollary 4.3]{MR3904213} states that every finite almost $\R$-trivial monoid is $\YY$-controllable, but in the proof he shows something stronger. In fact, the hypothesis that $M$ is almost $\R$-trivial is used only to apply \cite[Theorem 2.4]{MR3904213}, which states that every finite almost $\R$-trivial monoid is good. The remaining part of the proof never uses this hypothesis again, and relies instead on the fact that $M$ is good.  
In other words, from the proof of \cite[Corollary 4.3]{MR3904213} one can derive also the following result.

\begin{theorem}\label{thm: condition implies YY(M) controllable}
	Let $M$ be a finite monoid. If $M$ is good, then it is $\YY$-controllable.
\end{theorem}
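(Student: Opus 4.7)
The plan is to run the standard Galvin--Glazer ultrafilter construction on the partial semigroup $W := \langle(X_n)_{n\in\omega}\rangle$, using goodness of $M$ as the source of a ``controlled'' minimal idempotent in place of almost $\R$-triviality as in \cite[Corollary 4.3]{MR3904213}. To the uniform coordinate-wise action of $M$ on $W$ I would associate a compact right topological semigroup $U$: the space of ultrafilters on $W$ whose members are cofinal in the support partial order, so that convolution products remain defined on the partial semigroup $W$. The endomorphism action of $M$ on $W$ lifts to a continuous endomorphism action on $U$ via pushforward of ultrafilters. Applying goodness of $M$ yields $g\colon \YY(M)\to E(U)$ which is $M$-equivariant, order-reversing, and sends maximal elements into $I(U)$; set $p := g(\textbf{y})\in I(U)$, so that $a(p)=g(a\textbf{y})$ for every $a\in M$.

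Next, I claim that $g$ extends to a semigroup homomorphism $\bar g\colon \langle\YY(M)\rangle \to U$. The defining relation $q\vee q'=q'=q'\vee q$ for $q\leq_{\YY(M)} q'$ is respected by $g$: order-reversal gives $g(q')\leq_U g(q)$, which by definition of $\leq_U$ means $g(q)\cdot g(q')=g(q')=g(q')\cdot g(q)$. Consequently, for every $v\in \langle\YY(M)\rangle$ with representation $v=a_0\textbf{y}\vee\dots\vee a_n\textbf{y}$, the element $\bar g(v)=a_0(p)\cdot\dots\cdot a_n(p)$ depends only on $v$, not on the chosen representation. For each $v\in F$, let $\mathrm{col}(v)$ be the unique color of the finite coloring $c$ of $W$ such that $c^{-1}(\mathrm{col}(v))\in \bar g(v)$; by the previous observation $\mathrm{col}(v)$ is well-defined.

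Then I would construct the basic sequence $\bar s$ by recursion on $k\in\omega$, maintaining the invariant that for every $v=a_0\textbf{y}\vee\dots\vee a_n\textbf{y}\in F$ and every $i_0<\dots<i_n<k$, the product $a_0s_{i_0}\cdot\dots\cdot a_ns_{i_n}$ has color $\mathrm{col}(v)$. At stage $k$, $s_k$ is chosen from a finite intersection of sets, each lying in some ultrafilter either of the form $\bar g(v)$ or of the form ``left translate of $\bar g(v'')$ by a partial product already built'', together with the constraints that $s_k$ contain a distinguished point and lie sufficiently high in the support order to preserve basicity. Because ultrafilters are closed under finite intersections, a valid choice of $s_k$ exists at every stage. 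The resulting $\bar s$ witnesses $\YY$-controllability: if two $\vee$-expressions represent the same $v$, the common target color $\mathrm{col}(v)$ forces both products to share a color.

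The main obstacle will be the careful bookkeeping in the inductive step: for each $v\in F$ and each initial product $y$ already built (having, by induction, the intended color for the prefix $v'$), one must use associativity in $U$ together with the factorization $\bar g(v)=\bar g(v')\cdot \bar g(v'')$ to convert ``$c^{-1}(\mathrm{col}(v))\in \bar g(v)$'' into an explicit set-membership condition that a permissible choice of $s_k$ must satisfy. Interleaving this with the partial-semigroup basicity constraint is the other delicate point; both are standard in the ultrafilter framework, but they must be set up uniformly over all $v\in F$ and all compatible partial products.
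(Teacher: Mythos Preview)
Your proposal is correct and follows essentially the ultrafilter (Galvin--Glazer) route that the paper attributes to Solecki's proof of \cite[Corollary~4.3]{MR3904213}: lift the action to the semigroup of cofinal ultrafilters, apply goodness to get $g$, extend $g$ to a homomorphism $\bar g\colon\langle\YY(M)\rangle\to U$ via the order-reversing property, and then run the recursive extraction. The paper's own explicit proof (Section~\ref{section:proof of main theorem with model theory}) takes a genuinely different path: it works in a monster model, replaces ultrafilters by types in the augmented language $L^+$, and replaces the hand-built recursion by a \emph{coheir sequence} $(u_n)_{n\in\omega}$ of $u=g(\textbf{y})$. Indiscernibility of the coheir sequence makes $(F,\textbf{y},c)$-controllability of $\cev u_{\restriction i}$ almost immediate, and the passage from the monster model back to $G$ is a short finite-satisfiability argument. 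What your approach buys is that it stays entirely within classical ultrafilter combinatorics; what the paper's approach buys is that the delicate bookkeeping you flag as ``the main obstacle'' is absorbed into the indiscernibility of the coheir sequence and the one-line observation that suffixes of elements of $F$ have representations of bounded length.

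Two small points to tighten in your write-up. First, you assert that the distinguished-point constraint is compatible with the ultrafilter conditions but do not say why: you need that $\{q\in U: DP\in q\}$ is a non-empty both-sided ideal of $U$ (immediate from the coordinate-wise action and the fact that each $X_n$ is pointed), whence $p\in I(U)$ forces $DP\in p$. Second, your phrase ``each lying in some ultrafilter either of the form $\bar g(v)$ or \dots'' is imprecise: the finitely many conditions at stage $k$ live in \emph{different} ultrafilters $\bar g(v'')$ and $a(p)$, and you cannot intersect across ultrafilters. The correct move is to pull every condition back to a $p$-large set via the pushforward identity $A\in a(p)\iff a^{-1}A\in p$ and the convolution formula, and then intersect inside $p$. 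With these two adjustments your sketch goes through.
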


However, the reader can find a short model-theoretic proof of this result in Section~\ref{section:proof of main theorem with model theory}.

Finally, the following is a restatement of part of the proof of \cite[Corollary~4.5 (i)]{MR3904213}. 

\begin{fact}\label{fact if X(M) is linear and conclusions of weakly Ramsey then M is Ramsey}
	If $M$ is $\YY$-controllable and $\XX(M) $ is linear, then $M$ is Ramsey.
\end{fact}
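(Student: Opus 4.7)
My plan is to exploit the linearity of $\XX(M)$ to collapse, inside $\langle\YY(M)\rangle$, every product $a_0\textbf{y}\vee\cdots\vee a_n\textbf{y}$ coming from an element of $\langle\bar s\rangle_M$ to one and the same element of $\YY(M)$; once this is done the $\YY$-controllable condition immediately forces $\langle\bar s\rangle_M$ to be monochromatic. The natural choice of maximal element of $\YY(M)$ is $\textbf{y}:=\XX(M)$ itself: linearity of $\XX(M)$ places $\textbf{y}$ in $\YY(M)$, and any $z\in\YY(M)$ with $\textbf{y}\leq_{\YY(M)} z$ must satisfy $z\subseteq\XX(M)=\textbf{y}$, so $\textbf{y}$ is maximal in $\YY(M)$.

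The key algebraic step I would carry out is the inequality $a\textbf{y}\leq_{\YY(M)}\textbf{y}$ for every $a\in M$. A short computation identifies $a\textbf{y}=\{(ar)M:r\in M\}$ with $\{cM\in\XX(M):cM\subseteq aM\}$, so $a\textbf{y}$ is exactly the $\subseteq$-initial segment of $\XX(M)$ ending at $aM$. Linearity of $\XX(M)$ then guarantees that every element of $\textbf{y}\setminus a\textbf{y}$ is $\subseteq$-strictly larger than $aM$, and hence than every element of $a\textbf{y}$; this is precisely the end-extension condition defining $\leq_{\YY(M)}$. From this, together with the defining relations $p\vee q=q=q\vee p$ for $p\leq_{\YY(M)} q$, a short induction shows that any product $a_0\textbf{y}\vee\cdots\vee a_n\textbf{y}$ with at least one $a_i=1_M$ collapses to $\textbf{y}$ in $\langle\YY(M)\rangle$: the factor $a_i\textbf{y}=\textbf{y}$ successively absorbs each neighbouring $a_j\textbf{y}$ on either side.

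With this identity in hand, given a sequence $(X_n)_{n\in\omega}$ of pointed $M$-sets on which $M$ acts uniformly and any finite coloring of $\langle(X_n)_{n\in\omega}\rangle$, I would apply $\YY$-controllability to $F:=\{\textbf{y}\}$ with this choice of $\textbf{y}$. The basic sequence $\bar s$ so produced automatically has a distinguished point in each $s_n$; moreover, any two elements $m_0 s_{i_0}\cdots m_n s_{i_n}$ and $m'_0 s_{j_0}\cdots m'_{n'} s_{j_{n'}}$ of $\langle\bar s\rangle_M$ have associated shapes that both reduce to $\textbf{y}\in F$, so the $\YY$-controllable conclusion forces these two products to receive the same color. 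Hence $\langle\bar s\rangle_M$ is monochromatic and $M$ is Ramsey. The only mildly delicate point I anticipate is the collapsing computation in $\langle\YY(M)\rangle$, since the defining relations are stated between generators of the free semigroup; but once $a\textbf{y}\leq_{\YY(M)}\textbf{y}$ is established, the absorption is entirely mechanical by induction on the length of the product.
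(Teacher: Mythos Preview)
Your proposal is correct and follows essentially the same route as the paper: set $\textbf{y}=\XX(M)$ and $F=\{\textbf{y}\}$, compute $a\textbf{y}=\{cM\in\XX(M):cM\subseteq aM\}$, deduce $a\textbf{y}\leq_{\YY(M)}\textbf{y}$, and use the defining relations of $\langle\YY(M)\rangle$ to collapse any product $a_0\textbf{y}\vee\cdots\vee a_n\textbf{y}$ with some $a_i=1_M$ down to $\textbf{y}$, whence $\YY$-controllability gives monochromaticity of $\langle\bar s\rangle_M$. The paper phrases the absorption step slightly more generally (showing $a\textbf{y}\vee b\textbf{y}=b\textbf{y}$ whenever $aM\subseteq bM$) but the argument is the same.
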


\begin{proof}
	Notice that $\XX(M)$ is linear if and only if $\XX(M)\in\YY(M)$. We want to use the definition of $\YY$-controllable with $\textbf{y}=\XX(M)$ and $F=\{\textbf{y}\}$.
	It is enough to notice that for every $a\in M$ we have
	\[ a\XX(M)=\{amM: mM\in \XX(M)\}=\{xM:  xM\subset aM\}.\] 
	Hence, if $aM\subseteq bM$, then $a\textbf{y}\vee b\textbf{y}=b\textbf{y}=b\textbf{y}\vee a\textbf{y}$, and so $a_1\textbf{y}\vee \dots \vee a_n\textbf{y}=\textbf{y}\in F$ for every $a_1,..,a_n\in M$ with at least one $i$ such that $a_i=1$.
\end{proof}

\begin{theorem}\label{main 2}
		Let $M$ be a finite monoid. \begin{enumerate}
		    \item If $M$ is aperiodic and has a linear $\XX_\R(M)$, then it is $\YY$-controllable. \label{main2-1} 
		    \item If $M$ is $\YY$-controllable, then it is aperiodic. \label{main2-2} 
		\end{enumerate}
\end{theorem}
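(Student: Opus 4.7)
Part (1) follows immediately by chaining earlier results. Assuming $M$ is finite, aperiodic, and $\XX_\R(M)$ is linear, Theorem~\ref{thm:get idempotent for main theorem} provides, for every action of $M$ by continuous endomorphisms on a compact right topological semigroup $U$, a minimal idempotent $u \in E(U) \cap I(U)$ with $a(u) = b(u)$ whenever $a \Rbin b$. Lemma~\ref{step for theorem 4.4} then shows that $M$ is good, and Theorem~\ref{thm: condition implies YY(M) controllable} concludes that $M$ is $\YY$-controllable.

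For part (2), I argue by contrapositive: assuming $M$ is not aperiodic, I construct a coloring witnessing the failure of $\YY$-controllability. By Proposition~\ref{prop:equivalent conditions for aperiodic monoids}, $M$ contains a non-trivial subgroup $H$ with identity $e$; fix $a \in H$ with $a \neq e$. Take $X = M$ with distinguished point $1$ and left-multiplication action, set $X_n = X$ for every $n$ (so the action is uniform and $\langle (X_n)_{n \in \omega} \rangle = M^{<\omega}$), let $\textbf{y} = \{M\}$, which is a maximal element of $\YY(M)$ since $M$ is the $\subseteq$-largest element of $\XX(M)$, and set $F = \{\{eM\}\} \subseteq \langle \YY(M) \rangle$. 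Since $a, e$ belong to the group $H$ we have $aM = eM$, and hence $e\textbf{y} = a\textbf{y} = \{eM\} \in F$.

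Define $K = \{m \in M : em = am\}$. A direct computation using $a^{-1} \in H$ shows $1 \notin K$ (because $e \neq a$) and that $x \in K \iff ex \in K \iff ax \in K$. For each $\langle a \rangle$-orbit of $eM$ of size $l \geq 2$, label it cyclically: pick a representative $m_0$ and set $f(a^j m_0) = j \bmod l$; extend $f \colon M \to \{0, \ldots, |H|-1\}$ arbitrarily to the complement. Because the pairs $\{em, am\}$ with $m \notin K$ are precisely the consecutive pairs along these cycles, the resulting $f$ satisfies $f(em) \neq f(am)$ for every $m \notin K$. Color a sequence $s \in M^{<\omega}$ by $c(s) = f(x_{i_0(s)})$, where $i_0(s)$ is the least index $i$ with $x_i \notin K$ (and $c(s) = 0$ if no such index exists).

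Now suppose $M$ were $\YY$-controllable; let $\bar{s}$ be the resulting basic sequence. Applying the $\YY$-controllability condition with $n = m = 0$, $a_0 = e$, $b_0 = a$, $i_0 = j_0 = 0$ forces $c(es_0) = c(as_0)$. However, $s_0$ contains the distinguished point $1 \notin K$, so its first non-$K$ position $i_0$ exists with value $m := x_{i_0}(s_0) \notin K$; by the $K$-invariance property, $i_0$ is also the first non-$K$ position of both $es_0$ and $as_0$, with values $em$ and $am$ respectively. Hence $c(es_0) = f(em) \neq f(am) = c(as_0)$, a contradiction. The main technical point is the construction of $f$ satisfying $f(em) \neq f(am)$ for all $m \notin K$: this reduces to properly coloring a disjoint union of finite cycles, one per non-trivial $\langle a \rangle$-orbit of $eM$, for which a cyclic labeling suffices. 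Strengthening the construction so that $f(m) \neq f(am)$ also for $m \notin eM$ yields, by the same argument, that Ramsey monoids are aperiodic, as announced in the introduction.
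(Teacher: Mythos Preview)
Your proof of part~(1) is identical to the paper's. Your proof of part~(2) is correct but follows a somewhat different path. The paper works directly with an element $a$ satisfying $a^{n+1}\neq a^n$ for all $n$, takes $\textbf{y}=\{a^nM:n\in\omega\}$ (which is maximal since it contains $M$) and $F=\{\textbf{y}\}$, and observes that $a\textbf{y}\vee\textbf{y}=\textbf{y}=\textbf{y}\vee\textbf{y}$. Setting $C=\{m\in M: a^nm\in\{a^k:k\in\omega\}\text{ for some }n\}$, one checks $1\in C$, $ac\neq c$ for $c\in C$, and $ac\in C\iff c\in C$; the coloring simply records the first letter of the word lying in $C$ (or a default color $\bot$). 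For any sequence of variable words, the two products $y_0\conc y_1$ and $ay_0\conc y_1$ then receive colors $c_1\in C$ and $ac_1\neq c_1$.

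Compared with your argument, the paper's version is more economical in two respects. First, the coloring uses the letter itself rather than an auxiliary orbit-labeling $f$, so no construction on cycles is needed. Second, since both $y_0\conc y_1$ and $ay_0\conc y_1$ already lie in the $M$-span $\langle\bar y\rangle_M$, the very same computation shows at once that Ramsey monoids are aperiodic, whereas your approach (comparing $es_0$ with $as_0$, neither of which is in the span) requires the additional extension of $f$ to $M\setminus eM$ that you sketch in your final sentence. Your route through a nontrivial subgroup and the singleton $\textbf{y}=\{M\}$ is perfectly valid and perhaps conceptually cleaner in isolating the obstruction to $\YY$-control at the level of a single factor, but it trades this for a slightly longer coloring construction and a separate treatment of the Ramsey case.
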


\begin{proof} 
First, let $M$ be a finite aperiodic monoid with linear $\XX_\R(M)$.	By Theorem~\ref{thm:get idempotent for main theorem} and Lemma~\ref{step for theorem 4.4}, we get that $M$ is good. Hence, Theorem~\ref{thm: condition implies YY(M) controllable} implies that $M$ is $\YY$-controllable, and statement~\ref{main2-1} holds.
 
In order to prove \ref{main2-2}, let $(M^{<\omega},\conc)$ be the free semigroup over $M$, with coordinate-wise action. Notice that $(M^{<\omega},\conc)$ can be seen as $\langle (X_n)_{n<\omega}\rangle $ setting all $X_n=M$, with $1$ as distinguished point, and a word $w$ has a distinguished point if and only if $1\in\ran{w}$ (in which case we call $w$ a variable word).

Suppose $M$ is not aperiodic, and let $a\in M$ be such that $a^{n+1}\neq a^{n}$ for every $n\in\omega$. 
Let $A=\{a^{n}: n\in\omega\}$, and let $C=\{m\in M : a^nm\in A\mbox{ for some } n\in\omega\}$. Then, we have $ac\neq c$ for every $c\in C$, and $ac\in C$ if and only if $c\in C$.
	
	Let $\textbf{y}=\{a^{n}M: n\in\omega\}$, where we set $a^0=1$, and let $F=\{\textbf{y}\}$. 
	Then, $\textbf{y}$ is a maximal element of $\YY(M)$, and $\textbf{y}\vee \textbf{y}=\textbf{y}=a\textbf{y}\vee \textbf{y}$. 
	
	Let $C\cup\{\bot\}$ be the set of colors. Given a word $w\in M^{<\omega}$, let $m$ be the first letter of $w$ in $C$, if any. 
	If there is such $m$, color $w$ by $m$. Otherwise, color $w$ by $\bot$.   
	Consider any sequence of variable words $\bar{y}\in (M^{<\omega})^\omega$, and consider the words $ y_0 \conc y_1$ and $  {a}(y_0) \conc y_1$ with colors $c_1$ and $c_2$ respectively. Then, $c_1\in C$, since $y_0$ is a variable word and $1\in \ran(y_0)$. 
	Hence, by definition of $C$ we have $c_2=ac_1$. Therefore, $c_2=ac_1\neq c_1$, contradicting the fact that $M$ is $\YY$-controllable.
\end{proof}

\begin{theorem}\label{main 1}
	\label{thm:main theorem for Ramsey}
	Let $M$ be a finite monoid. The following are equivalent:
	\begin{enumerate}
		\item $M$ is Ramsey.\label{main-1}
		\item $M$ is aperiodic and $\XX(M)$ is linear.\label{main-2}
	\end{enumerate}
\end{theorem}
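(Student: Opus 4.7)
The plan is to deduce this theorem from the material already assembled in the paper, splitting the biconditional into its two directions and doing essentially no new work.

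First I would handle the direction $(\ref{main-2})\Rightarrow(\ref{main-1})$. Assuming $M$ is aperiodic with linear $\XX(M)$, I would observe that $\XX_\R(M)\subseteq\XX(M)$ inherits linearity from $\XX(M)$, so part~\ref{main2-1} of Theorem~\ref{main 2} applies and $M$ is $\YY$-controllable. Since $\XX(M)$ is linear, Fact~\ref{fact if X(M) is linear and conclusions of weakly Ramsey then M is Ramsey} immediately upgrades $\YY$-controllability to the Ramsey property, so $M$ is Ramsey.

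For the converse $(\ref{main-1})\Rightarrow(\ref{main-2})$, linearity of $\XX(M)$ is already known from Solecki's original paper (cited in the introduction right after Definition~\ref{def:Ramsey_original_intro}), so only aperiodicity remains. My plan here is to recycle the coloring argument from part~\ref{main2-2} of Theorem~\ref{main 2} essentially verbatim, the point being that that argument in fact relies only on the defining Ramsey property of $M$ and not on the stronger $\YY$-controllable hypothesis. Concretely, supposing for contradiction that $a\in M$ satisfies $a^{n+1}\neq a^n$ for all $n$, I would color the free semigroup $(M^{<\omega},\conc)$ by the first letter of each word that belongs to $C=\{m\in M : a^n m\in A\text{ for some }n\}$, where $A=\{a^n:n\in\omega\}$, and by $\bot$ if no such letter exists, exactly as in the proof of Theorem~\ref{main 2}.

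The only point to verify — and the mild obstacle in this plan — is that the contradiction really follows from the Ramsey property alone. Given a basic sequence $\bar s\in(M^{<\omega})^\omega$ of words each with a distinguished point (i.e.\ variable words), both $s_0\conc s_1$ and $(a\cdot s_0)\conc s_1$ lie in the span $\langle\bar s\rangle_M$ because in each the coefficient $m_1=1_M$ provides the required distinguished coefficient. Using the closure property $am\in C\iff m\in C$, the first word receives color $c_1\in C$ coming from the first $C$-letter of $s_0$, while the second receives color $a c_1$; since $ac\neq c$ for every $c\in C$ these colors differ, contradicting the monochromaticity of $\langle\bar s\rangle_M$ guaranteed by the Ramsey property. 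Hence aperiodicity must hold, and no genuinely new combinatorial input beyond assembling Theorem~\ref{main 2}, Fact~\ref{fact if X(M) is linear and conclusions of weakly Ramsey then M is Ramsey}, and Solecki's necessity of linearity is required.
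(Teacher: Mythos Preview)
Your proposal is correct and matches the paper's own proof essentially line for line: the paper also derives $(\ref{main-2})\Rightarrow(\ref{main-1})$ from Theorem~\ref{main 2} and Fact~\ref{fact if X(M) is linear and conclusions of weakly Ramsey then M is Ramsey}, and for $(\ref{main-1})\Rightarrow(\ref{main-2})$ it cites Solecki for linearity of $\XX(M)$ and remarks that the coloring argument of Theorem~\ref{main 2}\,(\ref{main2-2}) already works from the Ramsey hypothesis alone. Your explicit verification that both $s_0\conc s_1$ and $(a\cdot s_0)\conc s_1$ lie in $\langle\bar s\rangle_M$ (via the coefficient $1_M$ on $s_1$) is exactly the point the paper leaves implicit in that remark.
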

\begin{proof}
Proof of point \ref{main2-2} of Theorem~\ref{main 2} also shows that if $M$ is Ramsey then it is aperiodic. If $M$ is Ramsey, then $\XX(M)$ is linear by \cite[Corollary 4.5 (ii)]{MR3904213}.
Theorem~\ref{main 2} and Fact~\ref{fact if X(M) is linear and conclusions of weakly Ramsey then M is Ramsey} prove that \ref{main-2} implies \ref{main-1}.
\end{proof}

\begin{corollary}\label{cor:equivalence_Ramsey_and_YM-controllable+XM-linear}
    Let $M$ be a finite monoid. Then, $M$ is Ramsey if and only if it is $\YY$-controllable and $\XX(M) $ is linear.
\end{corollary}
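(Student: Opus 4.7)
The plan is to derive this as a direct corollary of Theorem~\ref{main 1}, Theorem~\ref{main 2}, and Fact~\ref{fact if X(M) is linear and conclusions of weakly Ramsey then M is Ramsey}, with no essentially new argument needed.

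For the forward direction, I would assume $M$ is Ramsey. Theorem~\ref{main 1} then yields that $M$ is aperiodic and $\XX(M)$ is linear. Since $\XX_\R(M) \subseteq \XX(M)$ and any subset of a linearly (quasi-)ordered set is itself linearly ordered by the induced order, $\XX_\R(M)$ is linear as well. Point~\ref{main2-1} of Theorem~\ref{main 2} then applies to give that $M$ is $\YY$-controllable, and $\XX(M)$ linear is carried over directly from Theorem~\ref{main 1}.

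For the backward direction, assume $M$ is $\YY$-controllable and $\XX(M)$ is linear. This is precisely the hypothesis of Fact~\ref{fact if X(M) is linear and conclusions of weakly Ramsey then M is Ramsey}, which immediately delivers that $M$ is Ramsey.

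There is no real obstacle here: the corollary is essentially a cleaner reformulation of Theorem~\ref{main 1}, trading the conjunction \emph{``aperiodic and $\XX(M)$ linear''} for the arguably more natural conjunction \emph{``$\YY$-controllable and $\XX(M)$ linear''}. The only small observation one should record is the monotonicity of linearity under passing to subsets, which is what makes the step from $\XX(M)$ linear to $\XX_\R(M)$ linear legitimate in the forward implication.
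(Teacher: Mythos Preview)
Your proposal is correct and matches the paper's (implicit) approach: the paper states this corollary without proof, treating it as an immediate consequence of Theorem~\ref{main 1}, Theorem~\ref{main 2}, and Fact~\ref{fact if X(M) is linear and conclusions of weakly Ramsey then M is Ramsey}, which is exactly the route you take.
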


We conclude this chapter with a corollary concerning the definition of Ramsey monoid.
It is not clear to the authors whether the following result can be proved with methods similar to those developed in Section~\ref{section:Equivalent definitions of Ramsey monoid}, without passing through Theorem~\ref{thm:main theorem for Ramsey}.

Recall that a variable words is a word $w$ such that $1\in \ran(w)$.

	\begin{corollary}
	\label{prop:Ramsey_W_M_without_rapisly_increasing}
	Let $M$ be a finite monoid. The following are equivalent: \begin{enumerate}
		\item $M$ is Ramsey.\label{point-2 Ramsey_W_M_without_rapisly_increasing}
		\item For all finite coloring of  $M^{<\omega}$ there are two variable words $y_0$ and $y_1$ such that $M{y_0}\conc y_1$ is monochromatic.\label{cor:Ramsey_weakest_condition}
	\end{enumerate}
\end{corollary}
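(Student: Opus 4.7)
I will prove the two directions separately, using Theorem~\ref{thm:main theorem for Ramsey} as a black box for the nontrivial implication.

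For (1) $\Rightarrow$ (2), I would specialize the definition of Ramsey monoid to the sequence of pointed $M$-sets $(X_n)_{n\in\omega}$ with $X_n = M$ for every $n$ and distinguished point $1_M$; as noted in the proof of Theorem~\ref{main 2}(2), this yields $\langle (X_n)_{n\in\omega}\rangle = M^{<\omega}$, and ``sequences with distinguished points'' correspond exactly to sequences of variable words. Given any finite coloring of $M^{<\omega}$, Ramseyness of $M$ produces a basic sequence $\bar{s}$ of variable words with $\langle\bar{s}\rangle_M$ monochromatic; setting $y_0 := s_0$ and $y_1 := s_1$, the set $My_0 \conc y_1$ is exactly the subfamily of $\langle\bar{s}\rangle_M$ corresponding to $n = 1$, $i_0 = 0$, $i_1 = 1$, and $m_1 = 1_M$, hence is monochromatic.

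For (2) $\Rightarrow$ (1), by Theorem~\ref{thm:main theorem for Ramsey} it suffices to show that (2) forces $M$ to be both aperiodic and to have linear $\XX(M)$. The aperiodicity part is immediate from the proof of Theorem~\ref{main 2}(2): the coloring of $M^{<\omega}$ constructed there has the property that, whenever $M$ is not aperiodic (witnessed by $a \in M$ with $a^{n+1} \neq a^n$ for all $n$), for every pair of variable words $y_0, y_1$ the words $y_0 \conc y_1$ and $ay_0 \conc y_1$ receive different colors. Since both lie in $My_0 \conc y_1$, this contradicts (2).

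For the linearity of $\XX(M)$, I would adapt the argument of \cite[Corollary 4.5(ii)]{MR3904213}: assuming $aM, bM \in \XX(M)$ are incomparable, exhibit a finite coloring of $M^{<\omega}$ under which $ay_0 \conc y_1$ and $by_0 \conc y_1$ land in distinct colour classes, uniformly in variable words $y_0, y_1$. The main obstacle, and the place where care is required, is that Solecki's coloring was originally designed to rule out monochromaticity of a full span $\langle\bar{s}\rangle_M$, so one must check that it already discriminates at the two-word level; if it does not, the fix is to work with a more robust coloring --- for instance $c(w) = \{uM : u \in \ran(w)\} \in \powerset(\XX(M))$ --- and use the incomparability of $aM$ and $bM$ together with the variability of $y_0$ (so $1 \in \ran(y_0)$, forcing $aM \in c(ay_0 \conc y_1)$ while $aM \notin \{bu_j M : u_j \in \ran(y_0)\}$ since $bu_jM \subseteq bM$) to conclude non-monochromaticity of $My_0 \conc y_1$ directly, possibly after refining $c$ further to neutralize the contribution of the letters of $y_1$.
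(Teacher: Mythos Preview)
Your approach is exactly the paper's: the proof in the paper is two sentences, stating that the exact same arguments of \cite[Corollary 4.5(ii)]{MR3904213} and of point~\ref{main2-2} of Theorem~\ref{main 2} already operate at the two-word level to force linearity of $\XX(M)$ and aperiodicity respectively, after which Theorem~\ref{thm:main theorem for Ramsey} closes the loop; the forward implication is by definition. Your hedging about whether Solecki's coloring discriminates already between $ay_0\conc y_1$ and $by_0\conc y_1$ is unnecessary --- the paper asserts it does --- and your fallback coloring $c(w)=\{uM:u\in\ran(w)\}$ indeed has the defect you flag (the $y_1$-block contributes $M$ and possibly other classes identically to both words), so you were right not to rely on it.
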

\begin{proof} The exact same proofs of \cite[Corollary 4.5 (ii)]{MR3904213} and point~\ref{main2-2} of Theorem~\ref{main 2} show that if condition~\ref{cor:Ramsey_weakest_condition} hold, then $M$ is aperiodic and $\XX(M)$ is linear. The rest follows from Theorem~\ref{thm:main theorem for Ramsey} and by definition of Ramsey monoid.
\end{proof}

 \section{A model-theoretic approach}\label{section:proof of main theorem with model theory} 

In this section, we give a short explicit proof of Theorem~\ref{thm: condition implies YY(M) controllable}. 
We shall use Proposition~\ref{prop: types are right topological semigroup}, which says that the space of types $S(G)$ over a semigroup $G$ is a compact right topological semigroup if we add some symbol to the  signature. This approach is discussed in \cite{colla2019ramsey} and is further developed here. 
 The one difference in exposition is that here we define a product between types, while in \cite{colla2019ramsey} we define a product between type-definable sets. We assume some basic knowledge of model theory. We refer the reader to Tent's and Ziegler's book \cite{MR2908005}.

In what follows, we consider a semigroup $G$ such that $M$ acts by endomorphisms  on $G$, and a monster model $\G$.

We say that a type $p({\bl x})$ is  \emph{finitely satisfied\/} in $G$
if every finite conjunction of formulas in $p({\bl x})$ has a solution 
in $G^{|{ x}|}$.
We write \emph{${ a}\cnonfork_G{ b}$} if $\tp (a/Gb)$ is finitely satisfied in $G$. 
In literature this relation is also denoted by $a\nonfork^U_G{ b}$.

We say that the tuple ${ \bar c}$ is a \emph{coheir sequence\/} of
$p({\bl x})$ over $G$ if $c_n\models p(x)$ and ${  c_n}\cnonfork_G{  \bar{c}_{\restriction n}}$ and  ${  c_{n+1}}\equiv_{G,\,{  \bar{c}_{\restriction n}}}{  c_n}$ for every $n<\omega$.
In particular, $\bar{c}$ is indiscernible over $G$, i.e. ${  \bar{c}_{\restriction I_0}}\equiv_G {  \bar{c}_{\restriction I_1}}$ 
   for every $I_0,I_1\subseteq \omega$ of equal finite cardinality.

The following is an easy well-known fact.
\begin{fact}
For every type $p(x)\in S(G)$ there is a coheir sequence of $p(x)$.
\end{fact}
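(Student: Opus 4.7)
The plan is to exhibit a global extension of $p(x)$ that is finitely satisfied in $G$, and then to read off the coheir sequence by realizing this extension step by step along the construction of $\bar c$.

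First I would show that $p(x)$ admits an extension $\tilde p(x)\in S(\G)$ that is finitely satisfied in $G$. The family of definable sets $\{\phi(\G;\bar a) : \phi(x;\bar a)\in p\}$ together with all sets of the form $\psi(\G;\bar b)$ with $\psi(G;\bar b)\neq\emptyset$ has the finite intersection property when viewed through its traces on $G^{|x|}$, so by a standard application of Zorn's lemma (or equivalently, by choosing an ultrafilter $\U$ on the Boolean algebra of $G$-traces of $\G$-definable sets that contains all the traces of formulas in $p$) we obtain a complete global type $\tilde p$ extending $p$ with the property that every formula $\phi(x;\bar b)\in\tilde p$ is satisfied by some tuple from $G$. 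Equivalently, $\tilde p$ restricted to any parameter set $A\supseteq G$ is finitely satisfied in $G$.

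Next I would recursively define the sequence $(c_n)_{n<\omega}$ by picking $c_n\in\G^{|x|}$ realizing $\tilde p\restriction G\bar c_{\restriction n}$; this is possible because $\G$ is sufficiently saturated. Then each $c_n$ still realizes $\tilde p\restriction G=p$, which yields the first clause of the definition; and since $\tilde p$ is finitely satisfied in $G$, so is its restriction to $G\bar c_{\restriction n}$, giving $c_n\cnonfork_G\bar c_{\restriction n}$, which is the second clause. For the third clause, both $c_{n+1}$ and $c_n$ realize $\tilde p\restriction G\bar c_{\restriction n}$ (for $c_{n+1}$ because it realizes the larger restriction $\tilde p\restriction G\bar c_{\restriction n+1}$, and for $c_n$ by construction), so they have the same type over $G\bar c_{\restriction n}$.

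The only nonroutine step is the construction of $\tilde p$, and even this is a classical compactness argument; I expect it to pose no real obstacle beyond the careful bookkeeping of parameters. The resulting sequence is automatically indiscernible over $G$ as noted in the paper, but the proof above only uses what is needed for the three clauses.
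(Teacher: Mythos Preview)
The paper does not actually prove this fact; it simply labels it as ``easy well-known'' and moves on. Your approach---extend $p$ to a global type finitely satisfiable in $G$ and then realize it recursively---is the standard one and is correct in outline.

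One small correction: your first description of the finite intersection property is not right. You cannot take the family $\{\phi(\G;\bar a):\phi\in p\}$ together with \emph{all} sets $\psi(\G;\bar b)$ having nonempty trace on $G$ and expect the traces to have the finite intersection property; two such $\psi$'s can easily have disjoint traces (e.g.\ $x=g_1$ and $x=g_2$). What you actually need is only that the traces of the formulas in $p$ themselves have the finite intersection property (which they do, since $p$ is a type over $G$), and then extend this filter to an ultrafilter on the Boolean algebra of traces---exactly your parenthetical ``or equivalently'' formulation. So the ultrafilter description is the correct one; just drop the first version. The recursive construction of $(c_n)$ and the verification of the three clauses are fine.
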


In order to define a product between types, we need some stationarity. Here, we obtain it by adding sets to the signature.

Let $G$ be a semigroup on which a monoid $M$ acts by endomorphisms and let $L$ be its signature.
Let $L^+=L\cup\{A : A\subseteq G^n\text{, }n\in\omega \} \cup\{m:m\in M\}$ be the expansion of $L$ where the symbol $A$ is interpreted in $G$ as the set $A$, and $m$ is interpreted as the unary function $a\mapsto ma$.

From now on we consider $G$ with this augmented signature.

\label{products of types}
 For $a, b\in S(G)$ define \emph{$ a\cdot_G b$} as 
    $\tp (  a'\cdot   b' /G)$, for any 
      ${  a', b'}\in \G$ such that $ a'\models a$,  $b'\models b$ and 
      ${  a'}\cnonfork_G{  b'}$.

As usual, we will consider the compact topology on $S(G)$ generated by the basic open sets $\{t\in S(G): \phi(x)\in t\}$, for $\phi(x) \in L^+(G)$. 

\begin{proposition}\label{prop: types are right topological semigroup}
If $G$ is a model as above, then $(S(G), \cdot_G)$ is a compact right topological semigroup and the action of $M$ defined by $m\tp (a/G)=\tp (ma/G)$ is by continuous endomorphisms.
\end{proposition}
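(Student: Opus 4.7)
The plan is to verify, in order: (i) the product $\cdot_G$ is well-defined; (ii) right-multiplication by any fixed $q$ is continuous on $S(G)$; (iii) $\cdot_G$ is associative; and (iv) each $m\in M$ induces a continuous endomorphism. Compactness of $S(G)$ is the standard fact for Stone spaces of complete types in a countable-or-not signature.

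The heart of the argument is a single identity that handles (i) and (ii) simultaneously. Fix $q\in S(G)$ and an $L^+(G)$-formula $\phi(z)$; write $\phi(z)=\phi'(x,y)$ via $z=xy$. Because every subset of $G^n$ has its own predicate in $L^+$, the formula $\phi'(x,y)$ is equivalent in $\G$ to $P_\Phi(x,y)$, where $\Phi=\{(c,d)\in G^2 : \G\models\phi'(c,d)\}$. Define
\[
B_{q,\Phi}=\bigl\{c\in G : \{d\in G:(c,d)\in\Phi\}\in q\bigr\}\subseteq G,
\]
which depends only on $q$ and $\Phi$. The crucial claim is that for any $a\models p$, $b\models q$ with $a\cnonfork_G b$,
\[
\G\models\phi(ab)\ \Longleftrightarrow\ P_{B_{q,\Phi}}(x)\in p.
\]
To prove it, observe that for every $c\in G$ the biconditional $P_{B_{q,\Phi}}(c)\leftrightarrow P_\Phi(c,b)$ holds trivially (both sides say that the $c$-section of $\Phi$ belongs to $q$). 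Thus the $L^+(Gb)$-formula $P_{B_{q,\Phi}}(x)\leftrightarrow P_\Phi(x,b)$ is satisfied by every element of $G$; finite satisfiability of $\tp(a/Gb)$ in $G$ then forbids its negation from lying in $\tp(a/Gb)$, so the biconditional holds at $x=a$. This immediately yields well-definedness of $p\cdot_G q$ (the right-hand side depends only on $p,q,\phi$), and it makes the preimage of $\{t:\phi(z)\in t\}$ under $p\mapsto p\cdot_G q$ equal to the clopen set $\{p:P_{B_{q,\Phi}}(x)\in p\}$, giving continuity on the right.

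For associativity I would build a coheir chain from the right: pick $c\models r$, then $b\models q$ with $b\cnonfork_G c$, then $a\models p$ with $a\cnonfork_G bc$. Associativity in $\G$ gives $\tp(a(bc)/G)=\tp((ab)c/G)$; one then needs only verify $ab\cnonfork_G c$, which follows by a two-step application of finite satisfiability (use $a\cnonfork_G bc$ to realize half the formula in $G$, then $b\cnonfork_G c$ to realize the other half). For (iv), $m$ is a unary function symbol of $L^+$, so the $m$-action preserves $\equiv_G$ (well-definedness), and the preimage of $\{t:\phi(z)\in t\}$ under $p\mapsto mp$ is the basic open set $\{p:\phi(mx)\in p\}$ (continuity). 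Moreover $a\cnonfork_G b$ implies $ma\cnonfork_G mb$: any $L^+(Gmb)$-formula $\phi(x)$ with $\phi(ma)$ gives $\phi(mx)\in\tp(a/Gb)$, whose witness $c\in G$ yields $mc\in G$. Since $m$ is an endomorphism of $G$ we then get $m(p\cdot_G q)=\tp(m(ab)/G)=\tp((ma)(mb)/G)=(mp)\cdot_G(mq)$.

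The main obstacle is the key identity in step (i)--(ii): it is the unique place where both features of the setup---the enrichment of $L$ by predicates for every subset of $G^n$, and the coheir hypothesis---play essential roles together. Once this equivalence is secured, everything else reduces to routine first-order bookkeeping or to properties of $\G$ as a semigroup with $M$-action.
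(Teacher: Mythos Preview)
Your approach is essentially correct and self-contained, but one step is misstated. The claim that ``$\phi'(x,y)$ is equivalent in $\G$ to $P_\Phi(x,y)$'' is false in general: these formulas agree on $G^2$ by construction, but quantifiers in $\phi'$ may acquire new witnesses in $\G$. What you actually need is only that $\phi'(a,b)\leftrightarrow P_\Phi(a,b)$ holds for the specific pair $(a,b)$, and this does follow: since $a\cnonfork_G b$ and $\tp(b/G)$ is finitely satisfied in $G$ (being a type over a model), the joint type $\tp(a,b/G)$ is finitely satisfied in $G^2$, so the biconditional transfers from $G^2$ to $(a,b)$. Even simpler, you can drop $P_\Phi$ entirely: set $B=\{c\in G:\phi'(c,y)\in q\}$, note that $P_B(c)\leftrightarrow\phi'(c,b)$ for all $c\in G$, and then apply $a\cnonfork_G b$ exactly as you did to get $P_B(a)\leftrightarrow\phi'(a,b)=\phi(ab)$. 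With this correction your argument goes through.

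Your route is genuinely different from the paper's. The paper delegates well-definedness, associativity, and the fact that $M$ acts by endomorphisms to an earlier reference, and proves only right-continuity, by a softer method: it encodes the coheir condition as a partial type $q(x,y)$ over $G$ and exhibits the preimage of an arbitrary closed set as the set of realizations of a partial type over $G$ (hence closed). Your argument instead exploits the enrichment of $L^+$ to compute explicitly that the preimage of a basic clopen is a basic clopen---in effect identifying $S(G)$ with $\beta G$ and $\cdot_G$ with the usual product on the Stone--\v{C}ech compactification. This buys you a self-contained proof of all four items at once; the paper's continuity argument, by contrast, makes no direct use of the extra predicates and would work in any setting where the product of types is already known to be well-defined.
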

\begin{proof}
Proposition 4.4 and Remark 2.7 of \cite[]{colla2019ramsey} prove that if $G$ is considered with  signature $L^+$ then $a\cdot_Gb$ is well-defined for every $a,b\in S(G)$ and $(S(G),\cdot_G)$ is a semigroup. In \cite[ Proposition 6.3]{colla2019ramsey} it is proved that the action of $M$ on $(S(G), \cdot_G)$ defined by $m\tp (a/G)=\tp (ma/G)$ gives well-defined endomorphisms of $(S(G), \cdot_G)$.
It is straightforward to check that the maps $m\tp (a/G)$ are continuous.
 The one missing proof is that $x\mapsto x\cdot_Gr$ is continuous from $S(G)$ to $S(G)$, for any $r\in S(G)$. 
 
Let $b\in \G$ and let $q(x,y)$ be the type in $S(G)$ such that $a\models q(x,b)$ if and only if $a\cnonfork_G b$, i.e.
\[ q(x,y)=\{\phi({ x};{ y})
    : 
   \phi({\bl x}\,;{ y})\in L^+(G)
   \text{ and } G^{|{\bl x}|}=\phi(G^{|{\bl x}|}\,;{ b})\}.\]
Notice that if $b'\equiv_Gb$, then $a\models q(x,b')\Longleftrightarrow a\cnonfork_G b'$.

     Let $p(z)$ be a partial type over $G$ and let $r(y)=\tp (b/G)$. The type \[\exists y,z\  r(y) \wedge q(x,y)\wedge z=x\cdot y \wedge p(z)\] is satisfied by those $a\in \G$ such that $\tp (a/G)\cdot_Gr$ satisfies $p(z)$. Hence, the preimage of the closed set $p(S(G))$ is closed.
\end{proof}

We are ready to prove Theorem~\ref{thm: condition implies YY(M) controllable}. 
Let us introduce the following auxiliary definition to simplify the notation of the next proof.
 
\begin{definition}\label{definition F-controllable}
Let $F$ be a finite subset of the semigroup $\langle \YY(M)\rangle$, let $\textbf{y}$ be a maximal element in $\YY(M)$, and let $c$ be a finite coloring of a semigroup $S$ on which $M$ acts. We say that a sequence $\bar{s}\in S^{\leq \omega}$ is \emph{$(F,\textbf{y},c)$-controllable} if  for every $m, n\leq |\bar{s}|$ and for every $a_i, b_j\in M$  \underline{if} $ a_0\textbf{y}\vee \dots \vee a_n\textbf{y}$ belongs to $F$ and
$ a_0\textbf{y}\vee \dots \vee a_n\textbf{y}=b_0\textbf{y}\vee\dots\vee b_m \textbf{y}$,
\underline{then} $ a_0s_{i_0}\cdot {\dots} \cdot a_ns_{i_n} $
has the same color of $b_0s_{j_0}\cdot {\dots} \cdot b_ms_{j_m}$, for every $i_0< \dots <i_n$,  $j_0<\dots <j_m $.
\end{definition}

\begin{proof}[Proof of Theorem~\ref{thm: condition implies YY(M) controllable}]
Let $(X_n)_{n\in\omega}$ be a sequence of pointed $M$-sets on which $M$ acts uniformly, and let ${\perp}$ be not in $\bigcup_{n\in\omega}X_n$. Define $G=(\langle (X_n)_{n\in\omega} \rangle \cup \{{\perp}\}, {\conc} )$ to be the semigroup extending $(\langle (X_n)_{n\in\omega} \rangle,{\conc})$ defining $x\conc y={\perp}$  if $x\conc y$ is not defined in the partial semigroup $\langle (X_n)_{n\in\omega} \rangle$. In particular, $x\conc {\perp}={\perp}\conc x={\perp}$. We write $x\cpaw y$ if and only if $x\conc y\neq {\perp}$ or $x={\perp}$.

It is enough to prove that for every finite subset  $F$  of $\langle \YY(M) \rangle$, for every maximal element $\textbf{y}$ in $\YY(M)$, and for every ${c}$ finite coloring of $G$ there is a basic sequence  $\bar{s}\in (\langle (X_n)_{n\in\omega} \rangle)^\omega$ that is $(F,\textbf{y}, c)$-controllable and such that $s_n$ has a distinguished point for every $n\in\omega$.

 Let $L=\{{\conc}, {\cpaw} \}$ and consider $G$ with augmented signature $L^+$.
 Let \[{U }=\{{ p}\in S(G): G\cpaw{  p}\}\] where $G\cpaw p$ is a shorthand for $\{g\cpaw x: g\in G\}\subseteq p(x)$.

  Notice that $U$ is non-empty, since for every finite set $A\subseteq G$ there is a $b\in G$ such that $A\cpaw \tp (b/G)$. Also, ${\perp} \notin U$. 
  We claim that ${U }$ is a compact subsemigroup of $(S(G),{\cdot_G})$. Let $\G$ be a monster model in the language $L^+$. Let $a,b \in \G$  such that ${  \tp (a/G)}\in {U }$, $\tp (b/G)\in {U }$ and ${  a}\cnonfork_G{  b}$. Then, we must have that ${  a}\cpaw{   b}$, since $G\cpaw { \tp (b/G)}$ and ${  a}\cnonfork_G{  b}$. Now, let $g\in G$ such that $g\neq {\perp}$. Then, $g\conc a\cnonfork_G b $ and hence $g\conc a\cpaw b$. Since $g\neq {\perp}$ and $g\cpaw a$ we also have $g\conc a \neq {\perp}$. 
  Hence, \[g\conc (a\conc b)=(g\conc a)\conc b\neq {\perp}.\] Therefore, we have that $g\cpaw (a\conc b)$ from which we get   
   ${  \tp (a/G)}{\cdot_G}{  \tp (b/G)}\in{ U }$. 
 Also, $ U $ is type-definable over $G$ and hence compact. Finally, notice that $a\cpaw mb$ for every $m\in M$ and for every $a,b\in G$ such that $a\cpaw b$. Hence, $U $ is closed under the action of $M$. By Proposition~\ref{prop: types are right topological semigroup},  it is a compact right topological semigroup such that $M$ acts on $U$ by continuous endomorphisms.

Let $u=g(\textbf{y})\in E(U)\cap I(U)$, where $g:\YY(M)\rightarrow E(U)$ is the function given by definition of good monoid. Let $DP$ be the set of elements of $\langle (X_n)_{n\in\omega} \rangle$ that have at least one distinguished point, and let $J=\{p\in S(G): DP\in p\}$.  Since $J\cap U$ is a non-empty both-sided ideal of $U$, and $u\in I(U)$, we have that $u$ is in $J$.
Let $(u_n)_{n\in\omega}$ be a coheir sequence of $u$. We write ${  \cev{u}_{\restriction i}}$ for the tuple 
${  u_{i-1}},\dots,{  u_{0}}$.  Notice that since the map $g$ is order-reversing and $M$-equivariant, for every $a_0,\dots, a_n, b_0,\dots,b_m\in M$ if $a_0\textbf{y}\vee \dots \vee a_n\textbf{y}=b_0\textbf{y}\vee\dots\vee b_m \textbf{y}$ then also $a_0 u\cdot_G {\dots} \cdot_G a_n u=b_0 u\cdot_G {\dots}\cdot_G b_m u$. Hence, it is straightforward to check that $  \cev{u}_{\restriction i}$ is $(F,\textbf{y}, c)$-controllable for every $i\in\omega$.  Notice that $  \cev{u}_{\restriction i}$ is a basic sequence since products stay in $U$ and ${\perp} \notin U$. Finally, $u_n$ are elements of $DP$ since $u\in J$. 
Now, we use the sequence ${ \cev u}$ to define $\bar{s} \in G^\omega$ with same properties as $\cev{u}$.

Let $k\in\omega$ be such that for every element $f\in F$ there are $k'< \kappa$ and $a_0,\dots,a_{k'}\in M$ such that $f=a_0\textbf{y}\vee\dots\vee a_{k'} \textbf{y}$. Notice that this implies that for every $f,f'\in \langle \YY(M)\rangle$ such that $f\vee f'\in F$ there are $c_0, \dots c_j\in M$ with $j< k$ such that $f'=c_0\textbf{y} \vee \dots \vee c_j\textbf{y}$. This follows from the property that the set of predecessors of any element of  $\YY(M)$ is linearly ordered by $\leq_{{\YY}(M)}$. 

Assume as induction hypothesis that the tuple obtained by concatenation $ s_{\restriction i}\conc{  \cev{u}}_{\restriction k}$ is $(F,\textbf{y}, c)$-controllable and $s_{\restriction i}$ is a  basic sequence of elements of $DP$.
Our goal is to find $s_i\in G$ such that the same properties hold for $s_{\restriction i+1}$.

   From the induction hypothesis it follows that
   $ s_{\restriction i}\conc{ \cev{u}_{\restriction l}}$ is $(F,\textbf{y}, c)$-controllable for any $l\in\omega$.
   In fact, let $w=b_0{s_{i_0}}\conc\dots\conc b_m {s_{i_m}}\conc b_{m+1}{u_{i_{m+1}}}\conc \dots\conc b_{n}{u_{i_{n}}} $ be such that $b_0{\textbf{y}}\vee\dots\vee b_m {\textbf{y}}\vee b_{m+1}{\textbf{y}}\vee \dots\vee b_{n}{\textbf{y}}\in F$.
   Let $j< k$ and $c_0,\dots,c_{j}\in M$ be such that $b_{m+1}\textbf{y}\vee \dots \vee b_{n} \textbf{y} = c_0\textbf{y}\vee\dots\vee c_{j} \textbf{y}$. 
   Since $\cev{u}$ is a coheir sequence, we have that $\cev{u}_{\restriction I_0}\equiv_G\cev{u}_{\restriction j+1}$ for any $I_0\subset l$ of size $j+1$. Hence,
   the type over $G$ of $w$ is equal to the type over $G$ of $b_0{s_{i_0}}\conc\dots\conc b_m {s_{i_m}}\conc c_{0}{u_{j}}\conc \dots \conc c_{j}{u_{0}}$. Therefore, we may use the induction hypothesis to conclude that $s_{\restriction i}\conc{ \cev{u}_{\restriction l}}$ is $(F,\textbf{y}, c)$-controllable.  Also, $s_{\restriction i}\conc{ \cev{u}_{\restriction l}}$ is a basic sequence by induction hypothesis and idempotence of $u$.

   Let $\phi(s_{\restriction i}, {  u_{i+1}}, {  u_{\restriction i+1}})$ 
   say that 
   $ s_{\restriction i}\conc{ \cev{u}_{\restriction i+2}}$ is $(F,\textbf{y}, c)$-controllable and that $s_{\restriction i}\conc { \cev{u}_{\restriction i+2}}$ is a   basic sequence of elements of $DP$.
   As ${ \bar u}$ is a coheir sequence we can find  $s_i\in G$ such that
   $\phi(s_{\restriction i+1}, {  u_{\restriction i+1}})$.
   Hence, 
   $s_i$
   has the desired properties.
\end{proof}

\section{Equivalent definitions of Ramsey monoid}\label{section:Equivalent definitions of Ramsey monoid}

In this section, we briefly prove the equivalence between different notions of being Ramsey. 

First, in Proposition~\ref{prop:transfer lemma for semigroups} we show that in this context results on located words are not stronger than results on words, but rather equivalent, since one can derive results about located words from results about words. While the converse is well-known, this implication apparently has been overlooked. 
For example, Bergelson-Blass-Hindman theorem on located words \cite{MR1262304} can be derived from Carlson's theorem on variable words \cite{MR926120}, since Carlson's Theorem implies condition \ref{def:Ramsey_W_M} in Proposition~\ref{prop:transfer lemma for semigroups}. Conditions \ref{def:Ramsey_W_M} and \ref{def:Ramsey_FIN_M} of Proposition~\ref{prop:transfer lemma for semigroups}  also show that Carlson's Theorem and Gowers' Theorem are indeed equivalent to the statement that a certain monoid is Ramsey. 

Secondly, in Corollary~\ref{cor:theorem maximum a_i} and Theorem~\ref{Milliken-Taylor} we state some equivalent definitions of Ramsey monoid which may be useful for applications.

First, let us recall some basic definitions. 
$(S,\cdot)$ is a
{partial semigroup} if $\cdot$ is a partial binary function $\cdot:S^2\rightarrow S$ such that $(s_1 \cdot s_2)\cdot s_3=s_1 \cdot (s_2\cdot s_3)$ whenever $(s_1 \cdot s_2)\cdot s_3$ and $s_1 \cdot (s_2\cdot s_3)$ are both defined. 
An endomorphism on a partial semigroup $S$ is a function $m : S \rightarrow S$, denoted by $s\mapsto ms$, such that for all
$s_1, s_2 \in S$ for which $s_1\cdot s_2$ is defined, then $ms_1 \cdot ms_2 $ is defined and $m(s_1\cdot s_2) = (ms_1) \cdot (ms_2 )$.

Given a partial semigroup $S$ and two sequences $\bar{s}$ and $\bar{t}$ in $S^\omega$, we say that $\bar{s}$ is \emph{extracted} from $\bar{t}$, or \emph{$\bar{s}\leq_M \bar{t}$}, if there is an increasing sequence $(i_n)_{n\in\omega}$ of natural numbers such that $s_n\in {\langle t_{i_n},\dots, t_{(i_{n+1})-1}\rangle}_M$. 
As for pointed $M$-sets, we say that $\bar{t}$ is \emph{basic} if $m_0 t_{i_0}\cdot{\dots}\cdot {m_n t_{i_n}}$ is defined for every $i_0<{\dots}<i_n$ and $m_0,{\dots },m_n\in M$. 

An infinite sequence $\bar{t}\in M^\omega$ is \emph{rapidly increasing} if $\displaystyle |t_n | > \Sigma_{i=0}^{n-1}|t_i |$ for all $n\in\omega$. 
Let $\FIN_M$ be the partial semigroup of located words, i.e. $\FIN_M=\langle (X_n)_{n\in\omega}\rangle$ where $X_n=\{n\}\times M$ with the usual action. As for words, a variable located word is a located word $w$ such that $(n,1_M)\in\ran{w}$ for some $n$.

\begin{proposition} \label{prop:transfer lemma for semigroups}
	The following are equivalent for a monoid $M$:
	\begin{enumerate}    
		\item $M$ is Ramsey. \label{def:Ramsey}
		\item For every partial semigroup $S$ on which $M$ acts by endomorphisms, for every basic sequence $\bar{t}\in S^\omega$, for every finite coloring of $S$ there is a sequence $\bar{s}\leq_M \bar{t}$ such that $\langle \bar{s} \rangle_M$ is monochromatic.\label{def:Ramsey_with_quadruple}
		\item There is a rapidly increasing sequence of variable words $\bar{x}\in {(M^{<\omega})}^\omega$ such that for all finite colorings of $M^{<\omega}$ there is a sequence $\bar{s}\leq_M \bar{x}$ with $\langle \bar{s} \rangle_M$ monochromatic.\label{def:Ramsey_W_M} 
		 \item For all finite colorings of $\FIN_M$ there is a sequence of variable located words with monochromatic $M$-span.\label{def:Ramsey_FIN_M}  
		\end{enumerate}
\end{proposition}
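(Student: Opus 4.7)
The plan is to prove the equivalences via the cyclic chain
\[
\ref{def:Ramsey}\Rightarrow\ref{def:Ramsey_with_quadruple}\Rightarrow\ref{def:Ramsey_W_M}\Rightarrow\ref{def:Ramsey_FIN_M}\Rightarrow\ref{def:Ramsey},
\]
each implication being a pullback (or pushforward) of finite colorings along a suitable $M$-equivariant map between partial semigroups.

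For \ref{def:Ramsey}$\Rightarrow$\ref{def:Ramsey_with_quadruple}, given a partial semigroup $S$ with $M$-action and a basic sequence $\bar{t}\in S^\omega$, I would set $X_n:=M t_n$ with distinguished point $t_n$; the partial product in $S$ then defines an $M$-equivariant map $\pi\colon\langle(X_n)_{n\in\omega}\rangle\to S$ sending $x_1\conc\dots\conc x_k$ to $x_1\cdot\dots\cdot x_k$, well-defined since $\bar{t}$ is basic. Pulling back a coloring of $S$ along $\pi$ and applying \ref{def:Ramsey} produces a basic sequence $\bar{s}$ with distinguished points and monochromatic $M$-span; its image under $\pi$ is the required extraction of $\bar{t}$. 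The implication \ref{def:Ramsey_with_quadruple}$\Rightarrow$\ref{def:Ramsey_W_M} will follow by taking $S=M^{<\omega}$ with coordinate-wise action and, as basic sequence, any rapidly increasing sequence of variable words (e.g.\ $x_n$ equal to a string of $1_M$'s of length $2^{n+1}$); extractions of variable words remain variable since every span element forces at least one $m_k=1_M$.

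For \ref{def:Ramsey_W_M}$\Rightarrow$\ref{def:Ramsey_FIN_M}, fix the rapidly increasing $\bar{x}$ furnished by \ref{def:Ramsey_W_M}. I will first check that every $w=m_0 x_{i_0}\conc\dots\conc m_n x_{i_n}\in\langle\bar{x}\rangle_M$ uniquely determines the tuple $(i_0,m_0),\dots,(i_n,m_n)$: the rapid-increase condition recovers the block decomposition from word lengths alone, and the variability of each $x_{i_k}$ then recovers $m_k$ from the letters of $m_k x_{i_k}$ at the positions where $x_{i_k}$ originally carries $1_M$. This yields a well-defined $M$-equivariant injection $\phi\colon\langle\bar{x}\rangle_M\to\FIN_M$, $w\mapsto(i_0,m_0)\conc\dots\conc(i_n,m_n)$. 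Given a finite coloring $c$ of $\FIN_M$, I would pull $c$ back through $\phi$ to $M^{<\omega}$ (coloring outside $\langle\bar{x}\rangle_M$ arbitrarily), apply \ref{def:Ramsey_W_M}, and push the resulting monochromatic span forward through $\phi$ to obtain a sequence of variable located words with monochromatic $M$-span. Finally, \ref{def:Ramsey_FIN_M}$\Rightarrow$\ref{def:Ramsey} will follow from the $M$-equivariant surjection $\FIN_M\to\langle(X_n)_{n\in\omega}\rangle$, $(n_1,m_1)\conc\dots\conc(n_k,m_k)\mapsto m_1 x_{n_1}\conc\dots\conc m_k x_{n_k}$, built from the fact that each pointed $M$-set $X_n$ is a quotient of $\{n\}\times M$; this map carries variable located words to sequences with distinguished points, so pulling back and pushing forward finishes the implication.

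The main obstacle is the well-definedness of $\phi$ in the step \ref{def:Ramsey_W_M}$\Rightarrow$\ref{def:Ramsey_FIN_M}: it is the single place where both the rapid-increase and the variable-word hypotheses are genuinely used, and it is essentially the mechanism by which located-word Ramsey statements (such as Bergelson--Blass--Hindman) are derivable from their unrestricted-word counterparts (such as Carlson's theorem).
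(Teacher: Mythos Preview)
Your proposal is correct and follows essentially the same approach as the paper: both proofs hinge on the unique decomposition of elements of $\langle\bar{x}\rangle_M$ (rapid increase recovers the block structure, variability recovers the acting $m_k$'s), and both transfer colorings along $M$-equivariant (partial) homomorphisms built from this decomposition. The only difference is organizational: the paper uses a hub-and-spoke scheme with \ref{def:Ramsey_with_quadruple} at the center (proving \ref{def:Ramsey}$\Leftrightarrow$\ref{def:Ramsey_with_quadruple}, then \ref{def:Ramsey_W_M}$\Rightarrow$\ref{def:Ramsey_with_quadruple} and \ref{def:Ramsey_FIN_M}$\Rightarrow$\ref{def:Ramsey_with_quadruple} via maps $\langle\bar{x}\rangle_M\to S$), whereas you close a cycle and route \ref{def:Ramsey_W_M}$\Rightarrow$\ref{def:Ramsey_FIN_M} through the specific target $\FIN_M$; the underlying mechanism is identical.

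One small point worth making explicit in your write-up: the domain $\langle\bar{x}\rangle_M$ is not $M$-invariant (applying $m\in M$ may destroy the condition that some $m_k=1_M$), so strictly speaking your map $\phi$ should be defined on the larger set $\{m_0x_{i_0}\conc\dots\conc m_nx_{i_n}:i_0<\dots<i_n,\ m_j\in M\}$, where the unique decomposition still holds for the same reasons. The paper has the same informality, so this is not a gap, just something to state cleanly.
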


\begin{proof}
    It is easy to check that points~\ref{def:Ramsey} and \ref{def:Ramsey_with_quadruple} are equivalent. Indeed, $\langle (X_n)_{n\in\omega}\rangle$ is a partial semigroup for every $(X_n)_{n\in\omega}$, hence \ref{def:Ramsey_with_quadruple} implies \ref{def:Ramsey}. Conversely, given a partial semigroup $S$ and a basic sequence $(t_n)_{n\in\omega}\in S^{\omega}$, we may obtain results about $S$ from $\langle (X_n)_{n\in\omega}\rangle$ choosing the sets $X_n=Mt_n\subset S$ with distinguished point $t_n$.

	It is also straightforward to check that \ref{def:Ramsey_with_quadruple} implies  \ref{def:Ramsey_W_M} and \ref{def:Ramsey_FIN_M}. Hence, it remains to prove that \ref{def:Ramsey_W_M} implies \ref{def:Ramsey_with_quadruple} and \ref{def:Ramsey_FIN_M} implies \ref{def:Ramsey_with_quadruple}.
	
	So let us show that \ref{def:Ramsey_W_M} implies \ref{def:Ramsey_with_quadruple}.
	Let $\bar{x}$ be the sequence given by \ref{def:Ramsey_W_M}, let $S$ be a partial semigroup and let $\bar{t}$ be a basic sequence in $S^\omega$. We write $\conc $ to denote the operation of $M^{<\omega}$ and $\bullet$ to denote the operation of $S$.
	
	By definition of rapidly increasing sequence and since each ${x_n}$ is a variable word, every element of $\langle \bar{x}\rangle_M$ can be written as ${{m}_{1}}x_{i_1} \conc {\dots} \conc {{m}_{n}}x_{i_n}$ in a unique way. Hence, there is a function $f:\langle  \bar{x}\rangle_M \rightarrow \langle \bar{t}\rangle_{M}$ defined as the surjective homomorphism of partial semigroups such that for every $n\in\omega$, ${m}_1\dots {m}_{n}\in M$, and $i_1< \dots <i_n$,
	\[f({{m}_{1}}x_{i_1} \conc {\dots} \conc {{m}_{n}}x_{i_n})={{m}_1}t_{i_1}\bullet{\dots}\bullet {{m}_n}t_{i_n}.\]

	Let $\{B_i : i<n\}$ be a coloring of $S$ into finitely many pieces. Define $A_i=f^{-1}[{B_i}]$, then $\{A_i : i<n\}\cup \{M^{<\omega}\setminus \langle \bar{x}\rangle_M\}$ is a finite coloring of $M^{<\omega}$. By \ref{def:Ramsey_W_M} we may find $\bar{y}\leq_M \bar{x}$ such that $\langle \bar{y}\rangle_ M$ is monochromatic. Notice that $\langle \bar{y}\rangle_ M\subset \langle \bar{x}\rangle_ M$, so there is $k<n$ such that
	$\langle \bar{y}\rangle_ M\subset A_k$.
	
	Set $f(\bar{y})=(f(y_i))_{i\in\omega}$.  Then, $f(\bar{y})\leq_{ M }f(\bar{x})=\bar{t}$. It is enough to prove that $\langle f(\bar{y})\rangle_ { M }= f[\langle \bar{y}\rangle_ { M}]$ and then we are done, since $f[\langle \bar{y}\rangle_ { M}]\subset f[A_k]= B_k$. 
	
	Notice that ${m} f( y_j)=f({m} y_j)$ for all $m\in M$. In fact, if $y_j={{m}_1}x_{i_1}\conc{\dots}\conc {{m}_n}x_{i_n}$, then \[{m} f(y_j)={m}({m_1}t_{i_1})\bullet{\dots}\bullet  {m}({m_k}t_{i_n})=f({m}y_j).\] 
	
	Let $g\in \langle f(\bar{y})\rangle_ { M }$, say \[g={{m}_1}f(y_{i_1})\bullet{\dots}\bullet {{m}_n}f(y_{i_n})=f({{m}_1}y_{i_1})\bullet{\dots}\bullet f({{m}_n}y_{i_n}).\] Then,  $g=f({{m}_1}y_{i_1}\conc{\dots}\conc {{m}_n}y_{i_n})$.
	
	The proof of \ref{def:Ramsey_FIN_M} implies \ref{def:Ramsey_with_quadruple} proceeds in a similar manner, starting from the sequence $\bar{x}=((n,1_M))_{n\in\omega}$ from which every sequence of variable located words can be extracted.
\end{proof}

Notice that in the proof of Proposition~\ref{prop:transfer lemma for semigroups}, with the same notation and assumptions, we showed that the color of $g={{m}_1}f(y_{i_1})\bullet{\dots}\bullet {{m}_n}f(y_{i_n})$ is controlled by the color of ${{m}_1}y_{i_1}\cdot{\dots}\cdot {{m}_n}y_{i_n}$. 
Proposition~\ref{prop:transfer lemma for semigroups} can be extended to $\YY$-controllable monoids, providing three equivalent definitions for this notion as well.

In the definition of $M$-span, we ask that at least one element of the basic sequence is moved by 1. 
Here, we show how to relax this condition.

\begin{corollary}\label{cor:theorem maximum a_i}
Let $M$ be a finite Ramsey monoid. Then, for any partial semigroup $S$, for any finite coloring of $S$ and for every sequence $\bar{t}\in S^\omega$ there is $\bar{s}\leq_M \bar{t}$ such that for every $a\in M$ the set  \[\big\{m_{0}\,{s_{i_0}}\cdots m_{n}\,{s_{i_n}}
:  
i_0<\dots<i_n, m_i\in aM, m_i\Rbin a \mbox{ for at least one } i
\big\}\]  is monochromatic.
\end{corollary}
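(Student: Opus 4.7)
The strategy is to deduce the corollary from the fact that every finite Ramsey monoid is $\YY$-controllable. Since $M$ is Ramsey, by Theorem~\ref{thm:main theorem for Ramsey} it is aperiodic and $\XX(M)$ is linear, and by Corollary~\ref{cor:equivalence_Ramsey_and_YM-controllable+XM-linear} it is $\YY$-controllable. As $\XX(M) \in \YY(M)$ is a maximal element, I would set $\mathbf{y} := \XX(M)$ and $F := \{a\mathbf{y} : a \in M\}$; note $F$ is finite since $M$ is.

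Next I would invoke the extension of Proposition~\ref{prop:transfer lemma for semigroups} to $\YY$-controllable monoids, asserted in the paragraph immediately following that proposition. This supplies the partial-semigroup / extraction version of $\YY$-controllability: given the partial semigroup $S$, the sequence $\bar{t}$, the coloring, and the chosen $\mathbf{y}$ and $F$, we can extract $\bar{s} \leq_M \bar{t}$ such that whenever $a_0\mathbf{y} \vee \dots \vee a_n\mathbf{y} = b_0\mathbf{y} \vee \dots \vee b_m\mathbf{y} \in F$, the products $a_0 s_{i_0} \cdots a_n s_{i_n}$ and $b_0 s_{j_0} \cdots b_m s_{j_m}$ share the same color for all increasing index tuples.

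The heart of the argument is then a translation between the combinatorial condition in the statement and the algebraic one on the joins in $\langle \YY(M)\rangle$. For $m \in M$, one checks that $m\mathbf{y} \leq_{\YY(M)} a\mathbf{y}$ iff $mM \subseteq aM$, using linearity of $\XX(M)$ to verify the order condition on the elements of $a\mathbf{y}\setminus m\mathbf{y}$; and $m\mathbf{y} = a\mathbf{y}$ iff $mM = aM$, i.e.\ $m \Rbin a$. Since by linearity the $m_l\mathbf{y}$'s are pairwise comparable, their $\vee$-join is the $\leq_{\YY(M)}$-maximum, and $\bigvee_l m_l\mathbf{y} = a\mathbf{y}$ precisely when $m_l \in aM$ for every $l$ and $m_l \Rbin a$ for at least one $l$. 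Consequently, for each $a \in M$ the set displayed in the corollary is exactly the collection of products whose associated join lies in $F$ and equals $a\mathbf{y}$, and the $\YY$-controllability condition forces these to share a common color; since $F$ collects the joins for every $a \in M$, the same extracted sequence $\bar{s}$ works for all $a$ simultaneously.

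The main obstacle I expect is the careful verification that Proposition~\ref{prop:transfer lemma for semigroups} really admits the claimed extension to the $\YY$-controllable setting, producing an actual extraction $\bar{s}\leq_M \bar{t}$ rather than just an abstract basic sequence with distinguished points. The adaptation should follow the same pattern as the original transfer lemma --- lifting the $M$-span monochromaticity argument from $\FIN_M$ (or $W_X$) to a general partial semigroup via an $M$-equivariant surjection --- but one must carry along the auxiliary equalities $a_0\mathbf{y}\vee\dots = b_0\mathbf{y}\vee\dots$ through the construction.
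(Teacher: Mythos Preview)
Your proposal is correct and matches the paper's own proof essentially line for line: the paper also invokes Corollary~\ref{cor:equivalence_Ramsey_and_YM-controllable+XM-linear} to get $\YY$-controllability, takes $\mathbf{y}=\XX(M)$ and $F=\{a\mathbf{y}:a\in M\}$, and then appeals to the arguments of Proposition~\ref{prop:transfer lemma for semigroups} for the transfer to partial semigroups. Your spelled-out verification that $\bigvee_l m_l\mathbf{y}=a\mathbf{y}$ holds exactly when each $m_l\in aM$ and some $m_l\Rbin a$ is more detailed than what the paper writes but is precisely the computation the paper leaves implicit (compare the analogous calculation in the proof of Fact~\ref{fact if X(M) is linear and conclusions of weakly Ramsey then M is Ramsey}).
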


\begin{proof}
By Corollary~\ref{cor:equivalence_Ramsey_and_YM-controllable+XM-linear}, $M$ is $\YY$-controllable. Then, the thesis follows from the definition of $\YY$-controllable monoid applied to the maximal element $\textbf{y}=\XX(M)$ and to $F=\{a\textbf{y}: a\in M\}$, and the arguments of Proposition~\ref{prop:transfer lemma for semigroups}.
\end{proof}

In previous corollary, the action of $M$ can be controlled with $|\XX(M)|$-many colors. This is optimal, as in general it is not possible to get less than $|\XX(M)|$-many monochromatic sets. For example, choose $\XX(M)$ as set of colors, and color each word $w\in M^{<\omega}$ by the minimum $aM$ such that $\ran(w)\subset aM$: then, if $\bar{t}$ is a sequence of variable words, for any $\bar{s}\leq_M\bar{t}$ each set defined above has a different color.

When instead $M$ is $\YY$-controllable but $\XX(M)$ is not linear, it is not difficult to see that for any  $k\in\omega$ there are $\textbf{y}$ and $F\subset \YY(M)$ and colorings of, say, $M^{<\omega}$ such that for every sequence of variable words $\bar{s}$ there are more than $k$-many $f\in F$ such that the sets 
\[\langle \bar{s}\rangle_f=\{ a_0s_{i_0}\cdot {\dots} \cdot a_ns_{i_m} : a_i\in M, i_0<\dots<i_{m}, a_0\textbf{y}\vee \dots \vee a_n\textbf{y}=f\}
\] 
have different colors.

The next theorem is a generalization of both Theorem~\ref{main 1} and Milliken-Taylor theorem \cite{MR373906}, \cite{MR424571}. It is a combination of Ramsey's theorem 
and Theorem~\ref{main 1}, in the same way as Milliken-Taylor theorem is a combination of Ramsey's theorem and Hindman's theorem. For a sequence $\bar{s}\in S^\omega$ let $\bar{s}^{(n)}$ be the collection of $n$-subsets of $\{a\in S: a=s_i\mbox{ for some }i\in\omega\}$. Notice that for $n=1$ the following is the content of Theorem~\ref{main 1}.

\begin{theorem}\label{Milliken-Taylor}
	Let $M$ be a finite Ramsey monoid. 
	Then, for any $n\geq 1$, for all sequences of pointed $M$-sets $(X_n)_{n\in\omega}$ on which $M$ acts uniformly, for any finite coloring of $n$-subsets of $\langle (X_n)_{n\in\omega} \rangle$ there is a basic sequence $\bar{s}\in (\langle (X_n)_{n\in\omega} \rangle)^{\omega}$ such that $s_n$ has a distinguished point for every $n\in\omega$ and such that $\displaystyle \bigcup_{\bar{r}\leq_M\bar{s}}{\bar{r}}^{(n)}$ is monochromatic.
\end{theorem}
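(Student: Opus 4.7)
I would prove the theorem by induction on $n$, adapting the classical Milliken--Taylor strategy with Theorem~\ref{main 1} playing the role of Hindman's theorem. Working inside partial semigroups is convenient, so I would use the equivalent formulation given by Proposition~\ref{prop:transfer lemma for semigroups}(\ref{def:Ramsey_with_quadruple}). The base case $n=1$ is immediate: a finite coloring of $1$-subsets of $\langle(X_n)_{n\in\omega}\rangle$ is just a coloring of elements, and the Ramsey property of $M$ produces a basic sequence $\bar{s}$ with distinguished points such that $\langle\bar{s}\rangle_M=\bigcup_{\bar{r}\leq_M\bar{s}}\bar{r}^{(1)}$ is monochromatic.

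For the inductive step, I assume the theorem for $n$ and fix a finite coloring $c$ of the $(n+1)$-subsets of $\langle(X_n)_{n\in\omega}\rangle$. I would construct the desired $\bar{s}$ together with auxiliary basic sequences $\bar{t}^{(k)}$ (all with distinguished points) stage by stage. Start from any $\bar{t}^{(0)}$ of the required form. At stage $k$, set $s_k:=t^{(k)}_0$ and write $\bar{u}=(t^{(k)}_{j+1})_{j\in\omega}$. Because $M$ is finite, there are only finitely many $n$-subsets $F$ of $\langle s_0,\dots,s_k\rangle_M$, so iterating Proposition~\ref{prop:transfer lemma for semigroups}(\ref{def:Ramsey_with_quadruple}) once per $F$ applied to the coloring $c_F(w):=c(F\cup\{w\})$ on the partial semigroup $\langle\bar{u}\rangle_M$ refines $\bar{u}$ to a basic subsequence $\bar{t}^{(k+1)}$ with distinguished points such that each $c_F$ is constant on $\langle\bar{t}^{(k+1)}\rangle_M$.

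The resulting $\bar{s}$ has the key property that for every $k$, every $n$-subset $F\subseteq\langle s_0,\dots,s_k\rangle_M$, and every $w\in\langle s_i:i>k\rangle_M$, the color $c(F\cup\{w\})$ depends only on $F$; denote this value by $c'(F)$. This defines a finite coloring $c'$ on those $n$-subsets of $\langle\bar{s}\rangle_M$ whose elements use pairwise disjoint, increasing intervals of $\bar{s}$ (take $k$ as the maximum $\bar{s}$-index occurring in $F$). I would then view $\bar{s}$ as giving a system of pointed $M$-sets $X'_k:=Ms_k\subseteq\langle(X_n)\rangle$, with distinguished point $s_k$ and inherited uniform $M$-action, and apply the inductive hypothesis for $n$ to $c'$ on $\langle(X'_k)_{k\in\omega}\rangle$ (extended arbitrarily on subsets with overlapping $\bar{s}$-intervals). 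This yields a basic sequence $\bar{s}'\leq_M\bar{s}$ with distinguished points such that $\bigcup_{\bar{r}\leq_M\bar{s}'}\bar{r}^{(n)}$ is $c'$-monochromatic. For any such $\bar{r}$ and any $(n+1)$-subset $\{w_0,\dots,w_n\}\in\bar{r}^{(n+1)}$, ordered so the $w_l$'s use disjoint increasing $\bar{s}$-intervals, letting $k$ be the largest $\bar{s}$-index used by $\{w_0,\dots,w_{n-1}\}$ makes $c(\{w_0,\dots,w_n\})=c'(\{w_0,\dots,w_{n-1}\})$, which is constant over all choices, completing the induction.

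The main obstacle I foresee is the interval bookkeeping: each element of $\bar{r}\leq_M\bar{s}$ can span several consecutive $s_i$'s, so in isolating the ``last'' element of an $(n+1)$-subset relative to a stage one must choose $k$ as the maximum $\bar{s}$-index used by the first $n$ elements, so that the remaining element lies in $\langle s_i:i>k\rangle_M$. Verifying that the iterative refinement really guarantees $c_F$ constant on the whole future span $\langle s_i:i>k\rangle_M$ — which holds because every $s_i$ with $i>k$ belongs to $\bar{t}^{(k+1)}$ — requires care but follows from the construction. One also needs to confirm that extending $c'$ arbitrarily on ``bad'' $n$-subsets does not harm the argument, which is immediate because any $\bar{r}\leq_M\bar{s}'$ has elements living in pairwise disjoint $\bar{s}$-intervals, so only the well-defined values of $c'$ are tested.
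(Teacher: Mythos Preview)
Your argument is correct: the classical Milliken--Taylor induction on $n$, with the Ramsey property of $M$ standing in for Hindman's theorem, goes through exactly as you describe, and the bookkeeping points you flag (that $\langle s_0,\dots,s_k\rangle_M$ is finite since $M$ is finite, that each later $s_i$ lies in $\langle\bar t^{(k+1)}\rangle_M$, that $\bar s$ is basic with distinguished points, and that only the well-defined values of $c'$ are ever tested) all check out.

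The paper's proof is genuinely different. It does not use induction on $n$ at all. Instead it reuses the model-theoretic setup from the proof of Theorem~\ref{thm: condition implies YY(M) controllable}: one takes the idempotent type $u=g(\XX(M))$ and a coheir sequence $\cev u$ of it, observes that every element of the span $\langle\cev u_{\restriction i}\rangle_M$ realizes $u$, and then shows that any $\cev h\leq_M\cev u$ satisfies $\cev h\equiv_G\cev u$. Hence $\cev u$ (and every extracted subsequence) is indiscernible over $G$, so all its $n$-subsets receive the same color, for every $n$ simultaneously. The finite sequence $\bar s$ is then read off exactly as in the proof of Theorem~\ref{thm: condition implies YY(M) controllable}. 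Your approach has the virtue of being self-contained and purely combinatorial, avoiding the type-space machinery; the paper's approach is shorter once that machinery is in place and yields the result for all $n$ in one stroke via indiscernibility rather than an inductive stabilization.
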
	
\begin{proof}
 The proof goes as in Theorem~\ref{thm: condition implies YY(M) controllable}, in section \ref{section:proof of main theorem with model theory}. Let  $G$ and $u=g(\textbf{y})$ be defined as in  Theorem~\ref{thm: condition implies YY(M) controllable}, with  $\textbf{y}=\XX(M)$, and let $(u_n)_{n\in\omega}$ be a coheir sequence of $u$. It is straightforward to check that all elements of the span of $  \cev{u}_{\restriction i}$ satisfy the type $u$ for every $i\in\omega$. Also, notice that with signature $L^+$, for every $a,a',b\in \G^{<\omega}$
    	if $a\equiv_G a'$,  $a'\cnonfork_G b$, and $a\cnonfork_G b$, then $a\equiv_{Mb}a'$.
    	Then, for any $\cev{h}\leq_M \cev{u}$ we have $\cev{h}\equiv_G \cev{u}$, by the remark above and the definition of coheir sequence.
    	All the $n$-subsets of an indiscernible sequence have the same color, for any $n\in\omega$. The rest of the proof is the same as in Theorem~\ref{thm: condition implies YY(M) controllable}.
\end{proof}
 
 The same arguments of  Proposition~\ref{prop:transfer lemma for semigroups} allow to extend this result to any partial semigroup.

It can be easily seen that if a monoid satisfies the conclusions of Corollary~\ref{cor:theorem maximum a_i} or the conclusions of Theorem~\ref{Milliken-Taylor}, then it is Ramsey.  Conversely, Corollary~\ref{cor:theorem maximum a_i} and Theorem~\ref{Milliken-Taylor} hold for all finite Ramsey monoids. Hence, their conclusions hold for a finite monoid if and only if it is Ramsey.

\section{Final remarks and open problems}\label{section: open questions}

 We conclude with some open questions and remarks concerning the work done so far.

Our main theorems suggest a possible connection between Ramsey theory and automata theory, passing through Schützenberger's Theorem. Any result in that direction would be of the highest interest.
	
Limiting ourselves to Ramsey theory, there are still several challenging open questions in the context of monoid actions on semigroups. 

One of the most immediate questions is what can be proven for infinite monoids. In a fore-coming paper, the authors prove that there are not infinite Ramsey monoids, and thus a monoid is Ramsey if and only if it is finite, aperiodic and has linear $\XX(M)$.

	 Theorem~\ref{main 2} provides a sufficient condition for a monoid to be $\YY$-controllable. 
	This condition is not necessary, as there are $\YY$-controllable monoids for which $\XX_\R(M)$ is not linear.
	
	\begin{proposition}\label{example YY controllable monoid not linear}
     Let $M$ be a finite aperiodic monoid such that  for every distinct $a,b\in M$ with $a\Rbin b$, we have $a^2=a$ and  $ax=bx$ for every $x\in M\setminus\{1\}$. Then, $M$ is $\YY$-controllable. 
    \end{proposition}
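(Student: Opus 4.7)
The plan is to show that $M$ is a good monoid, which by Theorem~\ref{thm: condition implies YY(M) controllable} suffices for $\YY$-controllability. By Lemma~\ref{step for theorem 4.4}, I need only exhibit, for every continuous endomorphism action of $M$ on a compact right topological semigroup $U$, a minimal idempotent $u^*\in E(U)\cap I(U)$ with $a(u^*)=b(u^*)$ for every pair $a\Rbin b$ in $M$. If $M$ has no non-trivial $\R$-class the coequalization requirement is vacuous and Proposition~\ref{facts idempotents} immediately supplies the required $u^*$; so I may assume that $M$ has at least one non-trivial $\R$-class, and fix an idempotent $m\in M\setminus\{1\}$ lying in such a class (which exists since, under the hypothesis, every element of a non-trivial $\R$-class is idempotent).

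The crucial point is that the hypothesis yields $am=bm$ in $M$ for every pair $a\Rbin b$ with $a\neq b$ (as $m\neq 1$). Consequently, for any $u\in m(U)$, writing $u=m(v)$, the computation
\[a(u)=(am)(v)=(bm)(v)=b(u)\]
shows that every element of $m(U)$ satisfies the coequalization condition for every pair $a\Rbin b$, regardless of which non-trivial $\R$-class they occupy. Hence it is enough to locate $u^*$ inside $m(U)\cap E(U)\cap I(U)$.

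Following the chain-of-idempotents strategy from the proof of Theorem~\ref{thm:get idempotent for main theorem}, but specialized to the two-step chain $m(U)\subseteq 1\cdot U=U$, I would first apply Proposition~\ref{facts idempotents} to the compact subsemigroup $m(U)$ to obtain a $\leq_{m(U)}$-minimal idempotent $u_0\in E(m(U))$, then apply Proposition~\ref{facts idempotents}(2) inside $E(U)$ to produce a $\leq_U$-minimal idempotent $u^*\in E(U)$ with $u^*\leq_U u_0$; by Proposition~\ref{facts idempotents}(3) this $u^*$ automatically lies in $I(U)$. For pairs $a,b\in[m]_\R$ the argument of Theorem~\ref{thm:get idempotent for main theorem} closes immediately: the identity $bm=m$ (which holds since $bx=mx$ for $x=m\neq 1$) gives $b(u_0)=u_0$, hence $b(u^*)\leq_U u_0$, and since $b(u^*)\in b(U)=m(U)$ is idempotent, minimality of $u_0$ in $E(m(U))$ forces $b(u^*)=u_0=a(u^*)$.

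The main obstacle will be handling pairs $a,b$ in a non-trivial $\R$-class $C\neq[m]_\R$: here the hypothesis still delivers $am=bm$ (as $m\neq 1$), but $m$ is no longer fixed by elements of $C$, so the Theorem~\ref{thm:get idempotent for main theorem} argument does not apply verbatim. My plan is to descend to the compact subsemigroup $zUz$, where $z=a(u_0)=b(u_0)$ is a common idempotent (coincidence by $am=bm$), observe that both $a(u^*)$ and $b(u^*)$ lie in $zUz$ and are images of a minimal idempotent of $U$ under the continuous endomorphisms $a,b$, and then exploit the Rees matrix description of the minimum ideal of $zUz$ together with the identity $am=bm$ and the relations $ab=b$, $ba=a$ (forced by the hypothesis inside $C$) to force $a(u^*)=b(u^*)$.
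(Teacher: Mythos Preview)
Your plan aims at something strictly stronger than what the paper proves: you try to show that $M$ is \emph{good}, i.e.\ that the coequalizing idempotent exists in \emph{every} compact right topological semigroup $U$ carrying an $M$-action, whereas the paper only produces such an idempotent in the one specific $U$ needed for $\YY$-controllability (the type space over $M^{<\omega}$). Your treatment of the chosen class $[m]_\R$ is correct, but the last paragraph is a genuine gap. For a pair $a,b$ in a different non-trivial class $C$ you only know that $a(u^*)$ and $b(u^*)$ are idempotents of $a(U)$ lying below the common idempotent $z=a(u_0)=b(u_0)$; since $z$, unlike $u_0$, carries no minimality property in $E(a(U))$, there is nothing to squeeze against, and the relations $ab=b$, $ba=a$, $am=bm$ merely say that both $a(u^*)$ and $b(u^*)$ are fixed by $a$ and by $b$---which does not make them equal. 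The Table~\ref{tab:example_monoid_2} monoid happens to satisfy $ca=c$ and $db=d$, which lets one write $c(u^*)=c(a(u^*))=c(u_0)=d(u_0)=d(b(u^*))=d(u^*)$; but the hypothesis of the proposition does not force such right-identity relations between distinct non-trivial $\R$-classes (take for instance $\{1,a,b,c,d,e\}$ with $\{a,b\}$, $\{c,d\}$ right-zero bands and all mixed products equal to a zero $e$: here $cn=e\neq c$ for every $n\in[m]_\R$), so your transfer from $u_0$ to $u^*$ breaks down in general. The Rees-matrix sketch does not repair this.

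The paper avoids the issue by \emph{not} proving goodness of $M$. It works directly with $G=M^{<\omega}$ and exploits a second action: the Carlson monoid $\tilde M=(M,*)$, where $x*y=y$ for $x,y\neq 1$, also acts coordinatewise on $M^{<\omega}$. Since $\tilde M$ is Ramsey and all of $M\setminus\{1\}$ forms a single $\R$-class in $\tilde M$, one obtains (in the type space over $G$, with both actions named in the language) an idempotent $u$ satisfying $\tilde a(u)=\tilde b(u)$ for all $a,b\neq 1$. The hypothesis on $M$ then yields, letter by letter on words, the identities $a\,\tilde a\,w=aw$ (from $a^2=a$) and $a\,\tilde b\,w=bw$ (from $ab=b$ and $ax=bx$ for $x\neq 1$); hence $a(u)=b(u)$ whenever $a\Rbin b$ in $M$, for \emph{all} non-trivial classes simultaneously. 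One then concludes as in the proof of Theorem~\ref{thm: condition implies YY(M) controllable}. The auxiliary $\tilde M$-action is available on $M^{<\omega}$ but not on an arbitrary $U$, which is exactly why the paper's route succeeds where the attempt to establish goodness stalls.
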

    
    \begin{proof}
     To show that $M$ is $\YY$-controllable is enough to work with $M^{<\omega}$, by the arguments of Proposition~\ref{prop:transfer lemma for semigroups}.
     
    Consider the monoid $\tilde{M}=(M,*)$ where $x*y=y$ for all $x,y\neq 1$. It acts coordinate-wise  on $M^{<\omega}$, considered as $\tilde{M}^{<\omega}$. 
    
     Let $G$ be $M^{<\omega}$ with the signature $L^+$ used in the proof of Theorem \ref{thm: condition implies YY(M) controllable}, plus an unary function $\tilde{a}$ for any $a\in M$, which is interpreted in $G$ as the action of $\tilde{M}$. Since $\tilde{M}$ is Ramsey and since every element in $\tilde{M}$ different from $1$ is in the same $\R$-class, one can find an idempotent $u$ in the space of types $S(G)$ such that $\tilde{a}u=\tilde{b}u$ for every $a,b \neq 1$. Let $v$ be an element of the monster model satisfying $u$. Then, if $a\Rbin b$, we have \[av=a\tilde{a}v\equiv_G a\tilde{b}v=bv,\]
where we use the fact that for every $x\in M^{<\omega}$, and hence for every $x$ in the monster model, we have $a\tilde{a}x=ax$ and $a\tilde{b}x=bx$, by hypothesis. Hence, we can conclude that $M$ is $\YY$-controllable, by the arguments of Theorem \ref{thm: condition implies YY(M) controllable}. 	  
    \end{proof}

	An example of a monoid satisfying the hypothesis of Proposition \ref{example YY controllable monoid not linear} for which $\XX_\R(M)$ is not linear is given by the following Cayley table.

    \begin{table}[h]
        \centering
        \begin{tabular}{c|c c c c }
             {1} & $a$ & $b$ & $c$ & $d$  \\
             \hline
             $a$ & $a$ & $b$ & $a$ & $b$ \\
             $b$ & $a$ & $b$ & $a$ & $b$  \\
             $c$ & $c$ & $d$ & $c$ & $d$  \\
             $d$ & $c$ & $d$ & $c$ & $d$ 
             \end{tabular}
        \caption{Example of (aperiodic) $\YY$-controllable monoid $M$ such that  $\XX_\R(M)$ is not linear.}
        \label{tab:example_monoid_2}
    \end{table}

		On the other hand, it seems possible that the necessary condition of Theorem \ref{main 2} is also sufficient, and that a finite monoid is aperiodic if and only if it is $Y$-controllable.
		If true, this would suggest an even stronger connection between Ramsey theory and Schützenberger's Theorem.
	
	\begin{OP}
	 Find an algebraic characterization of $\YY$-controllable monoids.
	\end{OP}

		  If $M$ is a Ramsey monoid, then \textit{for every action of $M$ on every partial semigroup} you have a monochromatic set as described in the definition.
		    Lupini's in \cite{MR3620731} gave examples of non-Ramsey monoids where the same statement holds for \textit{certain actions on certain partial semigroups} (actually, he proved a stronger statement that can be seen as the analogue of Corollary~\ref{cor:theorem maximum a_i}).

			Define {$I_k$} to be the set of functions $f$ from $k$ to $k$ such that $f(0)= 0$ and such that if 
			$f({i})= j$ then either $f({i+1})= j$ or $f({i+1})= {j+1}$. Then, {$I_k$} is a monoid with composition of functions as operation, and {$k$} is an $I_k$-set with distinguished point $k-1$, where the action is defined by $fi=f(i)$. 
			This action induces a coordinate-wise action on {$\FIN_k=\langle (\{n\}\times k)_{n\in\omega}\rangle$} (i.e. the set of all partial functions with finite domain from $\NN$ to $k$). 
			Lupini in \cite{MR3620731} showed that {for every $k\in\omega$ and} for every finite coloring of {$\FIN_k$} there is an {infinite} sequence of words in {$\FIN_k$}  each containing $k-1$ such that its {$I_k$}-span is monochromatic. Notice that this result implies that every $\R$-trivial monoid is Ramsey.
			In fact, let $N$ be a $\R$-trivial monoid with linear $\XX(N)$. Without loss of generality, we may assume that $N=\{0,\dots, k-1\}$ and that $0N\subseteq \dots \subseteq (k-1) N$ is an increasing enumeration of $\XX(N)$. 
			Then, the coordinate-wise action of $N$ on $\FIN_k$ coincides with the action of a submonoid of $I_k$, by Proposition~\ref{prop:Green_relations_for_XM_linear}, and Lupini's theorem implies point~\ref{def:Ramsey_FIN_M} of Proposition~\ref{prop:transfer lemma for semigroups}.
			
			All monoids $I_k$ are $\R$-trivial, but $\XX(I_k)$ is linear if and only if $k\leq 3$ (see \cite[Section 4 .4]{MR3904213}). In particular, if $k>3$ these monoids are not Ramsey, and Lupini's result does not follow from the theory of Ramsey monoids. It would be interesting to see if a similar statement holds for other (non-Ramsey) monoids.

						\begin{OP}
			Classify the couples $(M,k)$ such that $k\in \omega$ is a pointed $M$-set and for every finite coloring of $\FIN_k$ there is a basic sequence $\bar{s}$ in $\FIN_k$  such that $s_n$ has a distinguished point for every $n\in\omega$ and
			such that the $M$-span of $\bar{s}$ is monochromatic.
			\end{OP}

			In the same direction, the following seems a challenging problem. 
			
			\begin{OP}
				Characterize the class of triples $(S,M, \bar{t})$, where $S$ is a partial semigroup, $M$ is a monoid acting on $S$ by endomorphisms and $\bar{t}$ is a basic sequence in $S$, for which for every finite coloring of $S$ there is a sequence $\bar{s}\leq_M\bar{t}$ in $S$ such that its $M$-span is monochromatic.
		\end{OP} 
	 
 One can check that every finite Ramsey monoid generates examples of Ramsey spaces. However, an even nicer property might be true: there are topological Ramsey spaces that induce a collection of projected spaces such that every metrically Baire set has the Ramsey property. A sufficient condition for the latter has been found by Dobrinen and Mijares in \cite{MR3366475}. 	An example of a space of this form is Carlson-Simpson space, see \cite{MR753869} and \cite[section 5.6]{MR2603812}). See also \cite[section 4]{MR3366475} for generalizations of the latter.

	\begin{OP}
	Which topological Ramsey spaces given by finite Ramsey monoids meet the sufficient conditions given in \cite{MR3366475}?
	\end{OP}

 Hales-Jewett theorem \cite{MR143712} is a corollary of Corollary~\ref{cor:theorem maximum a_i} for the special case of monoids $M$ such that $ab=b$ for every $a,b \in M\setminus\{1\}$. In Ramsey theory, two of the strongest known results are a polynomial generalization \cite{MR1715320} and a density generalization \cite{MR1191743} of Hales-Jewett theorem for these monoids.

 \begin{OP}
 Do polynomial or density results hold for other finite Ramsey monoids? 
 \end{OP}

Ojeda-Aristizabal in \cite{MR3638339}  obtained upper bounds for the finite version of Gowers' $\FIN_k$ theorem, giving a constructive proof.
It would be interesting to know if these upper bounds hold for other Ramsey monoids.

The work of Gowers on $\FIN_k$ and the related space $\FIN_{\pm k}$ was the key to his solution of an old problem in Banach spaces \cite{MR1164759}.
Also, the aforementioned example of Bartošova and Kwiatkowska found applications in metric spaces. Finally, a discussion about the connection between Ramsey spaces and Banach spaces can be found in Todorcevic's monograph. In this paper, we found new Ramsey monoids, and consequently new Ramsey spaces. Hence, it might be possible to find applications of these new results to metric spaces.

	 Recently various papers have found different common generalizations of Carlson's and Gowers' theorems, see  \cite{barrett2020ramsey}, \cite{kawach2020parametrized}, \cite{MR3896106}. Of particular interest is the context of adequate layered semigroups,  introduced by Farah, Hindman, and McLeod \cite{MR1899627} and recently studied by Lupini  \cite{MR3896106} and Barrett  \cite{barrett2020ramsey}. 
	 	 Barrett's paper \cite{barrett2020ramsey} describes a framework which seems well suited for a connection between Ramsey monoids and layered semigroups. 
	 His work and ours are independent from each other and were written concurrently, so we did not investigate this research line. 
	 Nevertheless, in Example~\ref{example Barret} and in the following paragraph  we show a possible connection.

 \begin{example}\label{example Barret} 
	 Let $M$ be a monoid with linear $\XX(M)$, and let $a_0M \subseteq \dots \subseteq a_nM $ be an increasing enumeration of $\XX(M)$. 
	 Define $\ell:\FIN_M\to n+1$ by \[\ell(w)=\min\{i: \ran(w)\subset a_iM \}.\]
	 Then, $(\FIN_M,\ell)$ is an adequate partial layered semigroup as defined in \cite[Definition~3.7]{barrett2020ramsey}.
	 Furthermore, the canonical action $\mathcal{F}_{\cw}$ of $M$ on $\FIN_M$ is made of regressive maps, by Proposition~\ref{prop:Green_relations_for_XM_linear}. 
	 \end{example}
	 
	 This example shows that every monoid with linear $\XX(M)$ generates an adequate partial layered semigroup, $\FIN_M$, and a family of regressive functions $\mathcal{F}_{\cw}$. 
	 On the other hand, every family of regressive functions $\mathcal{F}$ on an adequate partial layered semigroup generates a monoid $M_\mathcal{F}$ with composition, acting on $S$ by endomorphisms.

%    Good game!

      \begin{center}
    \Large{\textbf{Acknowledgements}}
\end{center}
We would like to thank our supervisors Luca Motto Ros and Domenico Zambella for the help given, and Turin logic group, which provides a good environment for our formation. The second author would like to thank the Jerusalem logic group, which was very welcoming last year. Also, we would like to thank Natasha Dobrinen for interesting conversations around topological Ramsey spaces. We thank the anonymous referee for valuable suggestions on the structure of this presentation.

    \bibliographystyle{abbrv}
    \bibliography{Bibliography}{}

Universit\`a degli Studi di Torino,
Dipartimento di Matematica ``G. Peano'',
Via Carlo Alberto 10, 10123 Torino, Italy.\\
\textit{Email address:}  claudio.agostini@unito.it

Universit\`a degli Studi di Torino,
Dipartimento di Matematica ``G. Peano'',
Via Carlo Alberto 10, 10123 Torino, Italy.\\
\textit{Email address:} eugenio.colla@unito.it

\end{document}